\documentclass[a4paper]{amsart}
\usepackage[utf8]{inputenc}

\usepackage{amssymb}
\usepackage{float}
\usepackage{amsmath}
\usepackage{amsthm}
\usepackage{array}
\usepackage{mathtools}
\usepackage{multirow}
\usepackage{tikz}
\usepackage{tikz-cd}
\usepackage{adjustbox}
\usepackage{thmtools}
\usepackage{nicefrac}
\usepackage{enumerate}
\usepackage{enumitem}
\usepackage{caption}
\usepackage{subcaption}

\usepackage{lipsum}

\usepackage[colorlinks=true, citecolor=blue]{hyperref}
\usepackage{url}

\usepackage{todonotes}

\theoremstyle{plain}
\newtheorem{theorem}{Theorem}[section]
\newtheorem*{theorem*}{Theorem}
\newtheorem{cor}[theorem]{Corollary}
\newtheorem{lem}[theorem]{Lemma}
\newtheorem{prop}[theorem]{Proposition}

\theoremstyle{definition}
\newtheorem{defn}[theorem]{Definition}

\theoremstyle{remark}
\newtheorem{rmk}[theorem]{Remark}

\let\phi=\varphi

\def\Dim{\mathrm{dim}}

\def\C{\mathbb{C}}
\def\Z{\mathbb{Z}}
\def\N{\mathbb{N}}

\def\O{\mathcal{O}}
\def\A{\mathcal{A}}
\def\P{\mathcal{P}}

\let\ol\overline
\renewcommand{\ol}[1]{\overline{#1}}

\def\Res{\mathrm{Res}}
\def\Ind{\mathrm{Ind}}
\def\Hom{\mathrm{Hom}}
\def\ext{\mathrm{Ext}}

\def\g{\mathfrak{g}}

\def\s{\mathfrak{s}}

\def\n{\mathfrak{n}}
\def\h{\mathfrak{h}}
\def\sl{\mathfrak{sl}}

\def\b{\mathfrak{b}}

\def\img{\text{im}}
\def\sig{\sigma}

\def\eps{\varepsilon}
\def\del{\delta}
\def\px{\partial_\xi}
\def\Del{\Delta}
\def\part{\partial}
\def\lam{\lambda}
\def\pe{\mathfrak{pe}}

\newcommand*{\rom}[1]{\expandafter\@slowromancap\romannumeral #1@}

\title{Koszulity in the category $\O$ of the periplectic Lie Superalgebra $\mathfrak{pe}(2)$}
\author{Finn Kinley}
\address{}
\email{finnkinley2000@gmail.com}

\begin{document}

\begin{abstract}
    The main result of this paper is to establish precisely which blocks in the Category $\O$ of the periplectic Lie superalgebra $\pe(2)$ are Koszul. It is known that $\O(\pe(2))$ has three blocks up to equivalence; one generic block and two integral blocks. The generic block is known to be Koszul, and the principle integral block is verifiably not Koszul. In this paper, we prove that the remaining of the three blocks of $\O(\pe(2))$ is Koszul. This is done by explicitly computing the endomorphism algebra of projective modules in this block and then proving that it is Koszul inductively. Along the way, we compute all $\ext$ groups between simples in this block. To compute the endomorphism algebra we are aided by a computer algebra tool developed in Mathematica, inspired by a post on Stack Exchange. 
\end{abstract}

\maketitle

\section{Introduction}

The periplectic Lie superalgebra $\pe(n)$ appears as one of the two ``strange'' superalgebras in the classification of simple Lie superalgebras \cite{KAC19778}. It is a subalgebra of $\mathfrak{gl}(n|n)$, preserving a specific non-degenerate odd symmetric bilinear form. Work has been done on studying the finite module category and category $\O$ of $\pe(n)$, for example 
\cite{KC_PBA}, \cite{CHEN201599}, \cite{periplect1}, \cite{Balagovi2016TranslationFA}, 
\cite{Chen_Jmiddle},
\cite{10.1093/imrn/rnae230},
\cite{Balagovi2018TheAV},
\cite{CHEN_MAZORCHUK_2023}, and \cite{SERGANOVA2002615}. 

In \cite{beilinson_ginzburg_soergel_1996} it was shown that the category $\O$ of any complex semisimple Lie algebra is Koszul. That is, the endomorphism algebra of its projective modules is a Koszul algebra. Koszul algebras are among the homologically best-behaved non-semisimple graded algebras. In contrast, to what extent Koszulity holds in the category $\O$ of Lie superalgebras is not well understood. It has been shown in \cite{Brundan_Losev_Webster} that all blocks of the category $\O$ of $\mathfrak{gl}(m|n)$ are Koszul, which gives hope for Koszulity of blocks in the category $\O$ of other Lie superalgebras. 

Existence of Koszul blocks in the category $\O$ of $\pe(n)$ is, for general $n$, not known. Some things can be said for $n=2$, thanks to \cite{periplect1} and \cite{KC-Tak}. Further, for arbitrary $n\geq2$, we know that not all blocks of $\O(\pe(n))$ (as classified in \cite{periplect1}) can be Koszul, as the category of finite $\pe(n)$ modules does not admit a Koszul grading \cite{10.1093/imrn/rnae230}.

The main goal of this paper is to establish precisely which blocks of $\O(\pe(2))$ are Koszul. We build off work done in \cite{periplect1} and \cite{KC-Tak}, which individually show that $\O(\pe(2))$ has three blocks (up to equivalence), denoted by $\O_{\kappa}, \O_{odd}$ and $\O_{even}$. After computing the endomorphism algebra of the ``generic'' block $\O_{\kappa}$, it is easily seen to be Koszul  (see \cite{periplect1}). The integral block $\O_{even}$ is not Koszul (see \cite{KC-Tak}), as it contains a module with simple top and socle, but whose socle and radical filtrations do not coincide (such a module in a Koszul category must be \textit{rigid}, that is, these filtrations would have to coincide \cite{beilinson_ginzburg_soergel_1996}). In this paper, we prove that the remaining integral block $\O_{odd}$ is Koszul. In this case, computing the endomorphism algebra is quite involved, and Koszulity is verified with a finicky inductive proof. As part of the proof, we compute all $\ext$ groups between simples in $\O_{odd}$. To compute the endomorphism algebra we are aided by a computer algebra tool written in Mathematica, inspired by a post on Stack Exchange \cite{lamers_2018}.

\subsection*{Structure of the paper}

This paper is organised as follows. In Section \ref{sec:prelims} we explicitly define $\pe(2)$ and go over other preliminary definitions. In Section \ref{sec:indec_projectives} we describe the indecomposable projective modules in $\O_{odd}$ as inductions from projective modules in, what is essentially, $\O(\sl_2)$. In Section \ref{sec:describing_A}, we compute the dimensions of all homomorphism spaces between the projective modules in $\O(\pe(2))$, and then describe the endomorphism algebra of projectives as a quotient of a path algebra with quadratic relations. Finally, in Section \ref{sec:koszul} we prove that the endomorphism algebra of $\O_{odd}$ is Koszul and compute all higher order extension groups between simples in $\O_{odd}$. 

\subsection*{Acknowledgements}
 My greatest thanks to Kevin Coulembier, who offered me this problem for my Masters thesis, and whose continued support after finishing my degree facilitated the writing of this paper. 
 
 The writing of this paper was financially supported by the Trowbridge scholarship from The University of Sydney.

\section{Preliminaries}\label{sec:prelims}

For an overview and a general introduction to Lie Superalgebras and their representations see \cite{Musson} and \cite{Serganova2017RepresentationsOL}. 

\subsection{Definition of \texorpdfstring{$\pe(2)$}{pe(2)}}
Let $\s=\sl_2$. We fix a triangular decomposition $\s=\n_\s^-\oplus \h_\s\oplus \n_\s^+$ where 
\begin{align*}
    y := \begin{bmatrix} 0 & 0 \\ 1 & 0 \end{bmatrix}, \ \ h:=\begin{bmatrix} 1 & 0 \\ 0 & -1 \end{bmatrix} \ \ \mathrm{ and }  \ \ x:=\begin{bmatrix} 0 & 1 \\ 0 & 0 \end{bmatrix} 
\end{align*}
span $\n_\s^-,\h_\s$ and $\n_\s^+$ respectively. Define the vector space \[\g=(\s\otimes\C\xi)\oplus (\s\oplus \C\xi\px)\oplus (\C\px),\] where $\xi,\xi\px$ and $\px$ are formal variables. We write $\g_{-1}=\s\otimes\C\xi$, $\g_0=\s\oplus\C\xi\px$, $\g_1=\C\px$, and give $\g$ a $\Z_2$-grading by letting \[\g_{\ol{0}}=\g_0  \ \mathrm{ and } \ \g_{\ol{1}}=\g_{-1}\oplus\g_1.\] To endow $\g$ with the structure of a Lie superalgebra, we define a superbracket $[\cdot,\cdot]:\g\to\g$ on basis elements of $\g$. We let this bracket, when restricted to $\s$, be the usual bracket on $\sl_2$. Now let $f,g\in\s$ and define the following bracket relations.

\begin{enumerate}[label=(\alph*)]
    \item $[\g_1,\g_1]=0=[\g_{-1},\g_{-1}]$.
    \item $[f,\xi\px]=0=[\xi\px,\xi\px]$.
    \item $[\px,f\otimes\xi]=f$.
    \item $[f,g\otimes\xi]=[f,g]\otimes\xi$.
    \item $[f,\px]=0$.
    \item $[\xi\px,f\otimes\xi]=f\otimes\xi$.
    \item $[\px,\xi\px]=\px$.
\end{enumerate}

This definition of $\g=\pe(2)$ aligns with the one in \cite{KC-Tak}, where $\pe(2)$ is realised as a semisimple extension of a Takiff superalgebra. We fix a triangular decomposition $\g=\n^-\oplus\h\oplus\n^+$ by setting 
\[\n^-=\g_{-1}\oplus \n_\s^-, \ \ \h=\h_\s\oplus\C\xi\px \ \mathrm{ and }\ \n^+=\n^+_\s\oplus\g_1.\]
Further, we have a triangular decomposition $\g_0=\n^-_0\oplus \h_0\oplus \n^+_0$, where $\n_0^-=\n_s^-$, $\h_0=\h$ and $\n_0^+=\n_s^+$.

Weights $\lam\in\h^*\cong\C^2$ are written as $\lam=a\eps+b\del$ where $\eps,\del\in\h^*$ are uniquely defined by \[\del(h)=0, \   \del(\xi\px)=-1\ \text{ and } \  \eps(h)=1,\ \eps(\xi\px)=0. \]
Using this basis of $\h^*$, the sets of positive and negative roots ($\Phi_+$ and $\Phi_-$ respectively) are given in Tables \ref{fig:roots}.

\begin{figure}[H]
    \centering
    \begin{subfigure}{.5\textwidth}
        \begin{tabular}{c|c}
          Basis of $\n^+$ & Roots in $\Phi_+$  \\
            \hline
            $x$ & $2\eps$\\
            $\px$ & $\delta $\\
        \end{tabular}
    \label{fig:roots+}
    \end{subfigure}%
    \begin{subfigure}{.5\textwidth}
            \begin{tabular}{c|c}
          Basis of $\n^-$ & Roots in $\Phi_-$  \\
            \hline
          $y$ & $-2\eps$ \\
          $y\otimes \xi$ & $-2\eps -\del$ \\
          $h\otimes \xi$ & $-\del$ \\
          $x\otimes\xi$ & $2\eps -\del$
        \end{tabular}  
    \label{fig:roots-}   
    \end{subfigure}
    \caption{Roots in $\Phi_+$ and $\Phi_-$ for basis elements of $\g$. }
    \label{fig:roots}
    \end{figure}

We write $\b=\h\oplus\n^+$ and call it the \textit{Borel} sub-algebra of $\g$. Denote by $U(\g)$ the \textit{universal enveloping algebra} of $\g$ (as defined in \cite{Musson}, for example).

Define the \textit{Verma module} of weight $\lam$ as \[\Del(\lam):=\mathrm{Ind}_\b^\g\C_\lam=U(\g)\otimes_{U(\b)}\C_\lam,\] where $\C_\lam$ is the $\b$-module spanned by $w$ with action $\n^+\cdot w=0 \ \text{ and } \ hw=\lam(h)w \ \ \forall h\in\h.$ Let $\b_0=\h_0\oplus\n^+_0$ be the Borel sub-algebra of $\g_0$, then we can similarly define the $\g_0$ Verma module $\Del_0(\lam)$ of weight $\lam$. Further, the PBW theorem (see \cite{Musson}) gives
\[\Del(\lam)\cong\mathrm{Ind}_{\g\geq0}^\g\Del_0(\lam).\]

The simple modules $L(\lam)$ and $L_0(\lam)$ are the simple quotients of $\Del(\lam)$ and $\Del_0(\lam)$ respectively.

To simplify notation, let \[x_-=x\otimes\xi, \  y_-=y\otimes\xi  \ \  \mathrm{ and } \ \ h_-=h\otimes\xi.\] Then the PBW theorem gives that $\Del_0(\lam)\cong U(\n^-)\otimes \C_\lam$ as vector spaces. In particular,  \[\{y^ny_-^{\alpha_1}h_-^{\alpha_2}x_-^{\alpha_3}\otimes w \  |  \  n\in\N, \ \alpha_i=0,1\}\] forms a basis of $\Del(\lam)$ (call this the \textit{PBW basis} of $\Del(\lam)$). 

Denote by $\O(\g)$ and $\O(\g_0)$ the category $\O$ of the Lie Superalgebras $\g$ and $\g_0$ respectively (as defined in \cite{Musson}, for example). Note that $\Del(\lam)\in\O(\g)$ and $\Del_0(\lam)\in\O(\g_0)$.

\begin{rmk}\label{rmk:g0_sl2}
Let $\O(\g_0)_b$ be the full subcategory of $\O(\g_0)$ where weights have $\delta$ coefficient equal to $b$. Then clearly $\O(\g_0)_b\cong\O(\sl_2)$ (which we understand very well, see \cite{Mazorchuk} for instance) and $\O(\g_0)=\bigoplus_b\O(\g_0)_b$.
\end{rmk}

Further, we have the following Lemma.
\begin{lem}\label{lem:catO_catOg0}
    Modules $M\in \O(\g)$ when restricted to $\g_0$-modules are in the category $\O(\g_0)$. 
\end{lem}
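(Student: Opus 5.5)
We check the three defining axioms of $\O(\g_0)$ for $\Res^\g_{\g_0}M$. Recall that $M\in\O(\g)$ means: $M$ is finitely generated over $U(\g)$, $M$ is $\h$-semisimple, and $M$ is locally $U(\n^+)$-finite. The category $\O(\g_0)$ is defined by the same three conditions with $\g,\h,\n^+$ replaced by $\g_0,\h_0,\n_0^+$. Since $\h_0=\h$, the weight-space decomposition of $M$ is unchanged under restriction, so $\h_0$-semisimplicity is immediate.

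For local $\n_0^+$-finiteness, take $v\in M$ and set $V=U(\n^+)v$, which is finite dimensional by assumption. Since $\n_0^+\subset\n^+$, we have $U(\n_0^+)v\subseteq V$, so it is also finite dimensional.

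The main step is finite generation over $U(\g_0)$. By the PBW theorem for Lie superalgebras, multiplication gives a surjection
\[
U(\g_0)\otimes\Lambda(\g_{\ol1})\twoheadrightarrow U(\g).
\]
Here $\Lambda(\g_{\ol1})$ is finite dimensional, of dimension $2^4$, because $\g_{\ol1}=\g_{-1}\oplus\g_1$ has basis $x_-,h_-,y_-,\px$. So $U(\g)$ is a finitely generated left $U(\g_0)$-module. Concretely, it is generated by the finitely many ordered monomials in $x_-,h_-,y_-,\px$ with each exponent $0$ or $1$; this matches the PBW basis in the definition of $\Del(\lam)$. If $v_1,\dots,v_k$ generate $M$ over $U(\g)$, then the finite set $\{mv_i\}$, with $m$ ranging over these odd monomials, generates $M$ over $U(\g_0)$.

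The only point needing care is exactly this PBW statement that $U(\g)$ is finite over $U(\g_0)$, and it is standard (see \cite{Musson}). Everything else is immediate from the inclusions $\h_0=\h$ and $\n_0^+\subset\n^+$. Combined with Remark \ref{rmk:g0_sl2}, it then follows that the restriction of $M$ decomposes into finitely many $\O(\sl_2)$-components indexed by $\delta$-coefficients.
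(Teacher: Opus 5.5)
The paper states this lemma without proof and treats it as standard. Your argument is correct and is exactly the standard justification: the only nontrivial input is the super PBW theorem, which makes $U(\g)$ a free left $U(\g_0)$-module on the $2^4=16$ ordered monomials in $x_-,h_-,y_-,\px$ with exponents $0$ or $1$.

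One wording nit concerns the ``multiplication'' map. $\Lambda(\g_{\ol{1}})$ is not a subalgebra of $U(\g)$; for instance $\px x_-+x_-\px=x\neq0$. So the map should be read as $u\otimes f_{i_1}\wedge\cdots\wedge f_{i_k}\mapsto uf_{i_1}\cdots f_{i_k}$ on ordered odd monomials, and it is in fact a linear isomorphism, not merely a surjection. Also, the comparison with the PBW basis of $\Del(\lam)$ is only loose, since that basis involves $\g_{-1}$ alone and not $\px$.
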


We will describe the projective modules in the block of interest in $\O(\g)$ as inductions from the projective modules of $\O(\g_0)$, which we understand well by remark \ref{rmk:g0_sl2}.

\subsection{Reductions, decompositions and blocks of \texorpdfstring{$\O(\g)$}{O(g)}}
Let $\Lambda'\subset\h^*$ be the set of weights of the form $a\eps+b\del$ with $b\in\Z$. Denote by $\O_{\Lambda '}$ the full subcategory of $\O(\g)$ consisting of modules that only contain weights in $\Lambda'$. By the same weight considerations given in \cite{KC-Tak} on reducing the problem of studying finite $\g$-modules, we have a decomposition \[\O(\g)\cong\bigoplus_{x\in\C/\Z}\O_{x},\]
where $\O_{x}\cong\O_{\Lambda '}$ for all $x\in\C/\Z$. In particular, this decomposition follows from the observation that elements in $\g$ can change the $\xi\px$ component of a weight by only $\pm 1$ or $0$. We thus reduce the problem of studying $\O(\g)$ to studying $\O_{\Lambda'}$.

As a final reduction (also analogously seen in \cite{KC-Tak}), we can decompose $\O_{\Lambda'}=\O_0\oplus \O_1$, where weight vectors in a module $M\in \O_0$ of weight $a\eps+b\del \in\Lambda'$ have degree $b\mod 2$ and $\O_1$ is the image of $\O_0$ under the functor $\Pi:\O\to\O$ which swaps parity. Hence, from here on $\O$ will refer to $\O_0$ without loss of generality. 

Let $\Lambda\subset\Lambda '$ be the set of weights $a\eps+b\del$ such that $a,b\in\Z$. Then \cite{KC-Tak} describes the block decomposition of $\O_\Lambda$ (the full subcategory of $\O$ with weights in $\Lambda$).

\begin{theorem}\label{thm:blocks}
    We have that $\O_\Lambda$ decomposes into three blocks. The highest weights of the simple modules in each block are
    \begin{align*}
        &\{2n\eps + b\del : n,b\in\Z\},\\
        &\{(2n+1)\eps + (2b-n)\del : n,b\in\Z\},\\
        &\{(2n+1)\eps + (2b-n-1)\del : n,b\in\Z\}.
    \end{align*}
    Denote these three blocks by $\O_{\mathrm{even}},\O_{odd}^0$ and $\O_{odd}^1$ respectively. 
\end{theorem}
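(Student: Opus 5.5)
The plan has two halves: show that simples in different listed families cannot be linked, and show that simples within one family are all linked. Linkage is generated by nonsplit extensions, and in particular by composition factors of Verma modules.

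\emph{Invariants separating the sets.} Fix $\lam=a\eps+b\del\in\Lambda$. Every weight of $\Del(\lam)$ is $\lam$ minus a sum of roots. By Table \ref{fig:roots}, each root has the form $2k\eps+m\del$ with $m\in\{0,\pm1\}$, so the parity of $a$ is constant on weights of $\Del(\lam)$. Hence the parity of $a$ is an invariant of each indecomposable module, and $\O_{\mathrm{even}}$ is separated from the odd weights.

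For odd $a=2n+1$ we need a finer invariant distinguishing $b\equiv -n$ from $b\equiv -n-1 \pmod 2$. I would use the odd central-type element, or equivalently the Casimir-like invariant from \cite{KC-Tak}, evaluated on highest weight vectors. Concretely, I would compute the action of $\px$ on the vectors $h_-\otimes w$, $x_-\otimes w$, $y_-\otimes w$, $y_-h_-x_-\otimes w$, etc. of $\Del(\lam)$ via relations (c),(f),(g). This determines the singular vectors and the factors of $\Del(\lam)$, with $\sl_2$-Verma structure (Remark \ref{rmk:g0_sl2}) handling the $y$ direction.

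The expected outcome is that $[\Del(\lam):L(\mu)]\ne0$ only if $\mu-\lam$ preserves the quantity $\frac{a-1}{2}+b \pmod 2$ when $a$ is odd. Equivalently, the $\del$-shift $m$ of a linking step is tied to the $\eps$-shift: $\mu=\lam-2\eps-\del$, $\lam-\del$, $\lam+2\eps-\del$ combined with $\sl_2$ reflections $a\mapsto -a-2$. The key check is that each elementary link changes $n$ and $b$ compatibly. For example, the $\sl_2$ reflection sends $n\mapsto -n-1$, which changes $n$ by an odd amount when $n\ge0$. It must therefore come paired with a $\del$ shift, so I must verify that for odd $a$ a pure $\sl_2$ link never occurs without an odd shift.

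This verification is the main obstacle. I would handle it by analyzing the $\g_0$-structure of $\Del(\lam)=\Ind_{\g_{\ge0}}^{\g}\Del_0(\lam)$, whose $\g_0$-Verma flag has subquotients $\Del_0(\lam-\gamma)$ for $\gamma$ in the sums of distinct odd negative roots. I would then locate singular vectors explicitly, using the PBW basis and a case split on $a$ dominant or antidominant.

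\emph{Connectivity.} Within each set I would exhibit explicit nonsplit extensions, i.e.\ singular or subsingular vectors in Vermas, linking $\lam$ to generators of the set, such as $\lam\mapsto\lam-\del$ twice and $\lam\mapsto$ a reflected weight. Together these show that all listed weights form a single linkage class. This also requires verifying that Vermas are not simple at enough weights, which follows from computing $\px$ on $h_-x_-\otimes w$-type vectors.

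Finally, since $\O_\Lambda$ is a highest weight category with standard objects $\Del(\lam)$, linkage through Verma composition factors generates the block relation. The three sets partition $\Lambda$, so this completes the decomposition.
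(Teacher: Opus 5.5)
Your parity argument for the $\eps$-coefficient is fine and does split off $\O_{\mathrm{even}}$. The treatment of the odd part, however, contains two concrete errors.

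First, for $a=2n+1$ the dot-reflection $a\mapsto -a-2$ sends $n\mapsto -n-2$, not $n\mapsto -n-1$. It changes $n$ by the even number $2n+2$, and so it preserves $b+\frac{a-1}{2}\bmod 2$ with no $\del$-shift at all. The check you single out as the main obstacle (that a pure $\sl_2$ link never occurs without an odd $\del$-shift) is therefore false. For odd $a\geq 1$ the vector $y^{a+1}\otimes w$ is singular in $\Del(\lam)$: it is killed by $x$, and by $\px$ because $[\px,y]=0$. Hence $L(\lam')$, with $\lam'=(-a-2)\eps+b\del$, is a composition factor of $\Del(\lam)$.

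Second, you list $\mu=\lam-\del$ among the elementary links. That step changes $b$ by one at fixed $a$, so it would merge $\O_{odd}^0$ with $\O_{odd}^1$ and contradict the statement. It never occurs. For odd $a$ one has $[\Del(\lam):L_0(\lam-\del)]=1$, so Proposition~\ref{prop:dim_step2} gives $[\Del(\lam):L(\lam-\del)]=0$; the $\g_0$-factor $L_0(\lam-\del)$ is absorbed into $L(\lam)\cong L_0(\lam)\oplus L_0(\lam-\del)$.

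Carrying out the $\g_0$-character computation (Propositions~\ref{prop:dim_step1}, \ref{prop:dim_step2}, \ref{prop:dim_counts}) shows that every composition factor of $\Del(\lam)$ has highest weight $\nu$ or $\nu'$ with $\nu\in\{\lam,\lam-2\del,\lam\pm2\eps-\del\}$. Each of these preserves the invariant. The invariant is not constant on the weights of a module, since the root $2\eps$ changes it, so it genuinely has to be read off from multiplicities. For connectivity you need the links $\lam\pm2\eps-\del$, which are the only ones moving $a$ beyond the reflection. Your ``$\lam\mapsto\lam-\del$ twice'' passes through the other block.

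The closing step is a genuine gap. In a highest weight category, composition factors of standard objects need not generate the block relation. In general one also needs $(P(\kappa):\Del(\nu))\neq0$, equivalently $[\nabla(\nu):L(\kappa)]\neq0$. This follows from the first relation only when a simple-preserving duality identifies $[\nabla(\nu):L(\kappa)]$ with $[\Del(\nu):L(\kappa)]$. For instance, the path algebra of an $A_2$ quiver, suitably ordered, has simple standard modules but a single block.

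$\O(\pe(2))$ has no such duality. For odd $a\geq 3$ one has $[\Del(\lam):L(\lam-2\del)]=1$, while $P(\lam-2\del)=\Del(\lam-2\del)$, so $[\nabla(\lam):L(\lam-2\del)]=0$ by BGG reciprocity. Consequently a nonsplit extension $0\to L(\mu)\to E\to L(\lam)\to 0$ with $\mu$ above $\lam$ is not controlled by Verma multiplicities, and your separation argument is incomplete.

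The missing input is the standard flag of the projectives. Your parity argument shows the odd-$a$ part is a union of blocks, and every simple there has $a\neq0$. So the argument of Corollary~\ref{cor:kerpx=imgpx_all_O} gives $\ker\px=\img\px$ there, and hence $P(\lam)=\Ind_{\g_{\geq0}}^{\g}P_0(\lam)$. Its standard flag consists of $\Del(\lam)$ and, when $a\leq -3$, $\Del(\lam')$ (Lemma~\ref{lem:SES_projective_verma}). The block relation is generated by $[P(\kappa):L(\kappa')]\neq 0$, so this together with the Verma multiplicities above yields both separation and connectivity.

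The paper gives no argument of its own here and simply cites \cite{KC-Tak}. A corrected version of your plan would be a self-contained alternative, but as written it aims at a false intermediate claim and rests on an invalid general principle.
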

\begin{proof}
    See \cite{KC-Tak}. 
\end{proof}

Finally, we provide the block decomposition of $\O$ seen in \cite{periplect1}. Let $K=(\C/2\Z)\setminus\{[0],[1]\}$ be the set of cosets in $\C/2\Z$ excluding the even and odd integer cosets. Let $\kappa\in K$ and define $\O_\kappa$ to be the full subcategory of $\O$ which contains weights of the form $a\eps+b\del$ where $a\in\kappa$ and $b\in\Z$. Then 

\begin{equation}\label{eq:full_block_decomp}
        \O=\bigoplus_{\kappa\in K}\O_\kappa\oplus \O_{\mathrm{even}}\oplus \O_{odd}^0\oplus \O_{odd}^1,
\end{equation}
is the block decomposition of $\O$. It was also shown in \cite{periplect1} that $\O_{odd}^0\cong \O_{odd}^1$, $\O_{\kappa_1}\cong\O_{\kappa_2}$ for all $\kappa_1,\kappa_2\in K$, and that these are the only equivalences between blocks. Henceforth, we write $\O_{odd}$ for $\O_{odd}^0$.

\section{Indecomposable projectives in \texorpdfstring{$\O_{odd}$}{Oodd} and \texorpdfstring{$\O_\kappa$}{Ok}}\label{sec:indec_projectives}
First observe that in $U(\g)$, $\px^2+\px^2=[\px,\px]=0$ and so $\px^2=0$. In particular, for the linear map $\px:M\to M$ on any $\g$-module $M$, we have $\img\px\subseteq\ker\px$. In $\O_{odd}$ and $\O_{\kappa}$, we show that the other inclusion holds and that we can write indecomposable projective modules as inductions from indecomposable projectives in $\O(\g_0)$. 

For the following, we let $\lam=a\eps+b\del\in\h^*$. We write $\ker\px(M)$ and $\img\px(M)$ where $M\in\O$ for the image and kernel of the linear map $\px:M\to M$. Further, let $P_0(\lam)\in\O(\g_0)$ be the projective module covering $L_0(\lam)\in\O(\g_0)$. As $\sl_2$ modules, $P_0(\lam)$ is isomorphic to the indecomposable projective cover of the simple $\O(\sl_2)$ module of weight $a\eps$, as per Remark \ref{rmk:g0_sl2}. 
\begin{lem}\label{lem:kerpx=impx_simples}
    If $a\neq 0$ then $\ker\px(L(\lam))= \mathrm{im} \px(L(\lam))$. 
\end{lem}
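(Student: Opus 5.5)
The plan is to study the cohomology $H(M):=\ker\px(M)/\img\px(M)$, which makes sense because $\px^2=0$. We show that it is a very degenerate $\g_0$-module, and then use weight considerations to force it to vanish on $L(\lam)$ when $a\neq 0$.

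\emph{Step 1: $H(M)$ is a $\g_0$-module on which $\s$ acts trivially.} By relations (e) and (g), $[f,\px]=0$ for $f\in\s$ and $[\xi\px,\px]=-\px$. Hence $\g_0$ preserves both $\ker\px(M)$ and $\img\px(M)$, so $H(M)$ is a $\g_0$-module. Now take $v\in\ker\px(M)$ and $f\in\s$. Relation (c) gives $\px(f\otimes\xi)+(f\otimes\xi)\px=f$ in $U(\g)$. Applying this to $v$ yields
\[ f v=\px\big((f\otimes\xi)v\big)\in\img\px(M). \]
So $\s$ acts by zero on $H(M)$. Since $h$ acts by zero, every weight vector of $H(M)$ has weight $a'\eps+b'\del$ with $a'=0$.

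Concretely, a weight vector $v\in\ker\px$ of weight $a'\eps+b'\del$ with $a'\neq 0$ satisfies $v=\tfrac{1}{a'}\px(h_-v)$. This is the key identity.

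\emph{Step 2: dispose of the easy cases.} All weights of $L(\lam)$ lie in $\lam-\Z_{\geq 0}\Phi_+$, so they have $\eps$-coefficient in $a+2\Z$. If $a\notin 2\Z$, which covers $\O_\kappa$ and $\O_{odd}$, no weight of $L(\lam)$ has $a'=0$. Then $H(L(\lam))=0$ by Step 1, which gives $\ker\px=\img\px$.

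\emph{Step 3: the case $a\in 2\Z\setminus\{0\}$ (the main obstacle).} Here weights with $a'=0$ do occur, and Step 1 alone is not enough. First I would try an extremal-vector argument. Suppose $H(L(\lam))\neq 0$ and pick a weight vector $v\in\ker\px$ of weight $\mu=0\eps+b'\del$ representing a nonzero class, with $b'$ chosen extremal. By Step 1, $xv=\px(x_-v)$ and $yv=\px(y_-v)$. Writing $x_-v=\px w+u$ with suitable corrections, I would try to modify $v$ by an element of $\img\px$ until it is annihilated by $x$ and $\px$, i.e. until it becomes a $\b$-singular vector. A singular vector in the simple module $L(\lam)$ must be a multiple of the highest weight vector, whose $\eps$-coefficient is $a\neq 0$, a contradiction. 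If a good modification is hard to construct directly, the fallback is to compute explicitly. In that case I would realise $L(\lam)$ as a quotient of $\Ind_{\g_{\geq0}}^\g L_0(\lam)$, which has the PBW basis $\{y_-^{\alpha_1}h_-^{\alpha_2}x_-^{\alpha_3}\}$ tensored with the $\sl_2$-module $L_0(\lam)$. Then I would check, weight space by weight space at $a'=0$, that $\px$ is exact. Only finitely many $\Lambda^\bullet\g_{-1}$-layers are involved, so this is a finite check. Controlling the quotient to $L(\lam)$ in this explicit check is where I expect the real difficulty to lie.
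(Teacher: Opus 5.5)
\textbf{Verdict: there is a genuine gap.} Your argument is complete only for $a\notin 2\Z$; for $a\in2\Z\setminus\{0\}$, which the lemma also covers, your Step 3 is a plan rather than a proof.

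\textbf{What works.} Steps 1 and 2 are correct, and they take a different route from the paper. The identity $fv=\px\big((f\otimes\xi)v\big)$ for $v\in\ker\px$ and $f\in\s$ shows that $\ker\px(M)=\img\px(M)$ for \emph{every} module $M$ whose weights all have nonzero $\eps$-coefficient. So it gives Corollary~\ref{cor:kerpx=imgpx_all_O} for $\O_{odd}$ and $\O_\kappa$ directly, with no induction on length and no input from \cite{KC-Tak}. One small slip: weights of $L(\lam)$ lie in $\lam+\Z_{\geq0}\Phi_-$, not in $\lam-\Z_{\geq0}\Phi_+$, since $2\eps-\del\in\Phi_-$. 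Your parity conclusion still holds, because every root has even $\eps$-coefficient.

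\textbf{Where it breaks.} Take $a=2$. The vectors $yv_\lam$ and $yh_-v_\lam$ are nonzero in $L(\lam)$ and have $\eps$-coefficient $0$, so Step 1 says nothing about them. You never show that a cycle $v$ of weight $0\eps+b'\del$ can be corrected by an element of $\img\px$ to one killed by $x$. Knowing $xv=\px(x_-v)\in\img\px$ does not give this. You also concede that the explicit fallback, controlling the passage to the quotient $L(\lam)$, is unresolved.

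\textbf{How the gap can be closed.} The paper imports the $\g_0$-decomposition $L(\lam)\cong L_0(\lam)\oplus L_0(\lam-\del)$ from \cite{KC-Tak}. Since $\px h_-v_\lam=av_\lam\neq0$ and $\px$ commutes with $\s$, $\px$ maps $L_0(\lam-\del)$ isomorphically onto $L_0(\lam)$ and kills $L_0(\lam)$. This handles every $a\neq0$ at once.

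Alternatively, your singular-vector idea can be finished with a central-character argument instead of an ad hoc correction:
\begin{enumerate}
\item Each $\del$-graded piece of $L(\lam)$ lies in $\O(\sl_2)$, and $\px$ is $\s$-linear. So the complex $(L(\lam),\px)$ splits along the generalized eigenspaces of the $\sl_2$-Casimir.
\item The Casimir acts by zero on $H$, because $\s$ does. Hence only the summand with the central character of the trivial module contributes to $H$.
\item That summand has only composition factors with $\sl_2$-highest weights $0$ and $-2$. So all its $\eps$-coefficients lie in $\{0,-2,-4,\dots\}$.
\item Therefore the projection $v_0$ of your representative satisfies $xv_0=0$ automatically. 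It is then killed by $\n^+=\C x\oplus\C\px$ and has weight different from $\lam$ (as $a\neq0$), so it vanishes in the simple module $L(\lam)$.
\end{enumerate}
Your proposal needs one of these two inputs to settle the even case.
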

\begin{proof}
    It was shown in \cite{KC-Tak} that $L(\lam)\cong L_0(\lam)\oplus L_0(\lam-\del)$ as $\g_0$-modules. Let $v_{\lam-\del}$ and $v_\lam$ be maximal vectors of $L_0(\lam-\del)$ and $L_0(\lam)$ in $L(
    \lam)$ respectively. Clearly, $\px v_{\lam-\del}\in L(\lam)_\lam$, so $\px v_{\lam-\del}=\alpha v_\lam$ for some $\alpha\in\C$. If $\alpha=0$ then $v_{\lam-\del}$ would be highest weight in $L(\lam)$, which is nonsense. Hence, $\alpha\neq 0$ and the $\g_0$-module homomorphism $\px |_{L_0(\lam-\del)}:L_0(\lam-\del)\to L_0(\lam)$ is bijective. Finally, because $L_0(\lam)=U(\n^-_0)v_\lam$ and $\px$ commutes with $\n^-_0$, we have \[\ker\px(L(\lam))=L_0(\lam)=\img\px(L(\lam)).\] 
\end{proof}

\begin{cor}\label{cor:kerpx=imgpx_all_O}
    Let $M\in\O_{odd}$ or $M\in\O_\kappa$ for any $\kappa\in K$. Then $\ker\px(M)=\mathrm{im }\px(M)$. 
\end{cor}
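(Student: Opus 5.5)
The plan is to reduce the statement to Lemma \ref{lem:kerpx=impx_simples} by induction on the length of $M$, treating $\px$ as a differential. Since $\px^2=0$, every $M\in\O$ is a complex with differential $\px$. Write $H(M)=\ker\px(M)/\img\px(M)$. The claim is exactly that $H(M)=0$.

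First I check that every simple subquotient of such an $M$ satisfies the hypothesis $a\neq 0$ of Lemma \ref{lem:kerpx=impx_simples}.
\begin{itemize}
\item In $\O_{odd}$, all highest weights of simples have the form $(2n+1)\eps+\dots$ by Theorem \ref{thm:blocks}, so $a$ is odd and in particular nonzero.
\item In $\O_\kappa$, $a\in\kappa$ with $\kappa\notin\{[0],[1]\}$, so $a\notin\Z$ and again $a\neq0$.
\end{itemize}
Since both are blocks (equation \eqref{eq:full_block_decomp}), all composition factors of $M$ are simples $L(\lam)$ of this type. Hence $H(L(\lam))=0$ for each of them.

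Next I use the fact that modules in $\O(\g)$ have finite length. This follows from Lemma \ref{lem:catO_catOg0}: $\Res M$ lies in $\O(\g_0)$, which has finite length by Remark \ref{rmk:g0_sl2}, and each $\g$-composition factor contributes at least one $\g_0$-composition factor.

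I then induct on the length of $M$. The base case, length one, is Lemma \ref{lem:kerpx=impx_simples}. For the inductive step, choose a short exact sequence of $\g$-modules $0\to N\to M\to Q\to 0$ with $N$ and $Q$ of smaller length. Each of $N$ and $Q$ lies in the same block, so $H(N)=H(Q)=0$ by induction.

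To get an honest long exact sequence I grade by the $\del$-coefficient. Since $\px$ has weight $\del$, it maps $M^{(b)}\to M^{(b+1)}$, where $M^{(b)}$ is the sum of weight spaces with $\del$-coefficient $b$. This makes $M$ a $\Z$-graded cochain complex, and the sequence above is a short exact sequence of such complexes, because $\g$-maps commute with $\px$ and preserve weights. The standard long exact sequence in cohomology, $\cdots\to H^b(N)\to H^b(M)\to H^b(Q)\to\cdots$, then forces $H^b(M)=0$ for every $b$. Hence $\ker\px(M)=\img\px(M)$.

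The only step needing care is the finite-length input, which lets the induction terminate. Should one prefer to avoid citing it, there is a fallback: everything above can be done one weight at a time, since each weight space is finite-dimensional. One then argues with a finite filtration of the relevant weight spaces, which exists by the usual local composition series argument in category $\O$. The homological step itself is routine diagram chasing via the snake lemma.
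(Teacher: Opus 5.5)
Your proposal is correct and follows the same route as the paper: induction on the length of $M$, with base case Lemma \ref{lem:kerpx=impx_simples} and inductive step through a short exact sequence $0\to N\to M\to Q\to 0$ (the paper takes $Q$ simple). The $\del$-grading and long exact cohomology sequence only make explicit the diagram chase the paper leaves implicit, and your checks that $a\neq 0$ for every composition factor and that modules in $\O$ have finite length supply details the paper omits.
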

\begin{proof}
    We induct on the length $n$ of $M\in\O_{odd}$ or $M\in \O_\kappa$. If $n=1$, then Lemma \ref{lem:kerpx=impx_simples} immediately gives the result. Recall that $\img\px(M)\subseteq\ker\px(M)$ is always true, so we need only show the other inclusion. The result follows by induction and considering the short exact sequence 
    $
        \begin{tikzcd}
            0 \arrow[r] & N \arrow[r] & M \arrow[r] & M/N \arrow[r] & 0,
        \end{tikzcd}
    $
    where $N\subset M$ is a maximal submodule and $L=M/N$ is its simple quotient.
\end{proof}

\begin{rmk}
    This same argument does not work for modules in $\O_{\text{even}}$ because when $a=0$ (so $\lam=b\del$), one can show that $L(\lam)\cong L_0(\lam)$ as $\g_0$-modules. 
\end{rmk}

Since $\g_0$ commutes with $\px$, it follows that $\ker\px(M)$ is a $\g_0$-submodule of $M\in\O$ (upon restriction). Furthermore, because $M\in\O(\g_0)$ by Lemma  \ref{lem:catO_catOg0}, we have $\ker\px(M)\in\O(\g_0)$. In this way, 
we can think of $\ker\px$ as a (covariant) functor from $\O$ to $\O(\g_0)$, where a morphism $f:M\to M'$ in $\O$ is associated to $\hat{f}=f|_{\ker\px(M)}$. 

\begin{lem}\label{lem:exactnessofkerpx}
    When restricted to modules in $\O_{odd}$ or $\O_\kappa$, the functor $\ker\px$ is exact.
\end{lem}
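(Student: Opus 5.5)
Given a short exact sequence $0\to N\xrightarrow{\iota} M\xrightarrow{\pi} Q\to 0$ in $\O_{odd}$ (or in $\O_\kappa$), we must show that
\[0\to\ker\px(N)\to\ker\px(M)\to\ker\px(Q)\to 0\]
is exact in $\O(\g_0)$. Both blocks are direct summands of $\O$, so $N$ and $Q$ lie in the same block as $M$. Hence Corollary \ref{cor:kerpx=imgpx_all_O} applies to all three modules. The maps are restrictions of $\iota$ and $\pi$, and they are $\g_0$-linear because $\px$ commutes with $\g_0$ (relation (e); the action of $\xi\px$ only rescales $\px$ by relation (g), so it preserves $\ker\px$). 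Since these restrictions are the maps of a functor, the composite is zero.

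The key steps are routine diagram checks.

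\emph{Injectivity.} The restriction of $\iota$ is injective, being a restriction of an injective map.

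\emph{Exactness in the middle.} Let $m\in\ker\px(M)$ with $\pi(m)=0$. Then $m=\iota(n)$ for some $n\in N$. Since $\iota(\px n)=\px m=0$ and $\iota$ is injective, we get $\px n=0$, so $n\in\ker\px(N)$.

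\emph{Surjectivity.} This is the only place the hypothesis enters. Take $q\in\ker\px(Q)$. By Corollary \ref{cor:kerpx=imgpx_all_O} applied to $Q$, we can write $q=\px q'$ for some $q'\in Q$. Choose $m'\in M$ with $\pi(m')=q'$ and set $m=\px m'$. Then $\px m=\px^2m'=0$, so $m\in\ker\px(M)$, and $\pi(m)=\px\pi(m')=q$.

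So the surjectivity step is the only substantive one, and it is immediate once we use $\ker\px=\img\px$ on $Q$ together with $\px^2=0$. Note that we did not even need the corollary for $N$ or $M$.

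An alternative is to view $\px$ as a differential on each module, so that $M\mapsto\ker\px(M)$ fits into the long exact homology sequence. Acyclicity of $N$, $M$ and $Q$ (Corollary \ref{cor:kerpx=imgpx_all_O}) then gives exactness. The direct argument above is simpler, and it is the one I would write.
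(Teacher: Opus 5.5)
Your proof is correct and matches the paper's argument. Left exactness of $\ker\px$ holds for any short exact sequence, and surjectivity of $\ker\px(M)\to\ker\px(Q)$ follows from $\ker\px(Q)=\img\px(Q)$ (Corollary \ref{cor:kerpx=imgpx_all_O}) together with surjectivity of $\pi$. Your remark that $\xi\px$ does not commute with $\px$ but still preserves $\ker\px$ (via relation (g)) is also a correct refinement of the paper's slightly loose claim that $\g_0$ commutes with $\px$.
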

\begin{proof}
    Let \[
    \begin{tikzcd}
        0\arrow[r] & M' \arrow[r, "f"] & M \arrow[r, "g"] & M'' \arrow[r] & 0
    \end{tikzcd}
    \]
    be a short exact sequence of modules in $\O_{odd}$ or $\O_\kappa$. We claim that after applying $\ker\px$ to the above short exact sequence, it remains exact. Exactness at $\ker\px(M')$ and $\ker\px(M)$ holds for any such short exact sequence in $\O$. Exactness at $\ker\px(M')$ follows from $\ker\px (M'')=\img\px (M'')$ (by Corollary \ref{cor:kerpx=imgpx_all_O}) and the surjectivity of $g$.
\end{proof}

\begin{rmk}
We can think of modules in $\O(\g_0)$ as $U(\g_{\geq0})$-modules by letting $\g_1$ act as zero.
\end{rmk}

\begin{prop}\label{prop:projectives}
    Let $\lam$ be a weight in $\O_{odd}$ or $\O_\kappa$. Then, \[P_\lam:=\Ind_{\g_{\geq 0}}^\g P_0(\lam)\] is a projective $\g$-module. 
\end{prop}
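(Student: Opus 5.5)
Projectivity will follow from a Frobenius reciprocity argument that turns $\Hom_\g(P_\lam,-)$ into a composite of exact functors. For $M\in\O$ (in the relevant block, $\O_{odd}$ or $\O_\kappa$), adjunction between induction and restriction gives
\[
\Hom_\g(P_\lam,M)\cong\Hom_{\g_{\geq0}}(P_0(\lam),\Res^\g_{\g_{\geq0}}M),
\]
where $P_0(\lam)$ is viewed as a $\g_{\geq0}$-module with $\g_1=\C\px$ acting by zero. A $\g_{\geq0}$-homomorphism from a module killed by $\px$ must land in the $\px$-invariants of $M$. Since $\g_0$ commutes with $\px$ (by relations (e) and (g)), $\ker\px(M)$ is a $\g_0$-submodule. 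Hence
\[
\Hom_{\g_{\geq0}}(P_0(\lam),M)\cong\Hom_{\g_0}(P_0(\lam),\ker\px(M)),
\]
naturally in $M$.

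Strictly speaking, relation (g) gives $[\px,\xi\px]=\px$, so $\xi\px$ does not commute with $\px$. I need to handle this. Since $\px$ has weight $\del$, it maps weight spaces to weight spaces. So $\ker\px(M)$ is still stable under $\h$, because $\px(hv)=h\px v-\del(h)\px v=0$ when $\px v=0$. Stability under $\sl_2$ follows from (e). So $\ker\px(M)$ is a $\g_0$-submodule, as the paper already asserts. Also, $\g_{\geq0}=\g_0\oplus\g_1$ with $\g_1$ an ideal of $\g_{\geq0}$, so letting $\g_1$ act by zero on $P_0(\lam)$ is a legitimate $\g_{\geq0}$-module structure.

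The functor $M\mapsto\Hom_\g(P_\lam,M)$ on the block is then naturally isomorphic to the composite $\Hom_{\g_0}(P_0(\lam),-)\circ\ker\px$. By Lemma \ref{lem:exactnessofkerpx}, $\ker\px$ is exact on $\O_{odd}$ and $\O_\kappa$, and it lands in $\O(\g_0)$ by Lemma \ref{lem:catO_catOg0}. The module $P_0(\lam)$ is projective in $\O(\g_0)$ by Remark \ref{rmk:g0_sl2}. So the composite is exact on short exact sequences in the block.

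Two points remain.

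First, $P_\lam$ must lie in $\O$, and in fact in the block. It lies in $\O$ because $\Ind_{\g_{\geq0}}^\g\cong\Lambda(\g_{-1})\otimes-$ as vector spaces (PBW). This makes $P_\lam$ finitely generated and $\h$-semisimple with locally finite $\n^+$-action, since $\g_{-1}$ is finite dimensional. It has a finite Verma flag, as $P_0(\lam)$ has a $\Del_0$-flag and induction is exact and sends $\Del_0(\mu)$ to $\Del(\mu)$. The weights $\mu$ in this flag are linked to $\lam$ within $\O(\g_0)_b$, so they have $\eps$-coefficient $a$ or $-a-2$. Since $\kappa\in K=(\C/2\Z)\setminus\{[0],[1]\}$, the class $\kappa$ is preserved under $a\mapsto -a-2$, so all weights stay in $\O_\kappa$. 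In the odd case, I will check against Theorem \ref{thm:blocks} that every $\Del(\mu)$ occurring lies in $\O_{odd}$. If that fails, I will replace $P_\lam$ by its projection onto the block, which is harmless for projectivity.

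Second, projectivity in all of $\O$, not just in the block, follows from the block decomposition \eqref{eq:full_block_decomp}. For $M$ in another block, $\Hom(P_\lam,M)=0$.

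The main obstacle I expect is the block-membership check for $P_\lam$ in the odd case.
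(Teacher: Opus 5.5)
Your proposal is correct and is essentially the paper's proof: Frobenius reciprocity identifies $\Hom_\g(P_\lam,-)$ with $\Hom_{\g_0}(P_0(\lam),\ker\px(-))$, which is exact on $\O_{odd}$ and $\O_\kappa$ as a composite of exact functors. Your additions fill in details the paper leaves implicit or treats only after the proof. First, you correctly show that $\ker\px(M)$ is stable under $\xi\px$ by a weight argument, even though $[\px,\xi\px]=\px\neq 0$. Second, the block check you flagged as the main obstacle succeeds: $\lam\mapsto\lam'$ sends $(2n+1)\eps+(2b-n)\del$ to $(2n'+1)\eps+(2b'-n')\del$ with $n'=-n-2$ and $b'=b-n-1$, so both Verma subquotients of $P_\lam$ lie in $\O_{odd}$.
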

\begin{proof}
    By adjunction, we have $\Hom_\g(P_\lam,-)\cong\Hom_{\g_{\geq 0}}(P_0(\lam),\Res^\g_{\g_{\geq 0}}-)$ as functors. Observe that for any \(\phi\in\Hom_{\g_{\geq 0}}(P_0(\lam),\allowbreak \Res_{\g_{\geq 0}}^\g M)\), we have $\px\phi(v)=\phi(\px v)=0$ and so as functors
    \begin{align*}
        \Hom_{\g_{\geq 0}}(P_0(\lam),\Res^\g_{\g_{\geq 0}}-)&=\Hom_{\g_{\geq 0}}(P_0(\lam),\ker\px-)\\
        &= \Hom_{\g_0}(P_0(\lam), \ker\px -),
    \end{align*}
     which is exact in $\O_{odd}$ and $\O_\kappa$ as it is a composition of exact functors. 
\end{proof}

Observe that $P_\lam$ is generated by the element $1\otimes u_0$ (where $u_0$ generates $P_0(\lam)$) and $P_\lam\cong U(\g_{-1})\otimes P_0(\lam)$ by the PBW theorem. It then follows that $P_\lam\in\O$. Furthermore, we have

\begin{prop}
    The projective module $P_\lam=P(\lam)$, the indecomposable projective cover of $L(\lam)$ in $\O_{odd}$ or $\O_\kappa$.
\end{prop}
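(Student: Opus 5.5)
We already know from Proposition \ref{prop:projectives} that $P_\lam$ is projective. It therefore suffices to show two things: that $P_\lam$ has simple top $L(\lam)$, or equivalently that $\dim\Hom_\g(P_\lam,L(\mu))=\delta_{\lam\mu}$ for all simples $L(\mu)$ in the block; and that $P_\lam$ is indecomposable. In fact a projective with simple top is automatically indecomposable and is the projective cover of that top, so the first point is the whole content.

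To compute the Hom spaces, use the adjunction isomorphism from the proof of Proposition \ref{prop:projectives}:
\[\Hom_\g(P_\lam,L(\mu))\cong \Hom_{\g_0}(P_0(\lam),\ker\px(L(\mu))).\]
By Lemma \ref{lem:kerpx=impx_simples}, or more precisely its proof, we have $\ker\px(L(\mu))=L_0(\mu)$ as a $\g_0$-module whenever $\mu$ has $a\neq 0$. This covers every weight in $\O_{odd}$, where $a$ is odd, and every weight in $\O_\kappa$, where $a\notin\Z$. Since $P_0(\lam)$ is the projective cover of $L_0(\lam)$ in $\O(\g_0)$, we get
\[\dim\Hom_{\g_0}(P_0(\lam),L_0(\mu))=\delta_{\lam\mu}.\]
Hence $\Hom_\g(P_\lam,L(\mu))$ is one-dimensional if $\mu=\lam$ and zero otherwise.

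Next, $P_\lam$ lies in $\O$, as noted after Proposition \ref{prop:projectives}, and it is finitely generated, by $1\otimes u_0$. So $P_\lam$ is nonzero and has a top. Take any simple quotient $L(\mu)$ of $P_\lam$; by the computation above, $\mu=\lam$. The top of $P_\lam$ is semisimple, so the Hom computation shows it is a single copy of $L(\lam)$. The projection $P_\lam\to L(\lam)$ is surjective with superfluous kernel, namely the radical of $P_\lam$. Standard uniqueness of projective covers in $\O$, which has enough projectives and finite length, then gives $P_\lam\cong P(\lam)$. Alternatively, one can argue directly: if $P_\lam=A\oplus B$ with both summands nonzero, each summand would have a simple quotient, giving a top of dimension at least $2$, a contradiction.

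The main obstacle is parity and grading bookkeeping. We must check that $\Hom_\g$ here means even homomorphisms in $\O_0$. We must also check that the identification $\ker\px(L(\mu))\cong L_0(\mu)$ matches parity conventions, so that no extra copy of $L_0(\mu)$, coming from $\Pi L(\mu')$ with $\mu'\neq\mu$, contributes. Both follow from the reduction to $\O_0$, where parity is determined by the $\del$-coefficient. The second also needs a comparison of highest weights: $L_0(\mu)\cong L_0(\lam)$ forces $\mu=\lam$.
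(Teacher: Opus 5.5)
Your proof is correct and follows the paper's argument. The paper uses the same adjunction $\Hom_\g(P_\lam,L(\mu))\cong\Hom_{\g_0}(P_0(\lam),\ker\px L(\mu))$ together with $\ker\px L(\mu)=L_0(\mu)$ from the proof of Lemma~\ref{lem:kerpx=impx_simples} to get $\C\delta_{\lam\mu}$; you additionally spell out the standard step (projective with simple top $L(\lam)$ implies projective cover) that the paper leaves implicit. The parity issue you call the main obstacle is harmless: in $\O_0$ parity is determined by the weight, so every $\g$-homomorphism between such modules is automatically even.
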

\begin{proof}
    As in the proof of Proposition \ref{prop:projectives}, we have \[\Hom_\g(P(\lam),L(\mu))\cong \Hom_{\g_0}(P_0(\lam), \ker\px(L(\mu))).\]
    But $\ker\px(L(\lam))\cong L_0(\lam)$ as seen in the proof of Lemma \ref{lem:kerpx=impx_simples}. Hence, \[\Hom_{\g_0}(P_0(\lam), \ker\px L(\mu)))\cong \Hom_{\g_0}(P_0(\lam), L_0(\mu))\cong\C\delta_{\lam\mu}.\]

\end{proof}

\begin{rmk}\label{rmk:OkOdd}
    Let $\lam = a\eps+b\del$ be a weight in $\O_\kappa$ or $\O_{odd}$. We now know that $P(\lam)=\Ind_{\g_{\geq0}}^\g P_0(\lam)$ and $\Del(\lam)=\Ind_{\g_{\geq 0}}^\g\Del_0(\lam)$. Moreover, from the theory of $\O(\sl_2)$ we have $P_0(\lam)=\Del_0(\lam)$ when $a\not\in\{-2,-3,-4,\dots\}$. Hence, $P(\lam)=\Del(\lam)$ when $a\notin\{-2,-3,-4,\dots\}$.
\end{rmk}

\section{The endomorphism algebra of \texorpdfstring{$\O_{odd}$}{Oodd}}\label{sec:describing_A}
We now describe the endomorphism algebra \[\A = \bigoplus_{\mu,\lam\in\O_{odd}}\Hom_\O(P(\mu),P(\lam)).\] We will show that $\A\cong \C Q/I$, where $\C Q$ is the path algebra of the quiver $Q$ consisting of a degree one generating set of $\A$, and $I$ an ideal of quadratic relations. In the following, we let $\lam,\mu\in\O_{odd}$ unless stated otherwise.

\subsection{Computing dimensions}
We start by computing $\Hom_\g(P(\mu),P(\lam))=[P(\lam):L(\mu)]$. The following propositions will reduce this problem to one of studying the multiplicities $[\Del_0(\lam):L_0(\mu)]$. 

\begin{defn}
    Let $\lam = a\eps+b\del$ be a weight of $\g$. The \textit{dual} of $\lam$ is the weight  $\lam':=(-a-2)\eps+b\del$. For a module $M\in \O$ with weight space $M_\lam$, its \textit{dual weight space} is $M_{\lam'}$.
\end{defn}

\begin{lem}\label{lem:SES_projective_verma}
    Let $\lam=a\eps+b\del$ be a weight in $\O_{odd}$ with $a\leq -3$. We have a short exact sequence 
    \[\begin{tikzcd}
        0\arrow[r] &\Del(\lam') \arrow[r] & P(\lam)\arrow[r] & \Del(
        \lam)\arrow[r] & 0.
    \end{tikzcd}\]
\end{lem}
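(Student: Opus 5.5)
The plan is to produce the sequence on the $\g_0$ side, where everything reduces to $\O(\sl_2)$, and then apply the induction functor $\Ind_{\g_{\geq 0}}^\g = U(\g)\otimes_{U(\g_{\geq 0})}-$. By Proposition \ref{prop:projectives} and the proposition following it, $P(\lam)=\Ind_{\g_{\geq0}}^\g P_0(\lam)$. Also $\Del(\mu)=\Ind_{\g_{\geq0}}^\g\Del_0(\mu)$ for every weight $\mu$. In each case the $\g_0$-module is regarded as a $\g_{\geq0}$-module with $\g_1=\C\px$ acting by zero. Since $U(\g)$ is free as a right $U(\g_{\geq 0})$-module by PBW, with $U(\g)\cong U(\g_{-1})\otimes U(\g_{\geq0})$, the functor $\Ind_{\g_{\geq0}}^\g$ is exact. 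So a short exact sequence of $\g_{\geq0}$-modules
\[0\to\Del_0(\lam')\to P_0(\lam)\to\Del_0(\lam)\to0\]
with $\px$ acting trivially will induce the claimed sequence $0\to\Del(\lam')\to P(\lam)\to\Del(\lam)\to 0$.

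It therefore suffices to produce this $\g_0$-sequence. Write $\lam=a\eps+b\del$ with $a\leq -3$. By Remark \ref{rmk:g0_sl2}, we work in $\O(\g_0)_b\cong\O(\sl_2)$, and $\xi\px$ acts by the scalar $-b$ throughout. Since $a\leq-3$, the dot-reflection $s\cdot a=-a-2$ satisfies $-a-2\geq 1$, so it is a dominant integral weight and distinct from $a$. The standard theory of $\O(\sl_2)$ (BGG reciprocity together with the $\sl_2$ Verma embeddings) says the projective cover $P_0(a)$ of the antidominant simple $L_0(a)=\Del_0(a)$ has a Verma flag
\[0\to\Del_0(-a-2)\to P_0(a)\to\Del_0(a)\to 0.\]
Indeed, $[\Del_0(-a-2):L_0(a)]=1$, and reciprocity gives $(P_0(a):\Del_0(\mu))=[\Del_0(\mu):L_0(a)]$, which is nonzero exactly for $\mu\in\{a,-a-2\}$. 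The top Verma is $\Del_0(a)$, the quotient through which $P_0(a)\to L_0(a)$ factors. Restoring the $\del$-coefficient $b$ gives exactly $\lam'=(-a-2)\eps+b\del$, and this is the sequence needed.

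The main point to check is that inducing from $\g_{\geq0}$ is legitimate, which means confirming that the $\g_{\geq0}$-structures agree. On all three modules we declare $\px$ to act by zero. A $\g_0$-module map is then automatically a $\g_{\geq0}$-map, since $[\px,\g_0]\subseteq\g_1$ is compatible with a zero action: $\px$ acting as zero is consistent because $\g_1$ is an ideal of $\g_{\geq 0}$. So the $\g_0$-sequence is a sequence of $\g_{\geq0}$-modules.

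The other point is whether the statement requires $\lam$ to be in $\O_{odd}$ for the middle term to be $P(\lam)$. This is where the hypothesis is used, via the projectivity result Proposition \ref{prop:projectives}. Note that $\lam'$ has odd $\eps$-coefficient and the same $b$. It need not lie in the same block in the sense of Theorem \ref{thm:blocks}, but the statement only needs $\Del(\lam')$, which is defined for any weight. So no obstacle arises there, and exactness of induction is the only nontrivial input.
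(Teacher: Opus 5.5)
Your proposal is correct and follows the paper's proof: you take the $\O(\sl_2)$ sequence $0\to\Del_0(\lam')\to P_0(\lam)\to\Del_0(\lam)\to 0$, inflate it to $\g_{\geq0}$, and apply the exact functor $U(\g)\otimes_{U(\g_{\geq0})}-$, which is exact because PBW makes $U(\g)$ free over $U(\g_{\geq0})$; you also supply details the paper omits, namely BGG reciprocity and the inflation via the ideal $\g_1$. One small correction to your closing aside: $\lam'$ always lies in $\O_{odd}$, because $b+\frac{a-1}{2}$ and $b+\frac{-a-3}{2}$ differ by the even number $a+1$, but as you say this plays no role in the argument.
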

\begin{proof}
In $\O(\sl_2)$, we have the short exact sequence
\[\begin{tikzcd}
        0\arrow[r] &\Del_0(\lam') \arrow[r] & P_0(\lam)\arrow[r] & \Del_0(
        \lam)\arrow[r] & 0.
    \end{tikzcd}\]
    Hitting this exact sequence with $U(\g)\otimes_{U(\g_{\geq 0})}-$ yields the desired exact sequence, since $U(\g)$ is a flat $U(\g_{\geq 0})$ module by the PBW theorem. 
\end{proof}

\begin{prop}\label{prop:dim_step3}
    Let $\lam=a\eps+b\delta$ be a weight in $\O_{odd}$. Then we have the following cases for $[P(\lam):L(\mu)]$.
    \begin{enumerate}[label=(\roman*)]
        \item If $a\geq -1$ then $[P(\lam):L(\mu)]=[\Del(\lam):L(\mu)]$.
        \item If $a\leq -3$ then \[[P(\lam):L(\mu)]=[\Del(\lam):L(\mu)]+[\Del(\lam'):L(\mu)].\] 
    \end{enumerate}
\end{prop}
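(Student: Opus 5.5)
The argument splits according to whether $P(\lam)$ coincides with a Verma module. In each case we reduce to Remark \ref{rmk:OkOdd} or Lemma \ref{lem:SES_projective_verma} and then use additivity of composition multiplicities on short exact sequences in $\O$. The first step is to note that in $\O_{odd}$ the $\eps$-coefficient $a$ of any weight is odd, since all highest weights in Theorem \ref{thm:blocks} have the form $(2n+1)\eps+\cdots$. So for $\lam$ in $\O_{odd}$ we have exactly two possibilities: $a\geq -1$ or $a\leq -3$. The value $a=-2$ never occurs, so the two cases of the proposition exhaust all weights.

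\emph{Case (i), $a\geq -1$.} Here $a\notin\{-2,-3,-4,\dots\}$, so Remark \ref{rmk:OkOdd} gives $P(\lam)=\Del(\lam)$. Hence $[P(\lam):L(\mu)]=[\Del(\lam):L(\mu)]$ for every $\mu$, and there is nothing further to prove. The only input is the $\sl_2$ fact, via Remark \ref{rmk:g0_sl2}, that $P_0(\lam)=\Del_0(\lam)$ when $a$ is not $\leq -2$, together with the identification $P(\lam)=\Ind_{\g_{\geq0}}^\g P_0(\lam)$.

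\emph{Case (ii), $a\leq -3$.} Lemma \ref{lem:SES_projective_verma} provides the short exact sequence
\[0\to\Del(\lam')\to P(\lam)\to\Del(\lam)\to 0.\]
Composition multiplicities are additive on short exact sequences of modules of finite length. More generally, one can use the weight-space formulation: $[M:L(\mu)]$ is well defined for every module in $\O$ and additive, by the usual truncation argument at a fixed weight. This gives $[P(\lam):L(\mu)]=[\Del(\lam):L(\mu)]+[\Del(\lam'):L(\mu)]$.

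We should check that $\lam'=(-a-2)\eps+b\del$ is a sensible weight, i.e. that $\Del(\lam')$ makes sense as written. This is automatic, because Verma modules exist for any weight. Its composition factors lie in the same block, since $\Del(\lam')$ is a submodule of $P(\lam)\in\O_{odd}$.

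The only mildly delicate point is the justification of additivity of $[\,\cdot:L(\mu)]$ in $\O$, where modules need not have finite length a priori. I would handle it in one of two ways. The first is to note that the modules here have finite length: $\Del(\lam)=\Ind\Del_0(\lam)$ is filtered by $\g_0$-Vermas tensored with the finite-dimensional $\Lambda(\g_{-1})$, and each such piece has finite length in $\O(\g_0)$. The second is the standard argument via finite filtrations near a given weight. Either route makes the equality immediate.
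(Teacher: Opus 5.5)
Your proposal is correct and matches the paper's proof. Part (i) follows from Remark \ref{rmk:OkOdd}, since odd $a\geq -1$ lies outside $\{-2,-3,\dots\}$, and part (ii) follows from the short exact sequence of Lemma \ref{lem:SES_projective_verma} together with additivity of composition multiplicities. Your extra remarks are also sound but go beyond what the paper writes: the parity of $a$ ruling out $a=-2$, and finite length of $\Del(\lam)$ via the $\g_0$-Verma flag of $\Lambda(\g_{-1})\otimes\Del_0(\lam)$, are points the paper leaves implicit.
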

\begin{proof}
    Part $(i)$ is immediate from Remark \ref{rmk:OkOdd}. Part $(ii)$ follows from Lemma \ref{lem:SES_projective_verma}.
\end{proof}

\begin{lem}\label{lem:ses_lem}
    We have 
    \[
    [V:L_0(\mu)]=\sum_\nu[V:L(\nu)][L(\nu):L_0(\mu)]  
    \] 
    for any $V\in\O$. 
\end{lem}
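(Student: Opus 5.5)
The plan is to argue by induction on the length of a composition series of $V$ in $\O$, using that both sides of the identity are additive on short exact sequences. Here $[V:L_0(\mu)]$ means the multiplicity of $L_0(\mu)$ in $\Res^\g_{\g_0}V$, which lies in $\O(\g_0)$ by Lemma \ref{lem:catO_catOg0}. Since we only use finite-length modules in what follows (Verma modules and the projectives $P(\lam)$), I will assume $V$ has finite length. If needed, the general case is handled by the standard truncation to a weight $\mu$: only finitely many composition factors contribute to a fixed weight space.

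The base case is length one, $V=L(\nu_0)$. The left-hand side is then $[L(\nu_0):L_0(\mu)]$. On the right-hand side, $[V:L(\nu)]=\delta_{\nu\nu_0}$, so the sum collapses to the same number, and the identity holds trivially.

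For the inductive step, choose a maximal submodule $N\subset V$, which gives a short exact sequence $0\to N\to V\to L\to 0$ in $\O$ with $L=L(\nu_1)$ simple.
\begin{enumerate}[label=(\alph*)]
    \item Restriction $\Res^\g_{\g_0}$ is exact, since it does not change the underlying vector spaces or maps. The restricted sequence $0\to N\to V\to L\to 0$ is therefore exact in $\O(\g_0)$.
    \item Multiplicities in $\O(\g_0)$ are additive on short exact sequences, because $\O(\g_0)$ is a direct sum of copies of $\O(\sl_2)$ by Remark \ref{rmk:g0_sl2}, where modules have finite length and Jordan--Hölder applies. This gives $[V:L_0(\mu)]=[N:L_0(\mu)]+[L:L_0(\mu)]$.
    \item In $\O$, Jordan--Hölder gives $[V:L(\nu)]=[N:L(\nu)]+\delta_{\nu\nu_1}$.
\end{enumerate}
Apply the induction hypothesis to $N$ and the base case to $L$, then add the two identities. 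The right-hand sides combine to $\sum_\nu [V:L(\nu)][L(\nu):L_0(\mu)]$, using (c), and the left-hand sides combine to $[V:L_0(\mu)]$, using (b). This proves the statement.

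The only point that needs care is that the sum over $\nu$ is well defined, which holds because only finitely many $\nu$ have $[V:L(\nu)]\neq 0$. Similarly, each $[L(\nu):L_0(\mu)]$ is finite. In fact, for $\nu=a\eps+b\del$ with $a\neq0$, the proof of Lemma \ref{lem:kerpx=impx_simples} shows this multiplicity is $\delta_{\mu\nu}+\delta_{\mu,\nu-\del}$. If $V$ were allowed to have infinite length, the main obstacle would instead be the truncation argument. In that case I would apply the argument to a finite filtration that is exact on the weight space $V_\mu$, in the standard way multiplicities are defined in category $\O$.
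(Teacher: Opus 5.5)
Your proposal is correct and is essentially the paper's argument: the paper refines a $\g$-composition series of $V$ into a $\g_0$-composition series and counts factors, and your induction on length via additivity on short exact sequences is the same argument. The one superfluous part is the hedge about infinite length: by Lemma \ref{lem:catO_catOg0} and Remark \ref{rmk:g0_sl2}, $\Res^\g_{\g_0}V$ has finite length, and every chain of $\g$-submodules of $V$ is also a chain of $\g_0$-submodules, so every $V\in\O$ has finite length and no truncation argument is needed.
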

\begin{proof}
    Take a composition series of $V\in\O(\g)$ and then extend it to a composition series in $\O(\g_0)$. 
\end{proof}

\begin{prop}\label{prop:dim_step2}
    If $\mu=a\eps+b\del$ where $a\neq 0$, then \[[\Del(\lam):L(\mu)]+[\Del(\lam):L(\mu+\del)]=[\Del(\lam):L_0(\mu)].
    \]
\end{prop}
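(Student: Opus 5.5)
Apply Lemma \ref{lem:ses_lem} with $V=\Del(\lam)$. This gives
\[[\Del(\lam):L_0(\mu)]=\sum_\nu[\Del(\lam):L(\nu)]\,[L(\nu):L_0(\mu)],\]
so the statement reduces to computing the $\g_0$-multiplicities $[L(\nu):L_0(\mu)]$ for every $\nu$ that can occur as a composition factor of $\Del(\lam)$.

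The key input is the $\g_0$-decomposition from \cite{KC-Tak} quoted in the proof of Lemma \ref{lem:kerpx=impx_simples}. If $\nu=c\eps+d\del$ with $c\neq 0$, then $L(\nu)\cong L_0(\nu)\oplus L_0(\nu-\del)$ as $\g_0$-modules. Each $L_0$ is simple, and $L_0(\nu)\cong L_0(\mu)$ exactly when $\nu=\mu$. Hence for $c\neq 0$,
\[[L(\nu):L_0(\mu)]=\delta_{\nu,\mu}+\delta_{\nu-\del,\mu}.\]
This is nonzero only for $\nu=\mu$ or $\nu=\mu+\del$. Both of these have $\eps$-coefficient $a\neq 0$, so the formula applies to them. (The two cases cannot coincide, since $\del\neq 0$.)

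It remains to deal with composition factors $L(\nu)$ with $\eps$-coefficient $0$. Here I would argue that these cannot contribute. Every composition factor of $\Del(\lam)$ lies in the same block as $\lam$. When $\lam$ is in $\O_{odd}$ (or in some $\O_\kappa$), Theorem \ref{thm:blocks} and the decomposition \eqref{eq:full_block_decomp} show that all highest weights in that block have odd (respectively non-integral) $\eps$-coefficient. So every composition factor has nonzero $\eps$-coefficient.

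Alternatively, one can avoid block theory altogether. If $\nu=0\eps+d\del$, then by the remark after Corollary \ref{cor:kerpx=imgpx_all_O} we have $L(\nu)\cong L_0(\nu)$. This contains $L_0(\mu)$ only if $\mu=\nu$, which is impossible because $a\neq 0$. Either route shows that these $\nu$ contribute nothing.

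Substituting into the sum then gives
\[[\Del(\lam):L_0(\mu)]=[\Del(\lam):L(\mu)]+[\Del(\lam):L(\mu+\del)].\]
The only step needing care is the middle one. One has to justify that the decomposition $L(\nu)\cong L_0(\nu)\oplus L_0(\nu-\del)$ holds for every relevant $\nu$ with nonzero $\eps$-coefficient, and also handle the $\eps$-coefficient-zero factors. I expect the second alternative above, via the remark, to be the cleanest way to do the latter.
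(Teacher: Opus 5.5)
Your proof is correct and is the paper's argument: Lemma \ref{lem:ses_lem} applied to $V=\Del(\lam)$, together with the $\g_0$-decomposition $L(\nu)\cong L_0(\nu)\oplus L_0(\nu-\del)$ for every $\nu$ with nonzero $\eps$-coefficient. Your separate treatment of factors with $\eps$-coefficient $0$ (where $L(\nu)\cong L_0(\nu)\not\cong L_0(\mu)$) covers a detail the paper's one-line proof skips, and your second route proves the identity without any block assumption on $\lam$.
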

\begin{proof}
    As in the proof of Lemma \ref{lem:kerpx=impx_simples}, $L(\lam)\cong L_0(\lam)\oplus L_0(\lam-\del)$ as $\g_0$-modules and so $[L(\nu):L_0(\mu)]=1$ if $\nu=\mu,\mu+\del$ and is $0$ otherwise. The proposition now follows by Lemma \ref{lem:ses_lem}. 
\end{proof}

\begin{lem}\label{lem:technical_for_last_prop_dims}
    Let $M,N\in\O$. If $\Dim M_\lam=\Dim N_\lam$ for all $\lam\in\h^*$, then $[M:L]=[N:L]$ for all simple modules $L\in\O$. 
\end{lem}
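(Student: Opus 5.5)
We will prove the statement by showing that the characters of $M$ and $N$ determine their composition multiplicities, using the usual triangularity of characters of simple modules in $\O$. Write $\mathrm{ch}\, M=\sum_\lam \Dim M_\lam\, e^\lam$ for the formal character; the hypothesis says exactly that $\mathrm{ch}\, M=\mathrm{ch}\, N$. Characters are additive on short exact sequences, so taking composition series in $\O$ gives
\[\mathrm{ch}\, M=\sum_{L}[M:L]\,\mathrm{ch}\, L,\qquad \mathrm{ch}\, N=\sum_L [N:L]\,\mathrm{ch}\, L.\]
Every module in $\O$ has finite length: modules in $\O(\g_0)$ have finite length, each $L(\nu)$ restricts to a nonzero $\g_0$-module by Lemma~\ref{lem:catO_catOg0}, and a $\g$-composition series refines to a $\g_0$-one as in Lemma~\ref{lem:ses_lem}. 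Hence both sums are finite.

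It therefore suffices to show that the characters $\mathrm{ch}\, L(\nu)$, as $\nu$ ranges over highest weights, are linearly independent in the finite sense. Subtracting the two expansions, we must show that $\sum_\nu c_\nu\, \mathrm{ch}\, L(\nu)=0$ with finitely many nonzero integers $c_\nu$ forces all $c_\nu=0$. Suppose not, and choose $\nu_0$ maximal, in the partial order $\mu\le\nu$ iff $\nu-\mu$ is a sum of positive roots, among those $\nu$ with $c_\nu\neq 0$. Since $L(\nu)$ is a quotient of $\Del(\nu)$, its weights all lie in $\nu-\mathbb{Z}_{\ge0}\Phi_+$ (using the PBW basis, whose elements lower weight by roots in $\Phi_-$), and $\Dim L(\nu)_\nu=1$. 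Thus $e^{\nu_0}$ appears only in $\mathrm{ch}\, L(\nu_0)$ among the terms with $c_\nu\ne0$, since any other $\nu$ with $\nu_0\le\nu$ would contradict maximality. Its coefficient is $c_{\nu_0}$, which must then be $0$, a contradiction.

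The one point needing care is that $\le$ really is a partial order, i.e.\ that no nontrivial nonnegative combination of roots in $\Phi_+$ vanishes. The positive roots $2\eps$ and $\del$ are linearly independent, so this is immediate, and all weights of $\Del(\nu)$ lie below $\nu$ because $\n^-$ has weights $-2\eps,\,-2\eps-\del,\,-\del,\,2\eps-\del$. Each of these is a nonpositive combination of $2\eps$ and $\del$; for instance, $2\eps-\del$ is lower in the $\del$-coordinate. Strictly, one should use a refined order with respect to which these are negative. The natural choice is to declare $\mu\le\nu$ iff $\nu-\mu\in\mathbb{Z}_{\ge0}\{2\eps,\del,2\eps+\del,\del-2\eps\}$. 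I expect this to be the only subtle step, namely verifying antisymmetry of this order; I would check it via the linear functional $\lam\mapsto$ ($-$coefficient of $\del$) $+\,\epsilon\cdot$(coefficient of $\eps$) for small $\epsilon>0$, which is strictly positive on all four generators.
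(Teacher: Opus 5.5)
Your strategy is the standard character-triangularity argument, which is exactly what the paper outsources to Musson, so the route is the same. However, two steps fail as written and need repair.

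First, the order coming from $\Phi_+=\{2\eps,\del\}$ genuinely does not work here. The element $x_-=x\otimes\xi$ has weight $2\eps-\del$, which is not a nonpositive combination of $2\eps$ and $\del$, so that sentence of yours is false. Consequently $x_-\otimes w\in\Del(\nu)$ has weight $\nu+2\eps-\del\notin\nu-\Z_{\geq0}\{2\eps,\del\}$. The cone generated by $2\eps,\ \del,\ 2\eps+\del,\ \del-2\eps$ is needed from the outset, not as an afterthought.

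Second, the functional you use to prove this cone is pointed has a sign error. With your $\epsilon$ renamed $t$, the map $a\eps+b\del\mapsto -b+ta$ takes the value $-1$ on $\del$ and the values $-1\pm 2t$ on $\del\pm2\eps$. Use $\ell(a\eps+b\del)=b+ta$ with $0<t<\tfrac12$ instead; it equals $2t,\,1,\,1+2t,\,1-2t$ on the four generators. You only ever evaluate $\ell$ on differences $\nu-\mu$, which lie in the integral lattice, so complex $\eps$-coefficients of the weights themselves cause no trouble. The same positivity shows that no nontrivial PBW monomial has weight $0$. That is what actually justifies your unproved claim $\Dim L(\nu)_\nu=\Dim\Del(\nu)_\nu=1$.

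There are also two smaller points. Finite length should be argued without presupposing a $\g$-composition series. A strictly increasing chain of $\g$-submodules is also one of $\g_0$-submodules, so its length is bounded by the $\g_0$-length.

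Your identification of the simples of $\O$ with the $L(\nu)$, one per weight, tacitly uses the paper's convention $\O=\O_0$, where parity is fixed by the $\del$-coefficient. In the larger category containing $\Pi L(\nu)$, the modules $L(\nu)$ and $\Pi L(\nu)$ have identical weight-space dimensions but different multiplicities, so the lemma would fail there.
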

\begin{proof}
    A standard result in category $\O$ (see \cite{Musson}).
\end{proof}

Let $\lam=a\eps+b\delta$ for $a,b\in\Z$ and define $\lam_{ijk}=\lam+i(2\eps-\delta)-j\delta-k(2\eps+\delta)$ for $i,j,k=0,1$. Note that $\lam_{ijk}$ is the weight of $y_-^kh_-^jx_-^iv_\lam$, where $v_\lam$ has weight $\lam$.

\begin{prop}\label{prop:dim_step1}
    We have \[ [\Del(\lam):L_0(\mu)]=\sum_{i,j,k=0}^1[\Del_0(\lam_{ijk}):L_0(\mu)].\]
\end{prop}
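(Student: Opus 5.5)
The plan is to compare weight-space dimensions and then apply Lemma \ref{lem:technical_for_last_prop_dims}, in its $\g_0$-version. That lemma holds equally in $\O(\g_0)$: equal characters give equal composition multiplicities, for the same standard reason. Concretely, I will show that the restriction $\Res^\g_{\g_0}\Del(\lam)$ has the same character as the direct sum
\[
\bigoplus_{i,j,k=0}^1\Del_0(\lam_{ijk}).
\]
Since composition multiplicities are additive over direct sums, the formula follows.

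The character comparison comes straight from the PBW basis. The PBW basis of $\Del(\lam)$ is
\[
\{y^n y_-^{k}h_-^{j}x_-^{i}\otimes w\},
\]
with $n\in\N$ and $i,j,k\in\{0,1\}$; this is the basis recorded earlier in the paper, just reindexed. The vector $y^n y_-^kh_-^jx_-^i\otimes w$ has weight $\lam_{ijk}-2n\eps$. So for each fixed triple $(i,j,k)$, the vectors with $n$ ranging over $\N$ contribute exactly one dimension in each weight $\lam_{ijk}-2n\eps$. These are precisely the weights of $\Del_0(\lam_{ijk})$, each with multiplicity one, since $\Del_0(\nu)$ has basis $\{y^n v_\nu\}$ by Remark \ref{rmk:g0_sl2}. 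Summing over the eight triples gives
\[
\Dim \Del(\lam)_\nu=\sum_{i,j,k}\Dim\Del_0(\lam_{ijk})_\nu \quad\text{for all }\nu.
\]

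The remaining step is to pass from equal characters to equal multiplicities. Both modules lie in $\O(\g_0)$: for $\Del(\lam)$ this is Lemma \ref{lem:catO_catOg0}, and $\Del_0(\lam_{ijk})$ is a $\g_0$-Verma module. Characters in $\O(\g_0)$ determine composition multiplicities, so I get
\[
[\Del(\lam):L_0(\mu)]=\sum_{i,j,k}[\Del_0(\lam_{ijk}):L_0(\mu)].
\]

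The only point needing care is this last implication. I would justify it in one of two ways. The first is to note that the characters of the simples $L_0(\mu)$ are linearly independent: they are unitriangular with respect to the Verma characters. The second, more concretely, is to filter $\Res\Del(\lam)$ as a $\g_0$-module by the $\g_0$-stable subspaces $U(\n_0^-)\Lambda^{\geq r}(\g_{-1})\otimes w$. Here $\g_0$ preserves the PBW degree in $\g_{-1}$ up to lower terms, because $[\g_0,\g_{-1}]\subseteq\g_{-1}$. The subquotients of this filtration are $\Del_0(\lam_{ijk})$, and that gives the statement directly.
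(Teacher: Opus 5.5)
Your proof is correct and follows the paper's argument. The PBW basis gives equal weight-space dimensions for $\Res_{\g_0}\Del(\lam)$ and $\bigoplus_{i,j,k}\Del_0(\lam_{ijk})$, and Lemma \ref{lem:technical_for_last_prop_dims}, which you rightly note is used here in its $\g_0$-version, turns this into equality of multiplicities; you have simply written out the check the paper calls straightforward.

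One minor inaccuracy is in your optional second justification. The exterior-degree layers are $\Lambda^r(\g_{-1})\otimes\Del_0(\lam)$ for $r=0,\dots,3$; since $\g_0$ preserves exterior degree exactly, not just up to lower terms, this is in fact a direct sum. For $r=1,2$ these layers are not Verma modules; they only have Verma flags with subquotients the corresponding $\Del_0(\lam_{ijk})$. So that filtration must be refined before its subquotients are the eight $\Del_0(\lam_{ijk})$.
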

\begin{proof}
   It is straightforward to check that the weight spaces of $\Del(\lam)$ and $\bigoplus_{i,j,k=0}^1\Del_0(\lam_{ijk})$ have the same dimension. The result then follows from Lemma \ref{lem:technical_for_last_prop_dims}.
\end{proof}

We start by computing $[\Del(\lam):L_0(\mu)]$ using Proposition \ref{prop:dim_step1}, which is really an exercise in $\O(\sl_2)$. Then we apply Propositions \ref{prop:dim_step2} and \ref{prop:dim_step3} to these numbers to arrive at $[P(\lam):L(\mu)]$. 

When $a\geq 2$, the coefficient of $\eps$ in $\lam_{ijk}$ is always non-negative. For $a<-2$, the same coefficient is always negative. Moreover, we have the following composition series in $\O(\g_0)$.

\begin{enumerate}[label=(\roman*)]
    \item If $a\in\N$: $\Delta_0(a\eps+b\delta)\supset L_0((-a-2)\eps+b\delta)\supset 0$ is a composition series.
    \item If $a\not\in\N$: $\Delta_0(a\eps+b\delta)\supset 0$ is a composition series. 
\end{enumerate}

 By considering the first composition series and employing Proposition \ref{prop:dim_step1}, we construct Table \ref{tab:comp-srs1} which enumerates the choices of $\mu$ that make $[\Delta(\lam):L_0(\mu)]$ non-zero for $a\geq2$. Note that the choices of $\mu=\lam_{ijk}$ always work regardless of the sign of $a$, we call such choices \textit{trivial}. In the case where a trivial choice has coefficient of $\eps$ non-negative, we can also choose the dual of this trivial choice. Using Table \ref{tab:comp-srs1}, we see that $[\Delta(\lam):L_0(\mu)]\leq 1$ when $a\geq 2$. 
\begin{table}[ht]
    \centering
    \begin{tabular}{c|c|c}
      $(i,j,k)$ & $\lam_{ijk}$ & $\lam_{ijk}'$ \\
      \hline
      $(0,0,0)$ & $a\eps+b\delta$          & $(-a-2)\eps+b\delta$\\
      $(1,0,0)$ & $(a+2)\eps+(b-1)\del$     & $(-a-4)\eps+(b-1)\del$\\
      $(0,1,0)$ & $a\eps+(b-1)\del$         & $(-a-2)\eps+(b-1)\del$\\
      $(0,0,1)$ & $(a-2)\eps+(b-1)\del$     & $-a\eps +(b-1)\del$\\
      $(1,1,0)$ & $(a+2)\eps+(b-2)\del$     & $(-a-4)\eps+(b-2)\del$\\
      $(1,0,1)$ & $a\eps+(b-2)\del$         & $(-a-2)\eps+(b-2)\del$\\
      $(0,1,1)$ & $(a-2)\eps + (b-2)\del$   & $-a\eps+(b-2)\del$\\
      $(1,1,1)$ & $a\eps+(b-3)\del$         & $(-a-2)\eps+(b-3)\del$
    \end{tabular}
    \caption{Choices of $\mu$ that make $[\Delta(\lam):L_0(\mu)]$ non-zero for $a\geq 2$. }
    \label{tab:comp-srs1}
\end{table}

If $a<-2$, then the coefficient of $\eps$ in $\lam_{ijk}$ will always be strictly negative. Hence, only the trivial choices $\mu=\lam_{ijk}$ make $[\Delta(\lam):L_0(\mu)]$ non-zero. Combining these observations gives $[\Res_{\g_0}^\g\Delta(\lam):L_0(\lam)]\leq 1$ for $a\neq -2,-1,0,1$, with equality for appropriate choices of $\mu$ outlined above. Finally, we check the edge cases of $a=-2,-1,0,1$. We can edit Table \ref{tab:comp-srs1} to get Table \ref{tab:comp-edge} which holds for such $a$. Note that dual choices are omitted if the coefficient of $\eps$ in the corresponding trivial choice is always negative.

\begin{table}[ht]
    \centering
    \begin{tabular}{c|c|c}
      $(i,j,k)$ & $\lam_{ijk}$ & $\lam_{ijk}'$ \\
      \hline
      $(0,0,0)$ & $a\eps+b\delta$          & $(-a-2)\eps+b\delta \ *$\\
      $(1,0,0)$ & $(a+2)\eps+(b-1)\del$     & $(-a-4)\eps+(b-1)\del$\\
      $(0,1,0)$ & $a\eps+(b-1)\del$         & $(-a-2)\eps+(b-1)\del \ *$\\
      $(0,0,1)$ & $(a-2)\eps+(b-1)\del$     & - \\
      $(1,1,0)$ & $(a+2)\eps+(b-2)\del$     & $(-a-4)\eps+(b-2)\del$\\
      $(1,0,1)$ & $a\eps+(b-2)\del$         & $(-a-2)\eps+(b-2)\del \ *$\\
      $(0,1,1)$ & $(a-2)\eps + (b-2)\del$   & - \\
      $(1,1,1)$ & $a\eps+(b-3)\del$         & $(-a-2)\eps+(b-3)\del \ *$
    \end{tabular}
    \caption{Amended table \ref{tab:comp-srs1} for $a=-2,-1,0,1$. The $*$ choices are only valid if $a=0,1$.}
    \label{tab:comp-edge}
\end{table}

Note that the trivial and dual (when they exist) choices of rows $(0,0,0)$ and $(1,1,1)$ in Table \ref{tab:comp-edge} will always give $[\Delta(\lam):L_0(\mu)]= 1$ by considering the coefficient of $\delta$. When $a=-2,-1,0,1$, all choices in Table \ref{tab:comp-edge} give $[\Delta(\lam):L_0(\mu)]= 1$, except the following cases:

\begin{enumerate}[label=(\roman*)]
    \item If $a=-2$ or $a=0$, taking \[\mu=-2\eps+(b-1)\delta \text{ or } \mu=-2\eps+(b-2)\delta\] makes $[\Delta(\lam):L_0(\mu)]=2$. 
    \item If $a=-3$, taking \[\mu=-3\eps+(b-1)\delta \text{ or } \mu=-3\eps+(b-2)\delta\] makes $[\Delta(\lam):L_0(\mu)]=2$.
   
\end{enumerate}

\begin{prop}\label{prop:dim_counts}
    Let $\lam=a\eps+b\del$ be a weight in $\O_{odd}$. The composition multiplicities $[P(\lam):L(\mu)]$ are given as follows. 
    \begin{enumerate}
        \item If $a\geq 3$: \[
        [P(\lam):L(\mu)]=
        \begin{cases}
            1 & \text{if } \mu=a_e\eps+(b-n)\del, \  n=0,2 \text{ or }  \mu=a_o\eps+(b-1)\del,\\
            0 & \text{otherwise. }
        \end{cases}\]
        Here $a_e\in\{a,-a-2\}$ and $a_o\in\{a+2,a-2,-a-4,-a\}$.
        \item If $a=1$: The composition multiplicity is identical to the $a\geq 3$ case, but we only allow $a_o\in\{a+2,a-2,-a-4\}$.
        \item If $a=-1$: 
        \[
            [P(\lam):L(\mu)]=
        \begin{cases}
            2 & \text{if } \mu=-3\eps+(b-1)\del,\\
            1 & \text{if } \mu=-\eps+(b-n)\del, \  n=0,2 \text{ or }  \mu=\eps+(b-1)\del,\\
            0 & \text{otherwise. }
        \end{cases}
        \]
        Note that $-3=a-2=-a-4$. 
        \item If $a=-3$: The composition multiplicities are identical to the $a\leq -5$ case, but we require $a_o=-a$.
        \item If $a\leq -5$:  \[
        [P(\lam):L(\mu)]=
        \begin{cases}
            2 & \text{if } \mu=a\eps+(b-n)\del, \  n=0,2 \text{ or } \mu=(a\pm2)\eps+(b-1)\del,\\
            1 & \text{if } \ \mu=(-a-2)\eps+(b-n)\del,\   n=0,2  \text{ or } \mu=a_o\eps+(b-1)\del,\\
            0 & \text{otherwise.}
        \end{cases}\]
        Here $a_o\in\{-a,-a-4\}$. 
    \end{enumerate}
\end{prop}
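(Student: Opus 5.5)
The plan is to run the three-step reduction set up above in reverse: first compute $[\Del(\lam):L_0(\mu)]$ via Proposition \ref{prop:dim_step1}, then convert these to $[\Del(\lam):L(\mu)]$ via Proposition \ref{prop:dim_step2}, and finally assemble $[P(\lam):L(\mu)]$ via Proposition \ref{prop:dim_step3}. Since $\lam\in\O_{odd}$, $a$ is odd, so the edge cases in Table \ref{tab:comp-edge} reduce to $a=-1,1$, together with the special multiplicity-two phenomenon at $a=-3$ noted above.

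\textbf{Step 1: the $\g_0$-multiplicities.} For each odd $a$, read off from Table \ref{tab:comp-srs1} or Table \ref{tab:comp-edge} the multiset of $\mu$ with $[\Del(\lam):L_0(\mu)]\neq 0$. These are the eight trivial weights $\lam_{ijk}$, plus their duals exactly when the $\eps$-coefficient of $\lam_{ijk}$ is non-negative. For example:
\begin{itemize}
\item for $a\geq 3$, every trivial weight has a dual;
\item for $a=1$, the weights $\lam_{001},\lam_{011}$ have $\eps$-coefficient $-1$, so they have no dual;
\item for $a=-1$, only $\lam_{100},\lam_{110}$ (coefficient $1$) have duals, namely $-3\eps+(b-1)\del$ and $-3\eps+(b-2)\del$, and these coincide with $\lam_{001},\lam_{011}$, giving multiplicity $2$;
\item for $a\leq -3$, only the trivial weights occur, and for $a=-3$ there are coincidences $\lam_{100}$ versus $\lam_{001}$ ($-\eps$ versus $-5\eps$, which are distinct), so the $a=-3$ special case must be recomputed carefully.
\end{itemize}

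\textbf{Step 2: pass to $\g$-simples.} The key observation is that for a simple $L(\nu)$ with $\nu=c\eps+d\del$, $c\neq 0$, the $\g_0$-constituents are $L_0(\nu)$ and $L_0(\nu-\del)$. The $\g_0$-composition factors of $\Del(\lam)$ (listed by $\delta$-coefficient $b,b-1,b-2,b-3$) must therefore be paired off as $\{L_0(\nu),L_0(\nu-\del)\}$. I would do this by a downward induction on the $\del$-coefficient using Proposition \ref{prop:dim_step2}: start at the top $\del$-level $b$, where $[\Del(\lam):L(\mu+\del)]=0$. This forces $[\Del(\lam):L(\mu)]=[\Del(\lam):L_0(\mu)]$ there, and then subtracting gives the next level.

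For $a\geq 3$ the outcome should be that $\Del(\lam)$ has $\g$-factors $L(\lam)$, $L(\lam')$, and the four weights $(a\pm2)\eps+(b-1)\del$, $(-a-4)\eps+(b-1)\del$, $-a\eps+(b-1)\del$. The levels $b-2,b-3$ are absorbed as $\del$-shifts of those at levels $b-1,b-2$. This requires the multiset at level $b-2$ to equal the multiset at level $b-1$ minus the $\del$-shift of level $b$, and so on. I expect this consistency check, together with the claim that the remaining $L(\mu)$ at level $b-2$ are exactly $L(\lam-2\del)$ and $L(\lam'-2\del)$, to be the main bookkeeping obstacle.

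\textbf{Step 3: assemble.} For $a\geq -1$, Proposition \ref{prop:dim_step3}(i) gives $[P(\lam):L(\mu)]=[\Del(\lam):L(\mu)]$, which yields cases (1)–(3); in case (3) the doubled $-3\eps+(b-1)\del$ comes from the coincidence found in Step 1. For $a\leq -3$, apply Proposition \ref{prop:dim_step3}(ii), adding $[\Del(\lam'):L(\mu)]$, where $\lam'=(-a-2)\eps+b\del$ has $-a-2\geq 1$ and so falls under the already computed cases (1) or (2).

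Summing the two lists produces the entries of case (5):
\begin{itemize}
\item multiplicity $2$ at $a\eps+(b-n)\del$ for $n=0,2$, and at $(a\pm2)\eps+(b-1)\del$;
\item multiplicity $1$ at the duals.
\end{itemize}
At $a=-3$ we have $\lam'=\eps+b\del$ with $a'=1$, so the missing $a_o$ value from case (2) explains the restriction $a_o=-a$ in case (4).
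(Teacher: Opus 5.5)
Your proposal is correct and follows essentially the paper's argument: compute $[\Del(\lam):L_0(\mu)]$ via Proposition~\ref{prop:dim_step1}, peel off the $\g$-simples level by level with Proposition~\ref{prop:dim_step2}, and, when $a\leq -3$, add the contribution of $\Del(\lam')$ (already covered by cases (1)--(2)) via Proposition~\ref{prop:dim_step3}; the only cosmetic difference is that you run the recursion downward from the top $\del$-level $b$, whereas the paper runs it upward from the empty level $b-4$. Your Step~1 is also right that for $a=-3$ all eight $\g_0$-multiplicities equal $1$ and the doubled ones occur at $a=-1$ (the paper's ``$a=-3$'' in its list of exceptions is a slip), so no separate multiplicity-two analysis is needed at $a=-3$.
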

\begin{proof}
    To illustrate the computation, take the example $\lam=-3\eps+b\del$. Proposition \ref{prop:dim_step3} tells us that $[P(\lam):L(\mu)]=[\Del(\lam):L(\mu)]+[\Del(\lam'):L(\mu)]$. We can compute $[\Del(\lam):L(\mu)]$ and $[\Del(\lam'):L(\mu)]$ using Proposition \ref{prop:dim_step2} and our descriptions of $[\Del(\lam),L_0(\mu)]$.
    
    From our previous discussion, $[\Del(\lam):L_0(\mu)]$ is equal to one for $\mu=\lam_{ijk}$ and zero otherwise. 
\begin{table}[ht]
    \centering
    \begin{tabular}{c|c}
      $(i,j,k)$ & $\lam_{ijk}$  \\
      \hline
      $(0,0,0)$ & $-3\eps+b\delta$         \\
      $(1,0,0)$ & $-\eps+(b-1)\del$     \\
      $(0,1,0)$ & $-3\eps+(b-1)\del$        \\
      $(0,0,1)$ & $-5\eps+(b-1)\del$     \\
      $(1,1,0)$ & $-\eps+(b-2)\del$     \\
      $(1,0,1)$ & $-3\eps+(b-2)\del$         \\
      $(0,1,1)$ & $-5\eps + (b-2)\del$    \\
      $(1,1,1)$ & $-3\eps+(b-3)\del$
    \end{tabular}
    \caption{The $\lam_{ijk}$ in the case $a=-3$.}
    \label{tab:comp_a=-3}
\end{table}
    Consider weights in Table \ref{tab:comp_a=-3} of the form $\mu_n=-3\eps+(\del-n)$, where $n=0,1,2,3$. We know that $c\eps+(b-4)\del$ is not a weight in $\Del(\lam)$ for any $c\in\C$. This means $[\Del(\lam):L(\mu_3-\del)]=0$. Hence, applying Proposition \ref{prop:dim_step2} with $\mu=\mu_3-\del$ and considering Table \ref{tab:comp_a=-3} gives $[\Del(\lam):L(\mu_3)]=0$. Using this, we can apply Proposition \ref{prop:dim_step2} again to get $[\Del(\lam):L(\mu_2)]+0=1$. By iterating this process two more times, one obtains that $[\Del(\lam):L(\mu_n)]=1$ if $n=0,2$ and zero otherwise. By applying this procedure to $\mu=-\eps+(b-m)\del$ and $\mu=-5\eps+(b-m)\del$ for $m=1,2$, we get $[\Del(\lam):L(\mu)]=1$ when \[\mu=-3\eps+(b-n)\del, \ -\eps+(b-1)\del, \ -5\eps+(b-1)\del\] with $n=0,2$, and zero otherwise. 
   
    The same process can be used to compute $[\Del(\lam'):L(\mu)]=1$ if $\mu$ is taken to be any of the following weights,
    \begin{align*}
        &\eps+(b-n)\del, \ \ -3\eps+(b-n)\del,\\
        -5\eps+&(b-1)\del, \ \ -\eps+(b-1)\del, \ \ 3\eps+(b-1)\del,
    \end{align*}
    where $n=0,2$, and is $0$ otherwise. 
\end{proof}

\subsection{Target vectors in \texorpdfstring{$P(\lam)$}{P(lam)=Del(lam)}}
Using the dimension computations of Proposition \ref{prop:dim_counts}, our next goal is to find a basis of $\Hom_{\O_{odd}}(P(\mu),P(\lam))$. We know that $P(\lam)$ is always generated by a single element. In particular, the morphisms we are trying to compute are defined uniquely by where they send this generator. We call non-zero vectors in $P(\lam)$ equal to the image of the generator of some $P(\mu)$ \textit{target} \textit{vectors}. 

\begin{rmk}
    We require, by convention, that target vectors be linearly independent.
\end{rmk}

We start by looking for target vectors in $P_0(\lam)$, and then bump up via adjunction. 

\begin{lem}\label{lem:proj_targets}
    Let $P$ be a projective module in $\O(\g_0)$. We have a canonical vector space isomorphism
    \[\Hom_{\g_0}(P_0(\mu),P)\cong \{v\in P_\mu : xyxv=0\}.\] 
\end{lem}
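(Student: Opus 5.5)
The plan is to use the evaluation map $\Phi:\Hom_{\g_0}(P_0(\mu),P)\to P_\mu$, $\phi\mapsto \phi(u_0)$, where $u_0$ is the generator of $P_0(\mu)$ of weight $\mu$. Since $P_0(\mu)$ is cyclic on $u_0$, $\Phi$ is injective. The claim then reduces to two statements: the image of $\Phi$ lies in $S_\mu(P):=\{v\in P_\mu : xyxv=0\}$, and $\dim S_\mu(P)=\dim\Hom_{\g_0}(P_0(\mu),P)$. I will treat $\xi\px$ as acting by a scalar throughout, so that by Remark \ref{rmk:g0_sl2} everything is a statement about $\O(\sl_2)$, with $\mu\leftrightarrow a\eps$.

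\emph{Image lies in $S_\mu(P)$.} It suffices to check $xyxu_0=0$ in $P_0(\mu)$ itself, because $\phi$ commutes with $x,y$.
\begin{enumerate}[label=(\roman*)]
\item If $a\notin\{-2,-3,\dots\}$, then $P_0(\mu)=\Del_0(\mu)$ and $xu_0=0$, so the relation holds trivially.
\item If $a\le -2$, I will use the exact sequence $0\to\Del_0(\mu')\to P_0(\mu)\to\Del_0(\mu)\to 0$ from the proof of Lemma \ref{lem:SES_projective_verma}.
\begin{enumerate}[label=(\alph*)]
\item The image of $u_0$ in $\Del_0(\mu)$ is killed by $x$, so $xu_0\in\Del_0(\mu')$, in weight $(a+2)\eps$.
\item If $a=-2$, then $\mu'=\mu$ and the weight $(a+2)\eps=0$ exceeds the highest weight $-2$ of $\Del_0(\mu')$, so $xu_0=0$ outright.
\item If $a\le -3$, the weight $(a+2)\eps$ lies strictly below the top $(-a-2)\eps$ of $\Del_0(\mu')$. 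There $yx$ acts by a scalar computed from the $\sl_2$ formulas, $x y^k v=k(\lambda-k+1)y^{k-1}v$. This forces $xyx u_0=c\,xu_0$ for a scalar $c$.
\end{enumerate}
This is the first place where an explicit calculation is needed. I expect $c=0$ once one uses that $xu_0$ is (up to scalar) $y^{-a-1}$ applied to the highest weight vector of $\Del_0(\mu')$. If instead $c\neq 0$, the statement would have to be read as the canonical identification after a normalisation of $u_0$, and I would recheck the relation against the standard presentation of $P_0(\mu)$.
\end{enumerate}

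\emph{Dimension count.} Every projective in $\O(\sl_2)$ is a finite direct sum of indecomposable projectives, namely Verma modules $\Del_0(c)$ with $c\notin\{-2,-3,\dots\}$ and big projectives $P_0(c)$ with $c\le -2$. Both $\Hom_{\g_0}(P_0(\mu),-)$ and $S_\mu(-)$ are additive, so it suffices to take $P$ indecomposable. In that case $\dim\Hom(P_0(\mu),P)=[P:L_0(\mu)]$, which is known from the composition series listed before Table \ref{tab:comp-srs1}. I would then compute $S_\mu(P)$ directly in explicit bases $\{y^kv\}$:
\begin{enumerate}[label=(\roman*)]
\item For a Verma module, $xyx$ acts on $y^kv$ by an explicit product of scalars. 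One reads off that the kernel in weight $\mu$ is $1$-dimensional exactly when $\mu$ is the top weight or the singular weight $-c-2$, and $0$ otherwise. Since $P_\mu$ is at most $1$-dimensional, this matches $[\Del_0(c):L_0(\mu)]$.
\item For a big projective $P_0(c)$, the weight space $P_\mu$ is at most $2$-dimensional and $xyx$ is a $2\times 2$ (or smaller) matrix on it. One checks that its kernel has dimension equal to $[P_0(c):L_0(\mu)]\in\{0,1,2\}$.
\end{enumerate}
Together with injectivity of $\Phi$ and the first step, equality of dimensions gives that $\Phi$ is an isomorphism onto $S_\mu(P)$.

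\emph{Main obstacle.} The delicate point is the big projective $P_0(c)$ at weights $\mu=c\eps$ and $\mu=(-c-2)\eps$, where the count is $2$. There $x$ does not act nilpotently in a single step, and the entries of the $2\times 2$ matrix depend on the gluing of $\Del_0(c)$ to $\Del_0(-c-2)$, i.e.\ on the nonsplit extension. I would handle this by realising $P_0(c)\cong \Del_0(0)\otimes L_0(-c-2)$ shifted appropriately, or by using the explicit basis from the translation functor, and computing $xyx$ there. The aim is to verify that $xyx$ vanishes on the whole $2$-dimensional space at the top weight $c$, and on exactly the right subspace at the other weights.
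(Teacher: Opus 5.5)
Your plan is the paper's argument: evaluation at the generator is injective, and one matches $\dim\Hom_{\g_0}(P_0(\mu),P)=[P:L_0(\mu)]$ against $\dim\{v\in P_\mu : xyxv=0\}$ on indecomposable projectives. You also usefully make explicit the check that $xyx$ kills the generator of $P_0(\mu)$, which the paper leaves implicit; that check works uniformly for all $a\le -2$, so your scalar $c$ is $0$, since $yxu_0$ is a multiple of $y^{-a-1}$ applied to the highest weight vector of $\Del_0(\mu')$, which is singular — but your separate case $a=-2$ is misstated ($\mu'=b\del\neq\mu$ there, and $xu_0\neq 0$), and in $P_0(c\eps)$ multiplicity $2$ occurs only at the weight $c\eps$, while the weight space at $(-c-2)\eps$ is one-dimensional.
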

\begin{proof}
    Suppose first that $P=P_0(\lam)$ is an indecomposable projective in $\O(\g_0)$. The linear map $\Psi:\Hom_{\g_0}(P_0(\mu),P_0(\lam))\to \{v\in P_0(\lam)_\mu : xyxv=0\}$ defined by $\Psi(\phi)=v_\phi$ where $v_\phi=\phi(v_\mu)$ ($v_\mu$ is a fixed generator of $P_0(\mu)$) is injective since $\phi\in \Hom_{\g_0}(P_0(\mu),P_0(\lam))$ is uniquely defined by where it sends the generator. 
    
    One can verify directly that the dimensions of both vector spaces in the lemma are equal for all $\lam$. Hence, $\Psi$ must be an isomorphism. This generalises to any projective module $P$ by writing it as a direct sum of indecomposable projectives. 
\end{proof}

\begin{cor}\label{cor:ID_targets}
    Let $\lam,\mu\in\O_{odd}$ Then, as vector spaces, we have
    \[\Hom_{\g}(P(\mu),P(\lam))\cong \{v\in P(\lam)_\mu : \px v=0\ \mathrm{and} \  xyxv=0\}.\] In particular, if $v\in P(\lam)_\mu$ satisfies $xyxv=\px v=0$ then the mapping $w\mapsto v$ uniquely defines a $\g$-modules homomorphism $\phi:P(\mu)\to P(\lam)$ (where $w$ is a fixed generator of $P(\mu)$). 
\end{cor}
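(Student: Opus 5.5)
The plan is to combine the induction adjunction used in Proposition \ref{prop:projectives} with Lemma \ref{lem:proj_targets}. Since $P(\mu)=\Ind_{\g_{\geq 0}}^\g P_0(\mu)$, Frobenius reciprocity gives
\[\Hom_\g(P(\mu),P(\lam))\cong\Hom_{\g_{\geq 0}}(P_0(\mu),\Res^\g_{\g_{\geq0}}P(\lam)).\]
As in the proof of Proposition \ref{prop:projectives}, $\g_1=\C\px$ acts by zero on $P_0(\mu)$. Therefore any $\g_{\geq0}$-map lands in $\ker\px(P(\lam))$, and we obtain
\[\Hom_\g(P(\mu),P(\lam))\cong\Hom_{\g_0}(P_0(\mu),\ker\px(P(\lam))).\]
Concretely, the isomorphism sends $\phi$ to the restriction of $\phi$ to $1\otimes P_0(\mu)$, and it is determined by the image of the generator $w=1\otimes u_0$.

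Next I want to apply Lemma \ref{lem:proj_targets} with $P=\ker\px(P(\lam))$, so I must check that this $\g_0$-module is projective in $\O(\g_0)$. There are two ways to see this.

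\emph{First argument.} For $N\in\O(\g_0)$, regarded as a $\g_{\geq0}$-module with $\px$ acting by zero, consider $\Hom_{\g_0}(\ker\px(P(\lam)),N)$. I would rather avoid that route and use instead that $\ker\px(P(\lam))=\img\px(P(\lam))$ by Corollary \ref{cor:kerpx=imgpx_all_O}.

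\emph{Second argument (the one I prefer).} By PBW, $P(\lam)\cong U(\g_{-1})\otimes P_0(\lam)\cong\Lambda(\g_{-1})\otimes P_0(\lam)$ as $\g_0$-modules. This is a tensor product of the finite-dimensional $\g_0$-module $\Lambda(\g_{-1})$ with a projective module, and such a tensor product is projective in $\O(\g_0)$. Moreover, $\ker\px$ is exact on $\O_{odd}$ by Lemma \ref{lem:exactnessofkerpx}. The exact sequence
\[0\to\ker\px(P(\lam))\to P(\lam)\xrightarrow{\px}\img\px(P(\lam))\to 0\]
together with $\img\px=\ker\px$ shows that $\ker\px(P(\lam))$ is a $\g_0$-summand situation. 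To justify this, note that $\Res P(\lam)$ has a $\g_0$-filtration by $\g_0$-modules induced along degrees in $\Lambda(\g_{-1})$.

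I expect this projectivity check to be the main obstacle. The cleanest fix I see is to observe that the proof of Lemma \ref{lem:proj_targets} really only needs a dimension equality. So rather than proving projectivity abstractly, I would argue directly that, for any $\g_0$-module $P$ in $\O(\g_0)$ that is a direct sum of indecomposable projectives, the map $\phi\mapsto\phi(u_0)$ is injective with image contained in $\{v\in P_\mu: xyxv=0\}$. Projectivity of $\ker\px(P(\lam))$ itself then follows because it is a $\g_0$-direct summand of the projective module $\Res P(\lam)$: since $\ker\px=\img\px$, the $\g_0$-map $\px$ identifies $P(\lam)/\ker\px$ with $\ker\px$, and the resulting sequence splits because its quotient term is $\ker\px$ itself.

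With projectivity in hand, Lemma \ref{lem:proj_targets} identifies $\Hom_{\g_0}(P_0(\mu),\ker\px(P(\lam)))$ with the set of $v\in\ker\px(P(\lam))_\mu$ satisfying $xyxv=0$. This set is exactly $\{v\in P(\lam)_\mu:\px v=0,\ xyxv=0\}$. Tracing the isomorphisms shows that the vector $v$ corresponds to the unique $\g$-homomorphism sending $w\mapsto v$, which gives the second sentence of the statement.
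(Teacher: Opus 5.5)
\textbf{The gap.} Your route applies Lemma \ref{lem:proj_targets} with $P=\ker\px(P(\lam))$, so you need $\ker\px(P(\lam))$ to be projective in $\O(\g_0)$. Your ``cleanest fix'' needs this too, since it again assumes $P$ is a direct sum of indecomposable projectives. Your justification is circular.

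You take $0\to\ker\px(P(\lam))\to P(\lam)\to P(\lam)/\ker\px(P(\lam))\to 0$, observe that $\px$ identifies the quotient with $\ker\px(P(\lam))$, and conclude that the sequence splits ``because its quotient term is $\ker\px$ itself''. An extension whose quotient is isomorphic to its submodule need not split. To split it you would need the quotient to be projective, which is exactly what you are trying to prove.

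Two smaller points. First, $\px$ is not a $\g_0$-map, because $[\px,\xi\px]=\px$; it intertwines the $\g_0$-actions only after twisting by a character of $\C\xi\px$ (harmless, but worth stating). Second, the remark about a filtration by degrees in $\Lambda(\g_{-1})$ does not produce a splitting, and your ``first argument'' is left unfinished.

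The projectivity claim is in fact true. On $\O_{odd}$ one has $\ker\px\cong\mathrm{coker}\,\px$ up to twist. The functor $M\mapsto M/\px M$ is left adjoint to the exact coinduction functor from $\g_{\geq 0}$-modules, so it preserves projectives. But this needs a genuine argument, and your proposal does not give one.

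\textbf{How to close it.} You do not need projectivity of $\ker\px(P(\lam))$ at all. You already showed that $\Res P(\lam)\cong\Lambda(\g_{-1})\otimes P_0(\lam)$ is projective in $\O(\g_0)$. The paper applies Lemma \ref{lem:proj_targets} to $P(\lam)$ itself. This identifies $\Hom_{\g_0}(P_0(\mu),P(\lam))$ with $\{v\in P(\lam)_\mu: xyxv=0\}$ via $\phi\mapsto\phi(u_0)$.

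Next, $\ker\px(P(\lam))$ is a $\g_0$-submodule. Indeed, $\px$ commutes with $\s$, and $\px(\xi\px)v=(\xi\px)\px v+\px v$. Since $P_0(\mu)$ is generated by $u_0$, a $\g_0$-map $\phi$ lands in $\ker\px(P(\lam))$ if and only if $\px\phi(u_0)=0$. Hence $\Hom_{\g_0}(P_0(\mu),\ker\px(P(\lam)))$ corresponds exactly to the subspace cut out by the extra condition $\px v=0$.

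Combined with your adjunction step, this proves the corollary, including the statement that $w\mapsto v$ determines a unique homomorphism.
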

\begin{proof}
    Below we use the notation $P(\lam)^{\px}:=\ker\px(P(\lam))$. Starting with adjunction, we have 
    \begin{align*}
        \Hom_\g(P(\mu),P(\lam))&\cong \Hom_{\g_{\geq 0}}(P_0(\mu),P(\lam))\\
        &=\Hom_{\g_0}(P_0(\mu),P(\lam)^{\px}),
    \end{align*}
    where the last equality holds since $\g_1$ kills $P_0(\mu)$ by definition. The point is that
    \begin{align*}
        \Hom_{\g_0}(P_0(\mu),P(\lam)^{\px})&=\{\phi\in\Hom_{\g_0}(P_0(\mu),P(\lam)):\phi(u_0)\in P(\lam)^{\px}\},       
    \end{align*}
    where $u_0\in P_0(\mu)$ is the generator. Now, $P(\lam)$ is a projective $\g_0$-module because $P(\lam)\cong U(\g_{-1})\otimes P_0(\lam)$ as $\g_0$-module (where the $\g_0$ action on $U(\g_{-1})$ is given by $X\cdot u= Xu-uX$) and $U(\g_{-1})$ is finite dimensional. Hence, we can apply Lemma \ref{lem:proj_targets} to $\Hom(P_0(\mu),P(\lam))$ and the result follows.

\end{proof}

Proposition \ref{prop:dim_counts} tells us the number and location of targets in $P(\lam)$ and Corollary \ref{cor:ID_targets} tells us the simultaneous equations we need to solve to compute them.

\subsection{Targets when \texorpdfstring{$P(\lam)=\Del(\lam)$}{P(lam)=Del(lam)}}
Let $\lam,\mu\in\O_{odd}$. We will find all target vectors when $P(\lam)=\Del(\lam)$ (which occurs when $a\not\in\{-2,-3,\dots\}$, as per Remark \ref{rmk:OkOdd}). By way of Corollary \ref{cor:ID_targets}, we identify target vectors in $\Del(\lam)$ to get a basis for $\Hom_{\O_{odd}}(P(\mu), \Del(\lam))$. Write $1\otimes w\in\Del(\lam)$ for the maximal vector. These simultaneous equations can be feasible solved by hand.

 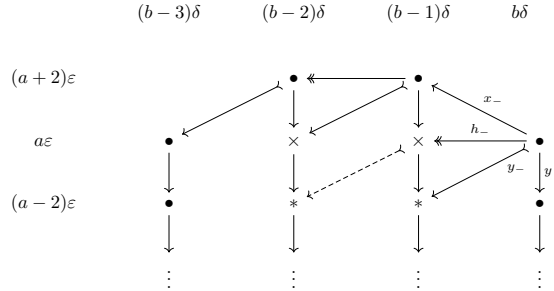
\begin{figure}[H]
        \centering
        \adjustbox{scale=.7,center}{%
        \begin{tikzcd}
            &  (b-3)\del  &  (b-2)\del & (b-1)\del & b\del \phantom{textt}  &\\
            (a+2) \eps & & \bullet \arrow[d] \arrow[dl, tail] & \bullet \arrow[d] \arrow[l, two heads] \arrow[dl, tail] &  & \\
            a \eps & \bullet \arrow[d] & \times \arrow[d] & \times \arrow[d] \arrow[dl,  dashed, tail]& \bullet \arrow[d, "y"] \arrow[ul, swap, "x_-"] \arrow[l, two heads, swap, "h_-"] \arrow[dl, tail, "y_-" near start]\\
               (a-2)\eps  & \bullet \arrow[d] & * \arrow[d] & *\arrow[d] & \bullet \arrow[d]\\
            & \vdots  & \vdots & \vdots &\vdots  & 
        \end{tikzcd}
        }
        \caption{Weight spaces represented by a $\bullet$ are one dimensional, by a $\times$ are two dimensional and by a $*$ are three dimensional.}
        \label{fig:target_vecs_verm_a>2}
    \end{figure}

Figure \ref{fig:target_vecs_verm_a>2} is a visualisation of the weight spaces of $\Del(\lam)$ and their dimensions. The paths starting at $a\eps+b\del$ in Figure \ref{fig:target_vecs_verm_a>2} almost correspond to the PBW basis of $\Del(\lam)$, except for the fact that $y_-yx_-\otimes w=yy_-x_-\otimes w$. To remedy this, the dotted line represents applying $y_-$ to $h_-\otimes w$, not $yx_-\otimes w$.

\begin{prop}\label{prop:targets_verm}
     For integer choices $a\geq 3$, there are eight target vectors in $\Del(\lam)$. The first four are
     \begin{enumerate}[label=$(\Alph*)$]
        \item $1\otimes w$
        \item $x_-\otimes w$
        \item $(-(a+2)y_-x_-+yh_-x_-)\otimes w$
        \item $(-a(a+1)y_-+(a+1)yh_-+y^2x_-)\otimes w$
     \end{enumerate}   
    The remaining four targets are equal to the above targets hit by $y^{\alpha+1}$, where $\alpha$ is the $\eps$-coefficient of the weight of target $(A),(B),(C)$ or $(D)$. Label these targets as $(X')$ (where $X=A,B,C,D$) to reflect that they lie in the dual weight spaces.
     
    If $a=1$, there are seven targets in $\Del(\lam)$, which are specified using the same targets as above (for $a\geq 3$), noting that $(D)=(D')$ in this case. 
    
    If $a=-1$, there are five targets in $\Del(\lam)$. The first three are given by $(A),(B),(C)$ in Proposition \ref{prop:targets_verm} by setting $a=-1$. The last two lie in the weight space $(D)$ and are 
    \begin{enumerate}[label=$(D_{\arabic*})$]
        \item $y^2x_-\otimes w$
        \item $(y_-+yh_-)\otimes w$
    \end{enumerate}
\end{prop}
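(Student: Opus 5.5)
The plan is to combine the count from Proposition \ref{prop:dim_counts} with the characterisation of target vectors in Corollary \ref{cor:ID_targets}. For $a\geq -1$ we have $P(\lam)=\Del(\lam)$ by Remark \ref{rmk:OkOdd}. So $\Dim\Hom(P(\mu),\Del(\lam))=[P(\lam):L(\mu)]$, and this number is read off Proposition \ref{prop:dim_counts}:
\begin{itemize}
\item for $a\geq 3$ there are eight nonzero multiplicities, each equal to $1$, at the weights $\lam$, $\lam_{100}$, $\lam_{101}$, $\lam_{001}$ and their duals;
\item for $a=1$ there are seven, because $\lam_{001}=-\eps+(b-1)\del$ coincides with its dual;
\item for $a=-1$ the multiplicities give $1+1+1+2=5$.
\end{itemize}
It therefore suffices to exhibit, in each listed weight space, the stated number of linearly independent vectors $v$ with $\px v=0$ and $xyxv=0$. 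Corollary \ref{cor:ID_targets} then says they span the corresponding hom space, and linear independence together with the dimension count gives the claim.

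The verification of $(A)$--$(D)$ is a direct computation in the PBW basis. I would use the relations $[\px,f\otimes\xi]=f$, $[x,y_-]=h_-$, $[x,h_-]=-2x_-$, $[x,x_-]=0$, together with the fact that $\px$ and $x$ kill $w$.
\begin{itemize}
\item $(A)$ is trivial.
\item For $(B)$, $\px x_-\otimes w=x\otimes w=0$, and $x x_-\otimes w=0$.
\item For $(C)$ and $(D)$, I would first compute $\px$ of each monomial, moving $\px$ to the right past the odd elements with sign changes. The coefficients $-(a+2)$, $-a(a+1)$ and $(a+1)$ are exactly what force cancellation once $hw=aw$ is used.
\item Next I would check $xv=0$ directly. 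This is stronger than $xyxv=0$, and I expect it to hold, since $(C)$ and $(D)$ lie in weights with nonnegative $\eps$-coefficient.
\end{itemize}
In fact I would rather verify that $(A)$--$(D)$ are $\b$-singular, i.e.\ killed by $x$ and $\px$. This makes them highest weight vectors of weight $\alpha\eps+\beta\del$ with $\alpha\geq 0$.

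For the primed targets, let $v$ be one of $(A)$--$(D)$ with $\eps$-weight $\alpha\geq0$. By $\sl_2$ theory, $xy^{\alpha+1}v=0$ because $v$ is $x$-singular of weight $\alpha$. Also $\px$ commutes with $y$, so $\px y^{\alpha+1}v=y^{\alpha+1}\px v=0$. Hence $y^{\alpha+1}v$ satisfies both conditions, it sits in the dual weight space, and it is nonzero because $\Del(\lam)$ is free over $U(\n^-)$. Linear independence is automatic, since all eight vectors lie in distinct weight spaces when $a\geq 3$.

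For $a=1$ the weight of $(D)$ is $-\eps+(b-1)\del$. Its dual is itself and $y^{0}(D)=(D)$, which leaves seven targets.

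For $a=-1$, $(A)$, $(B)$ and $(C)$ are handled as above. In the two-dimensional target space at $-3\eps+(b-1)\del$, I would check $\px$ and $xyx$ directly on $y^2x_-\otimes w$ and on $(y_-+yh_-)\otimes w$. For the first, $\px y^2x_-w=y^2xw=0$ and $xyx(y^2x_-w)$ vanishes because $xy^2x_-w$ lands in $x_-$ times something killed by $x$. The second is $(D)$ at $a=-1$. The vectors are clearly independent in the PBW basis.

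The main obstacle is the bookkeeping of signs and commutators in the $\px$- and $x$-computations for $(C)$ and $(D)$. The degenerate case $a=-1$ also needs care: there $y^2x_-\otimes w$ is not $x$-singular, so only the weaker condition $xyxv=0$ holds and it must be checked honestly. In the $a=-1$ case I also need to confirm that exactly two independent solutions exist, which follows from the multiplicity $2$ in Proposition \ref{prop:dim_counts}.
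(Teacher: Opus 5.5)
Your strategy is the paper's intended one. The paper only remarks that, given the multiplicities of Proposition \ref{prop:dim_counts}, the equations $\px v=0=xyxv$ of Corollary \ref{cor:ID_targets} can be solved by hand. Your treatment of $a\geq 3$ and $a=1$ carries this out correctly. In fact $x$ and $\px$ kill $(C)$ and $(D)$ identically in $a$: for $(D)$, $x(-a(a+1)y_-+(a+1)yh_-)\otimes w=-2(a+1)\,yx_-\otimes w$ cancels $xy^2x_-\otimes w=(2a+2)\,yx_-\otimes w$. So your $\mathfrak{sl}_2$ argument for the primed targets goes through.

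Your $a=-1$ paragraph, however, has the two vectors swapped. This leaves the one genuinely non-singular target unverified.

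\emph{$(D_1)$ is the singular one.} At $a=-1$ the coefficients $-a(a+1)$ and $a+1$ in $(D)$ vanish. So $(D)$ specializes to $y^2x_-\otimes w=(D_1)$, not to $(y_-+yh_-)\otimes w$. It is also $(B')$, since $x_-\otimes w$ is $x$-singular of $\eps$-weight $1$. Hence $(D_1)$ is $\mathfrak{b}$-singular: $xy^2x_-\otimes w=(2a+2)\,yx_-\otimes w=0$. Your claim that it is not $x$-singular is false. Your stated reason for $xyx(y^2x_-\otimes w)=0$ (``lands in $x_-$ times something killed by $x$'') is not a valid argument either. It vanishes simply because $x$ already kills $y^2x_-\otimes w$.

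\emph{$(D_2)$ is the one needing the weaker condition.} As written, you would be certifying $(D_2)$ via the computation for $(D)$, which does not apply to it. The direct check is:
\[
\px(y_-+yh_-)\otimes w=(1+a)\,y\otimes w=0,
\]
\[
x(y_-+yh_-)\otimes w=(1+a)\,h_-\otimes w-2\,yx_-\otimes w=-2\,yx_-\otimes w\neq 0,
\]
\[
xyx(y_-+yh_-)\otimes w=-2\,xy^2x_-\otimes w=-2(2a+2)\,yx_-\otimes w=0.
\]

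With this correction, your argument is complete. $(D_1)$ and $(D_2)$ are independent in the PBW basis, and the multiplicity $2$ from Proposition \ref{prop:dim_counts} finishes the $a=-1$ case.
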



\subsection{Targets when \texorpdfstring{$P(\lam)\neq\Del(\lam)$}{P(lam) not equal Del(lam)}}
We now find the target vectors when $P(\lam)\neq\Del(\lam)$. Some of the computations in this case are much more involved. To help with these computations, we wrote a program in \textit{Mathematica}. The inspiration for such a program came from the Stack Exchange post \cite{lamers_2018}, which outlines how to work with finitely generated algebras and relations in \textit{Mathematica}.

Let $v\in P_0(\lam)$ be the highest weight vector. As discussed in Remark \ref{rmk:g0_sl2}, we already know what $P_0(\lam)$ looks like thanks to our knowledge of $\O(\sl_2)$. Define the basis vectors of $P_0(\lam)$ to align with vectors shown in Figure \ref{fig:vecs_to_consider_in_P0}. 

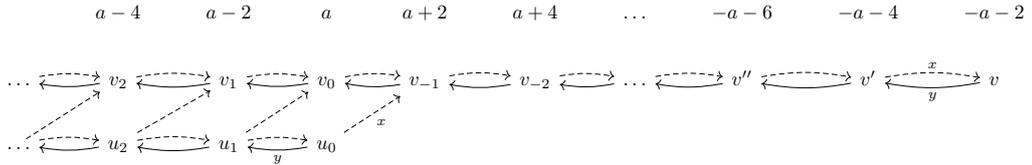
\begin{figure}[H]
\centering
    \adjustbox{scale=0.75,center}{%
    \begin{tikzcd}
        & & a-4 & a-2 &a & a+2 & a+4 & \hdots & -a-6 & -a-4 & -a-2\\
        & \hdots \arrow[r, bend left = 10, dashed] & v_2 \arrow[l, bend left = 10 ] \arrow[r, bend left = 10, dashed] & v_1  \arrow[l, bend left = 10 ] \arrow[r, bend left = 10, dashed] & v_0 \arrow[l, bend left = 10 ] \arrow[r, bend left = 10, dashed] & v_{-1} \arrow[l, bend left = 10 ] \arrow[r, bend left = 10, dashed] & v_{-2} \arrow[l, bend left = 10 ] \arrow[r, bend left = 10, dashed] & \hdots \arrow[l, bend left = 10 ] \arrow[r, bend left = 10, dashed] & v'' \arrow[l, bend left = 10 ] \arrow[r, bend left = 10, dashed] & v' \arrow[l, bend left = 10 ] \arrow[r, bend left = 10, dashed, "x"] & v \arrow[l, bend left = 10, "y"] \\
        & \hdots \arrow[r, bend left = 10, dashed] \arrow[ur, dashed] & u_2 \arrow[l, bend left = 10 ] \arrow[r, bend left = 10, dashed] \arrow[ur, dashed] & u_1 \arrow[l, bend left = 10 ] \arrow[r, bend left = 10, dashed] \arrow[ur, dashed] & u_0 \arrow[l, bend left = 10 , "y"] \arrow[ur, dashed, "x",swap] &  & 
    \end{tikzcd}
    }
    \caption{The vectors shown above in $P_0(\lam)$ for $a\leq -3$ are those that, upon tensoring with specific elements in $U(\g_{-1})$, lie in weight spaces of $P(\lam)$ known to contain target vectors. Note that we omit the component of each weight corresponding to the action of $\xi\px$ for simplicity.}
    \label{fig:vecs_to_consider_in_P0}
\end{figure}

\begin{prop}\label{prop:targets_non_verma}
    Let $\lam=a\eps+b\del\in\O_{odd}$ with $a\leq-5$. There are twelve target vectors in $P(\lam)$ which we list below.
    \begin{enumerate}[label=$(A_{\arabic*})$]
        \item $1\otimes v_0$ 
        \item $1\otimes u_0$
    \end{enumerate}
     \begin{enumerate}[label=$(B_{\arabic*})$]
        \item $x_-\otimes v_0$
        \item $- y_-\otimes v_{-2}+h_-\otimes v_{-1}-(a+1)x_-\otimes u_0$
    \end{enumerate}  
    \begin{enumerate}[label=$(C_{\arabic*})$]
        \item $-ay_-x_-\otimes v_0+h_-x_-\otimes v_1$ 
        \item $y_-h_-\otimes v_{-1}-ay_-x_-\otimes u_0+h_-x_-\otimes u_1$
    \end{enumerate}
    \begin{enumerate}[label=$(D_{\arabic*})$]
        \item $(a^2-a)y_-\otimes v_0-(a-1)h_-\otimes v_1-x_-\otimes v_2$ 
        \item $a(a^2-1)y_-\otimes u_0-(a^2-1)h_-\otimes u_1-(a+1)x_-\otimes u_2+(2a-1)y_-\otimes v_0-h_-\otimes v_1$
    \end{enumerate}
    There are also four targets in the dual weight spaces of the above targets.
    \begin{enumerate}[label=$(\Alph*)$]
        \item $1\otimes v$.
        \item $-(a+2)(a+3)y_-\otimes v-(a+3)h_-\otimes v'+x_-\otimes v''$.
        \item $h_-x_-\otimes v'+(a+2)y_-x_-\otimes v$.
        \item $x_-\otimes v$. 
    \end{enumerate}
    When $a=-3$, there are only eleven target vectors in $P(\lam)$. They are given by the above targets setting $a=-3$, where the target in $(B')$ becomes $x_-\otimes v_0$, so it equal to $(B_1)$.
\end{prop}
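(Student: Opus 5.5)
The proof splits into a counting part and a verification part. By Proposition \ref{prop:dim_counts}(5) for $a\leq -5$, summing $[P(\lam):L(\mu)]=\dim\Hom_\g(P(\mu),P(\lam))$ over the nonzero cases gives exactly twelve. The multiplicity $2$ occurs at $\mu=a\eps+(b-n)\del$ for $n=0,2$ and at $\mu=(a\pm 2)\eps+(b-1)\del$; these are the weights of $(A_i),(C_i),(D_i),(B_i)$. The multiplicity $1$ occurs at the four dual weights $\mu=(-a-2)\eps+(b-n)\del$ for $n=0,2$, and at $\mu=-a\eps+(b-1)\del$ and $\mu=(-a-4)\eps+(b-1)\del$; these are the weights of $(A'),(C'),(B'),(D')$. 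So it suffices to check three things: each listed vector lies in the claimed weight space $P(\lam)_\mu$, it satisfies $\px v=0$ and $xyxv=0$, and the vectors sharing a weight space are linearly independent. Corollary \ref{cor:ID_targets} then turns each listed vector into a homomorphism $P(\mu)\to P(\lam)$, and the dimension count shows these form a basis.

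\textbf{Weights and independence.} I use $P(\lam)\cong U(\g_{-1})\otimes P_0(\lam)$ with the basis from Figure \ref{fig:vecs_to_consider_in_P0}. Here $v_k=y^k v_0$ for $k\geq 0$, the vector $u_0$ spans the second copy at weight $a\eps$, and $v_{-k}$, $v'$, $v''$, $v$ lie toward the top of $P_0(\lam)$. The weight check is direct: for example, $y_-\otimes v_{-2}$, $h_-\otimes v_{-1}$ and $x_-\otimes u_0$ all have weight $(a+2)\eps+(b-1)\del$. Linear independence within each two-dimensional weight space is also immediate, because the $(\cdot)_2$ vectors contain PBW monomials involving $u_i$ or $v_{-i}$ that do not appear in the $(\cdot)_1$ vectors.

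\textbf{The two equations.} This is the real work. I would first fix explicit formulas for the $\sl_2$ action on $P_0(\lam)$. I would realise $P_0(\lam)$ as the big projective in a singular-free integral $\sl_2$ block, with $x v_0=0$ replaced by the relations $x u_0=v_{-1}$ and $y v_{-1}$ a combination of $v_0$ and $u_0$, where the constants are normalised by the structure of the self-dual projective. Next I compute the action of $\px$, $x$ and $y$ on $U(\g_{-1})\otimes P_0(\lam)$ using the brackets (a)–(g):
\begin{itemize}
\item $\px$ anticommutes past $x_-,h_-,y_-$, producing $x,h,y$ respectively, and it kills $1\otimes P_0(\lam)$;
\item $x,y$ act on the $\g_{-1}$ factors by the adjoint action and on $P_0(\lam)$ by the known action.
\end{itemize}
For each candidate I then apply $\px$ and $xyx$ and reduce to the PBW basis. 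The free coefficients in the ansatz (a general vector in the weight space) are fixed by requiring both outputs to vanish, and the listed coefficients such as $(a^2-a)$, $a(a^2-1)$ and $(2a-1)$ should come out as the solution. For a single monomial like $x_-\otimes v_0$ the check is quick: $\px x_-\otimes v_0=x v_0=0$, and $xyx$ acting on it reduces to $\sl_2$ computations.

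\textbf{Main obstacle and the case $a=-3$.} The hardest part is $(B_2)$, $(C_2)$ and $(D_2)$, because the $u$- and $v$-strands mix under $x$ and the expansions in the three-dimensional weight spaces are long. Here I would rely on the Mathematica program, or organise the computation via the reduction $\Hom_{\g_0}(P_0(\mu),\ker\px)$: first solve $\px v=0$, which is linear and cuts the dimension down, then impose $xyx v=0$. For $a=-3$, Proposition \ref{prop:dim_counts}(4) gives eleven targets. The same formulas specialise, and one checks that $-a-4=a+2$, so $(B')$ coincides with the $B_1$ weight space. There the three-term expression degenerates: the coefficient $(a+3)$ vanishes and $v''$ becomes $v_0$, leaving $x_-\otimes v_0=(B_1)$.
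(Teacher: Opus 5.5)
Your plan is the same as the paper's. The count of twelve (eleven when $a=-3$) comes from Proposition~\ref{prop:dim_counts}. Each listed vector is then checked, with computer help for $(B_2)$, $(C_2)$, $(D_2)$, to satisfy $\px v=0=xyxv$, and Corollary~\ref{cor:ID_targets} together with linear independence gives a basis.

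One correction to your description of $P_0(\lam)$. Since $v_{-1}$ lies in the Verma submodule $\Del_0(\lam')$, the vector $yv_{-1}$ has no $u_0$-component: with the normalisation $v_{k+1}=yv_k$, $u_{k+1}=yu_k$, $xu_0=v_{-1}$, one has $yv_{-1}=v_0$, and $xv_0=0$ still holds. The $u$/$v$ mixing enters instead through $xu_1=v_0+au_0$ and $xu_2=v_1+2(a-1)u_1$, and with these relations the listed coefficients check out exactly.
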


\subsection{The endomorphism algebra of \texorpdfstring{$\O_{odd}$}{Oodd}}

We will start by stating the final description of $\A$, and then explain how it was derived. 

Start by taking a subset of the $\Z\eps\times\Z\del$ lattice where we only consider pairs $(a,b)$ such that $a\eps+b\del$ is a weight in $\O_{odd}$. By drawing a directed edge from $\mu$ to $\lam$ for each dimension in $\Hom_\g(P(\mu),P(\lam))$, we have a quiver  $\overline{Q}$ such that the natural map from the path algebra $\C\overline{Q}\to\A$ is surjective. We will interchangeably use edge and morphism to refer to edges in $\overline{Q}$. This is unambiguous by specifying the basis of $\Hom_\g(P(\mu),P(\lam))$ as in Propositions \ref{prop:targets_verm} and \ref{prop:targets_non_verma}. So, each edge in $\overline{Q}$ corresponds to one such basis element in the respective homomorphism space. 

We will study $\A$ ``locally'', this will be enough to understand the whole algebra. Define the \textit{local picture centered at} $a\eps+b\del$ to be one of the following subquivers of $\overline{Q}$, depending on if $a\geq 3$, $a=1$ or $a=-1$ (Figures \ref{fig:local_pic_ageq3}, \ref{fig:local_pic_aeq1} and \ref{fig:local_pic_aeqm1} respectively).

\begin{figure}[ht]
    \centering
    \begin{subfigure}[b]{0.32\textwidth}
        \centering
        \adjustbox{scale=.55,center}{
            \begin{tikzcd}
                &     & b\phantom{textt}             & b+1               & b+2       \\
                &a+2  &                              &   \bullet \arrow[dr] \arrow[ddddddd, bend right=15]        &           \\
                &a    & \bullet\arrow[ur, "f"] \arrow[dr,swap, "g"] \arrow[ddddd, bend right=15, swap,  "p"]  &                   &\bullet \arrow[ddddd, bend right = 15]   \\
                &a-2  &           &   \bullet \arrow[ur] \arrow[ddd, bend right=15]       &           \\
                &  \vdots   &           &                   &           \\
                & \vdots    &           &                   &           \\
                &-a   &           &   \bullet\arrow[dr] \arrow[uuu, bend right = 15]         &           \\
                &-a-2 &   \bullet \arrow[ur, "g'"] \arrow[uuuuu, bend right = 15, "q", swap] \arrow[dr, swap, "f'"] &                   &   \bullet \arrow[uuuuu, bend right = 15] \\
                &-a-4 &           &   \bullet\arrow[ur] \arrow[uuuuuuu, bend right =15]         &           \\
            \end{tikzcd}
            }
        \caption{Local picture as $a\geq3$}
        \label{fig:local_pic_ageq3}
    \end{subfigure}
    \hfill 
    \begin{subfigure}[b]{0.32\textwidth}
        \centering
        \adjustbox{scale=.55,center}{
            \begin{tikzcd}
                &     & b\phantom{textt}             & b+1               & b+2       \\
                &3  &                              &   \bullet \arrow[dr] \arrow[dddd, bend right=20]        &           \\
                &1    & \bullet\arrow[ur, "f"] \arrow[dd, bend right=15, swap, "p"]  &                   &\bullet \arrow[dd, bend right = 15]   \\
                &-1  &           &   \bullet \arrow[dr]  &           \\
                &-3 &   \bullet \arrow[ur, "g'"] \arrow[uu, bend right = 15, swap, "q"] \arrow[dr, swap, "f'"] &                   &   \bullet \arrow[uu, bend right = 15] \\
                &-5 &           &   \bullet\arrow[ur] \arrow[uuuu, bend right =20]         &           \\
            \end{tikzcd}
            }
        \caption{Local picture at $a=1$}
        \label{fig:local_pic_aeq1}
    \end{subfigure}
    \hfill 
    \begin{subfigure}[b]{0.32\textwidth}
        \centering
        \adjustbox{scale=.55,center}{
  \begin{tikzcd}
        &     & b\phantom{textt}             & b+1               & b+2       \\
        &1    &  &    \bullet \arrow[dd, bend right = 15, swap]       &   \\
        &-1  &  \bullet \arrow[dr,swap,"f'"]         &    &        \bullet   \\
        &-3 &   &        \bullet  \arrow[ur,swap] \arrow[uu, bend right = 15, swap]        &    \\
    \end{tikzcd}
    }
        \caption{Local picture at $a=-1$}
        \label{fig:local_pic_aeqm1}
    \end{subfigure}
    \caption{Local pictures of $\A$}
    \label{fig:local_pics}
\end{figure}
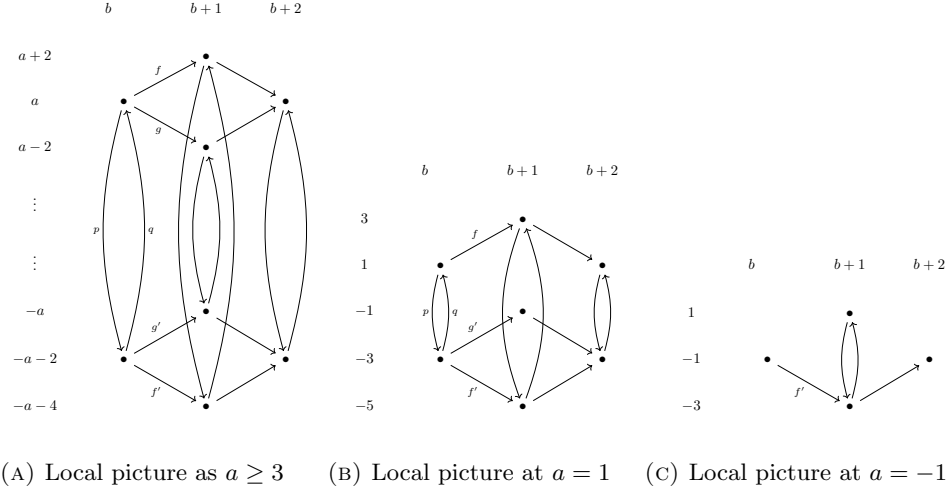

Notice the labeled edges and  note that $f,g,p,q$ are all unambiguous (up to scalar) as their corresponding homomorphism spaces are one dimensional. However, there is ambiguity for $g'$ and $f'$, as their corresponding homomorphism spaces are two dimensional, we will define $g'$ and $f'$ after we state the theorem.

In what follows, the order in which these labeled edges appear (when composing) will dictate which morphisms are being composed in the local picture. For example, if we look at the local picture when $a\geq 3$, $gf$ refers to the composition of morphisms $f = (a,b)\to (a+2,b+1)$ and $g = (a+2,b+1)\to (a,b+2)$. 

Define the sub-quiver $Q$ of $\overline{Q}$ to have vertex and edge sets equal to the union of the corresponding sets in the local pictures, including all identities $e_\lam : (a,b)\to(a,b)$ (corresponding to the identity morphisms).

\begin{theorem}\label{thm:relations_in_A}
    We have $\C Q/I\cong\A$, where $I$ is the ideal generated by the relations in Table \ref{tab:relations}.
    The `$a$' in the table below is in reference to the weight $\lam=a\eps+b\del$ where the local picture is centered. All relations have source $\lam$ or $\lam'$.
    \begin{table}[ht!]
    \caption{Relations between generators in $\A$.}
    
   \begin{center}
        \begin{tabular}{ c | c | c }\label{tab:relations}
         $a\geq 3$ & $a=1$ & $a=-1$ \\
         \hline 
        $f^2=g^2=f'^2=g'^2=qp=0$ & $f^2=f'^2=qp=0$ & $f'^2=qp=g'f'=0$  \\[.5em]   
         $fg+\frac{a-1}{a+3}gf=0$ & - & -\\[.5em] 
         $f'g'+\left(\frac{a-1}{a+3}\right)^2g'f'=0$ & $f'g'-\frac{1}{16}g'f'=0$ & - \\[.5em] 
         $f'p=(a+1)pf$ & $f'p=2pf$ & - \\[.5em] 
         $qf'=(a+3)fq$& $qf'=4fq$ & - \\[.5em]
         $g'p=(a+1)pg$& - & -\\[.5em] 
         $qg'=(a-1)gq$& - &  -
         
        \end{tabular}
    \end{center}
    \end{table}
\end{theorem}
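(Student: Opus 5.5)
We will prove the theorem in three stages: (1) the natural map $\C Q\to\A$ is surjective, (2) every relation in Table \ref{tab:relations} holds in $\A$, and (3) $\dim e_\mu(\C Q/I)e_\lam$ is at most $\dim\Hom_\g(P(\mu),P(\lam))$ as given by Proposition \ref{prop:dim_counts}. Together these force the induced surjection $\C Q/I\to\A$ to be an isomorphism.

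For surjectivity, we already know that $\C\overline{Q}\to\A$ is surjective. It therefore suffices to show that each edge of $\overline{Q}$ not lying in $Q$ is a linear combination of paths in $Q$. Every morphism $P(\mu)\to P(\lam)$ is determined by its target vector, by Corollary \ref{cor:ID_targets}. So we compose the generators concretely: a composite $P(\nu)\to P(\mu)\to P(\lam)$ sends the generator of $P(\nu)$ to the target vector $u\in P(\mu)$, rewritten with $P(\mu)$'s generator replaced by the target vector in $P(\lam)$. We then check against the lists in Propositions \ref{prop:targets_verm} and \ref{prop:targets_non_verma} that the following non-local targets arise from products of local generators:
\begin{itemize}
\item $(C)$ and $(D)$ as $gf$ and $fg$ type products;
\item the dual-weight targets $(X')$ as $p$ composed with local edges;
\item the second targets $(B_2),(C_2),(D_2)$ in $P(\lam)$ for $a\le -5$ via $q$, $g'$ and $f'$.
\end{itemize}
This is also where $f'$ and $g'$ are fixed, as specific combinations in their two-dimensional hom spaces chosen so that the relations take the stated form.

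To verify the relations, we compute each composite as a vector in the appropriate weight space of $P(\lam)$, using the PBW basis of $U(\g_{-1})\otimes P_0(\lam)$. We then compare the two sides.
\begin{itemize}
\item $f^2=g^2=qp=0$: these composites land in weight spaces where Proposition \ref{prop:dim_counts} gives multiplicity zero, or they produce a vector killed for degree reasons, for example $x_-^2=0$.
\item The commutation relations such as $fg+\frac{a-1}{a+3}gf=0$: we compute both composites explicitly. For $a\ge 3$, $fg$ and $gf$ both land in the one-dimensional target space $(D)$ of $\Del(\lam)$, so they are proportional. The scalar is read off from a single PBW coefficient; for example, comparing the $y^2x_-$ coefficients gives the ratio involving $(a+1)$ and $(a+3)$ from the $\sl_2$ action $xy^k$ on highest weight vectors.
\item $f'p=(a+1)pf$ and the other relations for $a\le -5$: the same computation, using the explicit vectors $v_i,u_i$ in $P_0(\lam)$.
\end{itemize}
These computations are where the Mathematica tool is needed.

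For the dimension bound, we fix a source and use the relations as rewriting rules. Each path of length two in $Q$ is either zero or can be brought to a normal form: for instance, $gf$ is preferred over $fg$, and $pf$ over $f'p$. We show that every longer path reduces either to zero or to one of a small list of normal-form paths from $\mu$ to $\lam$. Counting these normal forms, they match the multiplicities in Proposition \ref{prop:dim_counts}, namely $1$ or $2$ at each vertex of the local picture and $0$ elsewhere. The count must be carried out separately for the cases $a\ge3$, $a=1$, $a=-1$, $a=-3$ and $a\le-5$.

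The main obstacle is stage (3), together with the fine choice of $f',g'$ in stage (1). Local relations must be shown to close up globally: paths of length three or more that wander between neighbouring local pictures must vanish or reduce, for example $fgf$ must vanish in accordance with the weight-space dimensions. This overlap analysis also needs care at the boundary cases $a=\pm1,-3$, where the local pictures degenerate.
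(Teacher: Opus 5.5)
The three stages you propose match the paper's proof: surjectivity of $\C Q\to\A$ (Proposition \ref{prop:gens_downstairs}), the check $I\subseteq\ker\Phi|_{\C Q}$, and a spanning set of $\C Q/I$ (Lemma \ref{lem:linear_basis_of_A}) that matches Proposition \ref{prop:dim_counts}. The gap is the one step that is not routine, which you settle with ``chosen so that the relations take the stated form''.

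Your argument for the commutation relations is that both composites lie in a one-dimensional space and hence are proportional. This works for $fg$ versus $gf$. (For $a\geq 3$ they lie in the weight space of target $(C)$, i.e.\ $\lam-2\del$. Target $(D)$ is the degree-one generator $f$ itself, and the dual targets $(X')$ arise through $q$, not $p$.) The argument fails for $f'g'$ versus $g'f'$. These lie in $\Hom_\g(P(\lam'),P(\lam'+2\del))$, which has dimension $[P(\lam'+2\del):L(\lam')]=2$ when the centre has $a\geq 3$ or $a=1$. So proportionality is not automatic, and for the naive choice $f'=f_2$, $g'=g_2$ it is false. By Proposition \ref{prop:relns_downstairs}, the ratio of the $r_2$-coefficients of $f_2g_2$ and $g_2f_2$ is $-\left(\frac{a-1}{a+3}\right)^2$, while the ratio of their $r_1$-coefficients is $-\frac{a-1}{a+3}$. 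This mismatch is the source of the inhomogeneous relation $f_2g_2+\gamma g_2f_2=\sigma\,pq\,g_2f_2$ noted in the paper. Likewise, at $a=-1$ one has $g_2f_2=-4r\neq 0$, so $g'f'=0$ fails.

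What is missing is an argument that a good choice exists. The available freedom is $f'=f_2+\beta f_1$ and $g'=g_2+\alpha g_1$, with $f_1,g_1$ of degree three (e.g.\ $f_1$ is a multiple of $pqf_2$). This preserves surjectivity and leaves the other relations intact (e.g.\ $qf_1=0$ since $qp=0$). Each $f'g'$-relation then becomes one linear condition on the coefficients of the four edges of that square.

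However, every $f'$- and $g'$-edge is a side of two neighbouring squares. For example, the $f'$-edge out of $-a\eps+(b+1)\del$ lies in the local pictures centred at $a\eps+b\del$ and at $(a-2)\eps+(b+1)\del$. So the coefficients cannot be tuned square by square. Indexing them by the $\eps$-coefficient of the target gives the difference equation $\beta_{a-2}-\beta_a-\alpha_a+\alpha_{a+2}=\frac{8}{(a-1)(a+3)}$. It is subject to boundary conditions from the pictures at $a=1$ and $a=-1$: with $g'=g_2$ into $\eps$-coefficient $-1$, the relation $g'f'=0$ forces $\beta_{-3}=-1$. The paper solves this by telescoping, with $\alpha_a=-\frac{2}{a+1}$ and $\beta_a=\frac{2}{a+1}$ (Lemma \ref{lem:diff_eq}).

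Without such a solution, you have not shown that any choice of generators makes $I$ the whole kernel. That is the real content of the theorem; the normal-form count you flag as the main obstacle is quick once the relations are in hand.
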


The relations in Theorem \ref{thm:relations_in_A} were computed either by hand or by using Mathematica for larger calculations.

We now define the morphisms $f'$ and $g'$ with source $\lam'$ for each local picture centered at $\lam$. Then we will prove that $\A\cong\C Q/I$. 

Let $f_1$ and $f_2$ be the maps induced by $(B_1)$ and $(B_2)$ in Proposition \ref{prop:targets_non_verma} respectively. Similarly, let $g_1$ and $g_2$ be the maps induced by $(D_1)$ and $(D_2)$ seen in the same proposition (note that if the target has $a=-1$, then $g_1,g_2$ are from $(D_1),(D_2)$ in Proposition \ref{prop:targets_verm}). 
\begin{prop}\label{prop:gens_downstairs}
    If we set $g'=g_2+\alpha g_1$ and $f'=f_2+\beta f_1$ for any $\alpha,\beta\in\C$, then $\C Q/\ker\Phi|_{\C Q}\cong\A$, where $\Phi|_{\C Q}$ is the restriction of the natural map $\Phi:\C\overline{Q}\to\A$ onto $\C Q$.
\end{prop}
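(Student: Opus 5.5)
The statement is equivalent to surjectivity of $\Phi|_{\C Q}:\C Q\to\A$: once it is onto, the first isomorphism theorem gives $\C Q/\ker\Phi|_{\C Q}\cong\A$. Since $\Phi:\C\overline{Q}\to\A$ is already surjective, I will show that every edge of $\overline{Q}$ not in $Q$, viewed as a basis morphism from Propositions \ref{prop:targets_verm} and \ref{prop:targets_non_verma}, lies in the image of $\C Q$. The image is a subalgebra, so it suffices to write each such basis vector of $\Hom_\g(P(\mu),P(\lam))$ as a linear combination of composites of edges of $Q$.

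The local pictures account for most edges. For a fixed target $\lam=a\eps+b\del$, Proposition \ref{prop:dim_counts} lists the nonzero spaces $\Hom(P(\mu),P(\lam))$: the sources $\mu$ are $a_e\eps+(b-n)\del$ with $n=0,2$, and $a_o\eps+(b-1)\del$. Edges with $\mu$ an immediate neighbour ($n=1$, i.e.\ $\eps$-coordinate $a\pm2$ or dual), and the vertical edges $p,q$ between $\lam$ and $\lam'$, appear directly in the local pictures centered at $\mu$ or $\mu'$. The edges not drawn are the length-two-in-$\del$ ones ($n=2$): $(a,b)\to(a,b+2)$, $(a,b)\to(-a-2,b+2)$ and the dual versions. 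I will express these as composites. For example, $(a,b)\to(a,b+2)$ should be $gf$ (or $fg$), which is nonzero by the relation $fg+\frac{a-1}{a+3}gf=0$. The diagonal $(a,b)\to(-a-2,b+2)$ should be $p$ composed with $gf$, or $q$ composed with $gf$, depending on direction. To check that these are nonzero, I will compose the target vectors explicitly. A composite $\psi\circ\phi$ sends the generator to $\phi$'s target vector with $U(\g)$ acting on $\psi$'s target. For instance, $gf$ corresponds to acting on target $(B)$ (i.e.\ $x_-\otimes w$) by the element producing $(D)$ of the intermediate weight. I then compare with the listed targets $(C)$, $(C')$, $(C_i)$ in the PBW basis of $U(\g_{-1})\otimes P_0(\lam)$.

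The two-dimensional spaces need more care. When $a\le -3$, $\Hom(P(\mu),P(\lam))$ has dimension two for $\mu=a\eps+(b-n)\del$ and $(a\pm2)\eps+(b-1)\del$. Here $Q$ contains only one of $f_1,f_2$ (through $f'=f_2+\beta f_1$), so the other direction must come from a composite through $\lam'$. My plan is to show that $pf$ (or $qf$-type composites through the $\sl_2$-dual vertex) has a nonzero $f_1$-component that is not proportional to $f'$. Then $\{f',\,pf\}$, after suitable reindexing, spans the two-dimensional space for every $\beta$; likewise $\{g',\,pg\}$ for $g_1,g_2$. Concretely, $pf$ maps the generator to $(B')$ or $(B)$ of $\Del(\lam')\subset P(\lam)$ (via Lemma \ref{lem:SES_projective_verma}), namely $x_-\otimes v$. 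Using $y^{-a-1}$-type relations, this should be a combination in which the $(B_1)$ coefficient is nonzero while the $f'$ direction carries the $(B_2)$ term $-(a+1)x_-\otimes u_0$. Because of the $u_0$ term, the two are linearly independent regardless of $\alpha,\beta$. For the two-dimensional $n=0,2$ spaces, $e_\lam$ and $qp$ — or, since $qp=0$, $pq$ — span $\End(P(\lam))$, and the $n=2$ ones follow by composing.

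The main obstacle is the explicit linear-independence checks in $P(\lam)$ for $a\le-3$, together with the edge cases $a=-3,-1,1$ where multiplicities collapse. For $a=-3$, $(B')=(B_1)$; for $a=-1$ the targets are $(D_1),(D_2)$ in a Verma module. For these cases I would verify each composite directly on the targets of Proposition \ref{prop:targets_non_verma}, as in the Mathematica computations, checking that the coefficient matrix of composites against the chosen basis has full rank.
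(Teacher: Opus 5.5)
Your reduction is the right one, and it is all the paper does. The statement is just surjectivity of $\Phi|_{\C Q}$, and the paper's proof is a direct (partly Mathematica) check that every basis morphism is a combination of paths in $Q$. However, your bookkeeping of which Hom spaces are reached by which paths is wrong in two linked places, so the plan as written does not cover $\A$.

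First, not every morphism with $n=1$ is an edge of $Q$. For $\lam=a\eps+b\del$ with $a\ge 3$, the only non-identity edges of $Q$ ending at $\lam$ are:
$f$ from $(a-2)\eps+(b-1)\del$, $g$ from $(a+2)\eps+(b-1)\del$, and $q$ from $\lam'$.
For $a\le -5$ they are:
$p$ from $\lam'$, $f'$ from $(a+2)\eps+(b-1)\del$, and $g'$ from $(a-2)\eps+(b-1)\del$.
The one-dimensional spaces with sources $(-a)\eps+(b-1)\del$ and $(-a-4)\eps+(b-1)\del$ (the ``dual'' values of $a_o$ in Proposition \ref{prop:dim_counts}) contain no edge of $Q$. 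They must be obtained as the degree-two composites $fq$, $gq$ (for $a\ge 3$), resp.\ $pg$, $pf$ (for $a\le -5$), and their non-vanishing has to be verified. Similarly, the relation $fg+\frac{a-1}{a+3}gf=0$ cannot show $gf\neq 0$; that must also come from the explicit composition.

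Second, because $pf$ and $pg$ are exactly these composites, they cannot supply the missing direction in the two-dimensional spaces $\Hom(P((a\pm2)\eps+(b-1)\del),P(\lam))$, $a\le -3$. Their sources are the positive vertices $(-a-4)\eps+(b-1)\del$ and $(-a)\eps+(b-1)\del$. The target of $pf$ is $(B')$ of Proposition \ref{prop:targets_non_verma}, and $x_-\otimes v$ is the target $(D')$ of $pg$. None of these lies in the weight space of $(B_1),(B_2)$ or of $(D_1),(D_2)$, apart from the coincidence $(B')=(B_1)$ at $a=-3$.

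The second direction is instead a degree-three path through the dual vertex, e.g.\ $pqf'$ and $pqg'$. The basis of Lemma \ref{lem:linear_basis_of_A} uses $pqf'$ and $g'pq$; at a target with $a=-1$ only $g'pq$ exists. In the weight space of $(C_1),(C_2)$ the second direction is the degree-four path $g'pqf'$.

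With this correction, your $u_0$ idea does give uniformity in $\alpha,\beta$:
\begin{itemize}
\item Any composite $p\circ h$ has target in $p(P(\lam'))=\Del(\lam')=U(\g_{-1})\otimes\Del_0(\lam')$, which involves only the $v_i$.
\item By contrast, $(B_2)$ and $(D_2)$ have nonzero $u$-components for $a\le -3$.
\item The maps $f_1,g_1$ themselves factor through $p$, and $qp=0$: it lies in $\mathrm{End}(P(\lam'))=\C$ and cannot be invertible, since $P(\lam)$ is indecomposable and $P(\lam')\not\cong P(\lam)$.
\end{itemize}
Hence $pqf'=pqf_2$ and $pqg'=pqg_2$ do not depend on $\beta,\alpha$. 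For these spaces it only remains to check $pqf_2\neq 0\neq pqg_2$ directly.
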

\begin{proof}
    Checked directly by hand in some cases and using Mathematica otherwise. 
\end{proof}

The goal now is to find $g',f'$ such that $\ker\Phi|_{\C Q}$ consists of purely quadratic relations (which will coincide with $I$, as defined in Theorem \ref{thm:relations_in_A}). Let $r$ be the morphism corresponding to $(C)$ in Proposition \ref{prop:targets_verm}. Let $r_1$ be the morphism induced by $(C_1)$ and $r_2$ be the morphism induced by $(C_2)$ in Proposition \ref{prop:targets_non_verma}.  The following results come directly from Mathematica.
\begin{prop}\label{prop:relns_downstairs}
    Consider the local picture centered at $a\eps+b\del$. When $a\geq 3$ we have, 
    \begin{enumerate}[label=(\alph*)]
        \item $f_2g_2=-(a-1)((a-1)(a+1)r_2+2r_1)$.
        \item $g_2f_2=(a+3)((a+3)(a+1)r_2+2r_1)$.
        \item $g_2f_1=-g_1f_2=(a+3)^2r_1$.
        \item $f_2g_1=-f_1g_2=(a-1)^2r_1$.
        \item $f_1g_1=0=g_1f_1$.
    \end{enumerate}
    When $a=1$, all the same relations hold except $(a)$ and $(d)$ which are replaced by
    \begin{enumerate}
            \item $f_2g_2=2r_2$.
            \item $f_1g_2=-f_2g_1=r_1$.
    \end{enumerate}
    When $a=-1$, we have the following relations:
    \begin{enumerate}
        \item $g_1f_1=0.$
        \item $g_2f_1=g_2f_2=-4r.$
        \item $g_1g_2=4r.$
    \end{enumerate}
\end{prop}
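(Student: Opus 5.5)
The plan is to compute each composite directly on target vectors, via Corollary \ref{cor:ID_targets}. A morphism $P(\mu)\to P(\lam)$ is determined by the image of the generator of $P(\mu)$, so the composite $\psi\circ\phi$ of $\phi:P(\nu)\to P(\mu)$ and $\psi:P(\mu)\to P(\lam)$ is determined as follows. Write the target vector of $\phi$ as $u\cdot w_\mu$ with $u\in U(\g)$, where $w_\mu$ generates $P(\mu)$. Then $\psi\circ\phi$ sends $w_\nu$ to $u\cdot t_\psi$, where $t_\psi\in P(\lam)_\mu$ is the target of $\psi$.

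In the local picture centred at $\lam=a\eps+b\del$, all the compositions $f_ig_j$ and $g_if_j$ are maps $P(\lam')\to P(\lam'-2\del)$ up to the indexing in the statement. Their values therefore lie in the two-dimensional space spanned by the targets $r_1,r_2$ of $(C_1),(C_2)$ in Proposition \ref{prop:targets_non_verma}, or by $r$ from Proposition \ref{prop:targets_verm} when $a=-1$. The steps are as follows.

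First, for each generator in the first factor, write its target vector as $u\cdot w$ with $u$ an explicit element of $U(\n^-)$. Here $w$ is the generator of the relevant $P(\mu)$, and $u$ is read off from Propositions \ref{prop:targets_verm} and \ref{prop:targets_non_verma}. For targets in a non-Verma $P(\mu)$, the vectors $v_i,u_i$ of Figure \ref{fig:vecs_to_consider_in_P0} must themselves be expressed as $\n_0^-$ and $\n_0^+$ words applied to $u_0$; for example, $v_0=xu_0$ up to normalisation, and $v_i=y^iv_0$.

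Second, apply $u$ to the target vector of the second factor. The result is an element of $U(\g_{-1})\otimes P_0(\lam)$. Put it in PBW normal form by using the bracket relations (c)--(g) to move $\g_0$ elements ($y,h,x$) past the odd elements $x_-,h_-,y_-$. Then apply the $\sl_2$ action inside $P_0(\lam)$, using the known structure of the projective (the big $\sl_2$ projective for $a\le -3$). Finally, expand in the basis $\{y_-^{\alpha_1}h_-^{\alpha_2}x_-^{\alpha_3}\otimes v_i,\ \ldots\otimes u_i\}$.

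Third, compare coefficients with $r_1$ and $r_2$ (or $r$). This identifies each composite as a specific linear combination. Because the target weight space is finite dimensional and both sides satisfy the kernel conditions of Corollary \ref{cor:ID_targets}, matching a couple of distinguished coefficients, such as those of $h_-x_-\otimes v_1$ and $y_-h_-\otimes v_{-1}$, suffices. Most cases vanish or are forced by weight considerations; for instance $f_1g_1=0$ because the relevant PBW monomial would require $x_-^2=0$.

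Fourth, treat the edge cases separately, since the tables degenerate there. At $a=1$ the dual of $(D)$ collapses, and at $a=-1$ the targets $(D_1),(D_2)$ replace $(D)$. Carry out the same computation, noting where the factors $(a-1)$ and $(a+1)$ vanish or change the form of the answer.

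The main obstacle is the second step in the non-Verma case. Normal-ordering products such as $(-y_-\otimes v_{-2}+h_-\otimes v_{-1}-(a+1)x_-\otimes u_0)$ hit by words from $(D_2)$ produces many terms. It also needs the precise $\sl_2$ structure constants of $P_0(\lam)$ linking $u_i$ and $v_i$, with $x u_0$ and $y v_0$ normalised consistently, and these coefficients are polynomial in $a$. I would do this symbolically in $a$, implementing the superbracket rewriting as a noncommutative reduction system (the Mathematica approach the paper describes), and then spot-check at small values such as $a=-5,-7$.
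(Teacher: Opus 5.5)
Your plan is the paper's own argument: the paper says only that these identities "come directly from Mathematica", which amounts to composing maps through their target vectors and normal-ordering in $U(\g_{-1})\otimes P_0$, exactly as you describe. Correct three slips before implementing it: $xu_0$ has weight $a+2$, so it is a nonzero multiple of $v_{-1}$ and $v_0$ is a multiple of $yxu_0$ (not of $xu_0$); the composites $f_ig_j,g_if_j$ lie in $\Hom_\g(P(\lam'),P(\lam'+2\del))$, not $\Hom_\g(P(\lam'),P(\lam'-2\del))$; and $f_1g_1=0=g_1f_1$ holds because $x$ annihilates $x_-\otimes v_0$ and $(D_1)$, not because of $x_-^2=0$.
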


When $a\geq3$, if we took $f'=f_2$ and $g'=g_2$, a non-quadratic relation of the form $f_2g_2+\gamma g_2f_2=\sigma pqg_2f_2$ can be derived and cannot be reduced to a quadratic relation. However, wonderfully (for the claim of Koszulity), there exists a choice of $\alpha$ and $\beta$ in Proposition \ref{prop:gens_downstairs} such that $f'g'+cg'f'=0$ for a scalar $c$.

\begin{lem}\label{lem:diff_eq}
    The difference equation \[\beta_{a-2}-\beta_{a}-\alpha_{a}+\alpha_{a+2}=\frac{8}{(a-1)(a+3)}\] is solved by \[\alpha_a = -\frac{2}{a+1}, \ \ \beta_a=\frac{2}{a+1}. \]
\end{lem}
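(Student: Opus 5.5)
The proof is a direct substitution of the proposed $\alpha_a$ and $\beta_a$ into the left-hand side. I would split that side into two differences, $\beta_{a-2}-\beta_a$ and $\alpha_{a+2}-\alpha_a$. Each is a telescoping-type difference of the form $\frac{2}{m}-\frac{2}{m+2}$, so it simplifies to a single fraction with a factor $(a+1)$ in the denominator. I would then add the two pieces over the common denominator $(a-1)(a+1)(a+3)$ and cancel the factor $(a+1)$.

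\emph{First difference.} With $\beta_a=\frac{2}{a+1}$ we have $\beta_{a-2}=\frac{2}{a-1}$, so
\[
\beta_{a-2}-\beta_a=\frac{2}{a-1}-\frac{2}{a+1}=\frac{4}{(a-1)(a+1)}.
\]

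\emph{Second difference.} With $\alpha_a=-\frac{2}{a+1}$ we have $\alpha_{a+2}=-\frac{2}{a+3}$, so
\[
-\alpha_a+\alpha_{a+2}=\frac{2}{a+1}-\frac{2}{a+3}=\frac{4}{(a+1)(a+3)}.
\]

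\emph{Combining.} Adding the two expressions gives
\[
\frac{4}{a+1}\left(\frac{1}{a-1}+\frac{1}{a+3}\right)=\frac{4}{a+1}\cdot\frac{2(a+1)}{(a-1)(a+3)}=\frac{8}{(a-1)(a+3)},
\]
which is exactly the right-hand side of the difference equation.

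The only point needing care is the domain of validity. The identity makes sense only where no denominator vanishes, so $a\neq -3,-1,1$. In the intended application $a\geq 3$ and $a$ is odd, so every term is defined, and the cancellation of $(a+1)$ is legitimate because $a+1\neq 0$ there. I do not expect any real obstacle: the lemma is a purely algebraic verification. The substantive work, namely deriving the difference equation from the relations of Proposition \ref{prop:relns_downstairs}, comes after this lemma and is not part of its statement.
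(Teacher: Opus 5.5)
Your verification is correct and is essentially the paper's own proof, which rewrites $\frac{8}{(a-1)(a+3)}$ as $\frac{2}{a-1}-\frac{2}{a+1}-\bigl(-\frac{2}{a+1}\bigr)-\frac{2}{a+3}$; grouped this way, the $\beta_a$ and $\alpha_a$ terms cancel outright, so the common denominator and the cancellation of $(a+1)$ are not needed. On your domain remark: where the lemma is used (the choice of $f',g'$ for $a\ge 3$), it is applied at the shifted index $-a-2\le -5$ rather than at $a\ge 3$, but the denominators are nonzero there as well, so nothing changes.
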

\begin{proof}
    Observe that
    \begin{align*}
        \frac{8}{(a-1)(a+3)}
        =\frac{2}{(a-1)}-\frac{2}{a+1}-\left(-\frac{2}{a+1}\right)-\frac{2}{(a+3)},
    \end{align*}
    then the claim becomes obvious. 
\end{proof}

\begin{prop}\label{prop:f'g'}
    Fix local picture at $\lam=a\eps+b\del$. Let $g'=g_2+\alpha_{a_g}g_1$ and $f'=f_2+\beta_{a_f}f_1$, where $g',f'$ have source $\lam$ and $a_g$ is the coefficient of $\eps$ in the target weight of $g'$ (similarly for $a_f$). 
    \begin{enumerate}
        \item When $a\geq 3$, let $\alpha_{a_g}=-\frac{2}{a_g+1}$ and $\beta_{a_f}=\frac{2}{a_f+1}$.
        \item When $a=1$ let $g'=g_2$ and $f'=f_2+\beta_{-5}f_1$. 
        \item When $a=-1$ let $f'=f_2+\beta_{-3}f_1$. 
    \end{enumerate}
    Using this definition of $f',g'$, the relevant relations involving $f'g'$ and $g'f'$ in Theorem \ref{thm:relations_in_A} fall out, from direct computation.
\end{prop}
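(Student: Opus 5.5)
The plan is to expand $f'g'$ and $g'f'$ bilinearly in terms of $f_1,f_2,g_1,g_2$ and then reduce everything with Proposition \ref{prop:relns_downstairs}. This writes both composites in the basis $\{r_1,r_2\}$ of the relevant two-dimensional Hom space; when $a=-1$ the basis is $\{r\}$. The claimed relation $f'g'+c\,g'f'=0$ then becomes a linear condition on the coefficients. We require the $r_2$-coefficients to be proportional with ratio $-c$, and the same ratio to hold for the $r_1$-coefficients. The difference equation of Lemma \ref{lem:diff_eq} should be exactly that second condition.

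\textbf{Case $a\geq 3$.} Consider the square from $\lam'$ (with $\eps$-coefficient $-a-2$) to the target with $\eps$-coefficient $-a-2$ and $\del$-shift $2$. One path goes through $-a$ and the other through $-a-4$. So in $f'g'$ the map $g'$ lands at $-a$, giving $\alpha_{-a}$, and then $f'$ lands at $-a-2$, giving $\beta_{-a-2}$. In $g'f'$ we get $\beta_{-a-4}$ and $\alpha_{-a-2}$. The relation ``$g_2f_1=-g_1f_2$'' (c) is to be read with the indices of the first map; I will track which local picture each relation is written in, since the morphisms through different intermediate vertices carry different $\alpha,\beta$ subscripts.

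Expanding, and using (e) to drop the $f_1g_1$ and $g_1f_1$ terms, we get
\[f'g'=f_2g_2+\alpha\,f_2g_1+\beta\,f_1g_2 = -(a-1)^2(a+1)r_2+\bigl(-2(a-1)+(\alpha-\beta)(a-1)^2\bigr)r_1,\]
and a similar expression for $g'f'$ with $(a+3)$ in place of $-(a-1)$.

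Matching the $r_2$-coefficients forces $c=\frac{(a-1)^2}{(a+3)^2}$, which is the coefficient in Table \ref{tab:relations}. Requiring the $r_1$-coefficients to have the same ratio gives a linear equation in the four parameters. After dividing by $(a-1)^2(a+3)^2$ it should become exactly the equation of Lemma \ref{lem:diff_eq}, reindexed at the appropriate $\eps$-coefficient. So substituting $\alpha_x=-\frac{2}{x+1}$ and $\beta_x=\frac{2}{x+1}$ finishes this case. The sign conventions in (a)--(d) must be followed carefully; this bookkeeping is where I expect the main difficulty. Concretely, one has to verify that the $8/((a-1)(a+3))$ term really arises from $2/(a-1)+2/(a+3)$ after normalising, and that the signs of the $\alpha$ and $\beta$ contributions align with the lemma.

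\textbf{Case $a=1$.} Here $g'=g_2$, so only $\beta_{-5}$ enters, through $g'f'=g_2f_2+\beta_{-5}g_2f_1$. The relation $f'g'=f_2g_2+\beta f_1g_2$ uses the modified relations $f_2g_2=2r_2$ and $f_1g_2=r_1$. I would then solve for the ratio of the $r_2$-parts, expecting $-\frac1{16}$, and check that the $r_1$-parts match when $\beta_{-5}=-\frac12$.

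\textbf{Case $a=-1$.} The only relation to check is $g'f'=0$. Expanding gives $g'f'=g_2f_2+\beta_{-3}g_2f_1+\alpha(g_1f_2+\beta g_1f_1)$, which should be evaluated with the listed relations, noting $g_1f_1=0$. With $\beta_{-3}=-1$ the two $-4r$ terms cancel, since $g_2f_2+\beta g_2f_1=-4r+4r=0$. I would check that the remaining $g_1$ terms vanish or are absent, because $g'$ is not modified in this picture.

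Finally, the relations $f'^2=0$ and $g'^2=0$ follow from weight considerations: the relevant target weights are not among those with nonzero multiplicity in Proposition \ref{prop:dim_counts}.
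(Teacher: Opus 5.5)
Your $a\geq 3$ case follows the paper's argument. You expand with Proposition \ref{prop:relns_downstairs}, and the $r_2$-parts force $c=\left(\frac{a-1}{a+3}\right)^2$. The $r_1$-parts then give $\beta_{-a-4}-\beta_{-a-2}-\alpha_{-a-2}+\alpha_{-a}=\frac{8}{(a-1)(a+3)}$, which is Lemma \ref{lem:diff_eq} after $a\mapsto -a-2$. There are two small sign slips. By (c), the $r_1$-coefficient of $g'f'$ is $2(a+3)+(\beta_{-a-4}-\alpha_{-a-2})(a+3)^2$, so the parameters enter with the opposite sign pattern to $f'g'$. Also, the constant arises as $\frac{2}{a-1}-\frac{2}{a+3}$, not as a sum. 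Your $a=-1$ computation is correct. There $g'=g_2$ because the $g'$ out of $-3\eps+(b+1)\del$ is the $g'$ of the $a=1$ picture.

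The genuine gap is the case $a=1$, where it is false that only $\beta_{-5}$ enters. The two paths out of $\lam'=-3\eps+b\del$ pass through $-\eps+(b+1)\del$ and $-5\eps+(b+1)\del$. The second arrow on each path is defined in a different local picture:
\begin{itemize}
\item the $f'$ out of $-\eps+(b+1)\del$ belongs to the $a=-1$ picture, so it is $f_2+\beta_{-3}f_1$ with $\beta_{-3}=-1$;
\item the $g'$ out of $-5\eps+(b+1)\del$ belongs to the $a=3$ picture, so it is $g_2+\alpha_{-3}g_1$ with $\alpha_{-3}=1$.
\end{itemize}
With your setup you get $g'f'=g_2f_2+\beta_{-5}g_2f_1=32r_2+(8+16\beta_{-5})r_1=32r_2$ and $f'g'=2r_2+\beta r_1$. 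Hence $f'g'-\frac{1}{16}g'f'=\beta r_1$, which vanishes only if $\beta=0$. That contradicts $\beta_{-3}=-1$, the value you yourself need for $g'f'=0$ at $a=-1$, so the check you describe would fail.

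The missing term is $\alpha_{-3}g_1f_2=-16\alpha_{-3}r_1$, coming from relation (c). Including it gives $g'f'=32r_2+(8+16\beta_{-5}-16\alpha_{-3})r_1=32r_2-16r_1$, while $f'g'=f_2g_2+\beta_{-3}f_1g_2=2r_2-r_1$. Hence $f'g'-\frac{1}{16}g'f'=0$. So the $a=1$ relation holds only because the choices made in the $a=-1$ and $a=3$ pictures feed into it. This is exactly the ``which local picture does each arrow live in'' bookkeeping you announced for $a\geq 3$ but dropped here.
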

\begin{proof}
    For $a\geq3$, consider the expansion of $\frac{1}{(a-1)^2(a+1)}f'g' + \frac{1}{(a+3)^2(a+1)}g'f'$ using Proposition \ref{prop:relns_downstairs}. We see that this sum is $0$ if and only if \[\beta_{-a-4}-\beta_{-a-2}-\alpha_{-a-2}+\alpha_{-a}=\frac{8}{(a-1)(a+3)}.\] If we map $a\mapsto -a-2$ then we get precisely the difference equation in Lemma \ref{lem:diff_eq}. 

    The cases of $a=\pm1$ can be verified similarly. 
\end{proof}

\begin{lem}\label{lem:linear_basis_of_A}
Fix a local picture at $\mu=a\eps+b\del$, $\mu$ a weight in $\O_{odd}$. Recall that $\mu'=(-a-2)\eps+b\del$ is the dual of $\mu$. When $a\geq3$, the following is a graded basis of morphisms with source $\mu$ or $\mu'$ in $\C Q /I$.  
\begin{table}[H]
    \centering
    \begin{tabular}{c|c|c|c|c|c|c}
        deg=1 & $f$ & $g$ & $p$ & $f'$ & $g'$  & q \\
        \hline 
        target & $\mu+2\eps+\del$ & $\mu-2\eps+\del$ & $\mu'$ & $\mu'-2\eps+\del$ & $\mu'+2\eps+\del$ & $\mu$\\
        \hline
        source & \multicolumn{3}{|c|}{$\mu$}  & \multicolumn{3}{|c}{$\mu'$}\\
    \end{tabular}
    \label{tab:deg1basis}
\end{table}
\begin{table}[H]
    \centering
    \begin{tabular}{c|c|c|c|c|c|c|c}
        deg=2 & $gf$ & $g'p$ & $f'p$ & $pq$ & $g'f'$  & $qf'$ & $qg'$ \\
        \hline 
        target & $\mu+2\del$ & $\mu'+2\eps+\del$ & $\mu'-2\eps+\del$ & $\mu'$ & $\mu'+2\del$ & $\mu+2\eps+\del$& $\mu-2\eps+\del$   \\
        \hline
        source & \multicolumn{3}{|c|}{$\mu$}  & \multicolumn{4}{|c}{$\mu'$}\\
    \end{tabular}
    \label{tab:deg2basis}
\end{table}
\begin{table}[H]
    \centering
    \begin{tabular}{c|c|c|c|c}
        deg=3 & $g'f'p$ & $gfq$ & $pqf'$ & $g'pq$   \\
        \hline 
        target & $\mu'+2\del$ & $\mu+2\del$ & $\mu'-2\eps+\del$ & $\mu'+2\eps+\del$ \\
        \hline
        source & \multicolumn{1}{|c|}{$\mu$}  & \multicolumn{3}{|c}{$\mu'$}\\
    \end{tabular}
    \label{tab:deg3basis}
\end{table}
\begin{table}[H]
    \centering
    \begin{tabular}{c|c}
        deg=4 & $g'pqf'$   \\
        \hline 
        target & $\mu'+2\del$ \\
        \hline
        source & $\mu'$
    \end{tabular}
    \label{tab:deg4basis}
\end{table}

When $a=1$, the basis is the same as the $a\geq3$ case, just without $g$ and $gq$. 

When $a=-1$, the basis is $f',g'f', qf', pqf', g'pqf'$

\end{lem}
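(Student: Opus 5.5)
We prove the lemma by a spanning argument followed by a dimension count against Proposition \ref{prop:dim_counts}. The relations in Table \ref{tab:relations} already hold in $\A$ (Theorem \ref{thm:relations_in_A}), so the listed elements are the images of paths in $Q$ starting at $\mu$ or $\mu'$. It therefore suffices to show two things: these paths span $e\,\C Q/I$ for $e\in\{e_\mu,e_{\mu'}\}$ (up to the identities $e_\mu,e_{\mu'}$ in degree $0$), and their number, sorted by target, equals $\Dim\Hom_\g(P(\nu),P(\lam))=[P(\lam):L(\nu)]$, which Proposition \ref{prop:dim_counts} gives. Linear independence then follows, because $\A\cong \C Q/I$ and the total dimension of morphisms out of each source is known.

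For spanning we use a normal-form rewriting on paths, degree by degree, taking the degree-one arrows out of $\mu$ and $\mu'$ from the local pictures. The central point is that every arrow out of a vertex of $\O_{odd}$ appears in exactly one local picture. The local picture centred at $\mu$ contains $f,g,p$ out of $\mu$ and $f',g',q$ out of $\mu'$. The arrows out of the targets of these belong to neighbouring local pictures: for example, the arrows out of $\mu+2\eps+\del$ are the $g$-arrow in the picture centred there and its $p$-arrow. We then check length-two paths one at a time.
\begin{itemize}
\item $f^2=g^2=0$ kills repeated moves.
\item $fg=-\tfrac{a-1}{a+3}gf$ reduces the two paths to $\mu+2\del$ to the single path $gf$.
\item $qp=0$ kills the return path.
\item $f'p=(a+1)pf$ and $g'p=(a+1)pg$ let us rewrite $pf$ and $pg$ as multiples of $f'p$ and $g'p$.
\item $qf'=(a+3)fq$ and $qg'=(a-1)gq$ let us rewrite $fq$ and $gq$ into the chosen forms.
\end{itemize}
What is left in degree $2$ is exactly the listed table. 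In degree $3$ and degree $4$ we multiply each degree-$2$ basis path by each arrow, apply the same relations (reading the neighbouring local pictures with $a$ shifted by $\pm2$), and check that each result is either zero or a scalar multiple of one listed element. In particular, every path of length $5$ should vanish, which is consistent with $P(\lam)$ having Loewy length at most $5$.

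For the count, we compare targets with Proposition \ref{prop:dim_counts}. Part (5) of that proposition, with $a\le -5$, describes $[P(\mu'):L(\cdot)]$, where $\mu'$ has $\eps$-coefficient $-a-2\le-5$. Part (1) describes $[P(\cdot):L(\mu)]$ for targets with $\eps$-coefficient $\ge3$. Since $\Hom(P(\mu),P(\lam))=[P(\lam):L(\mu)]$, we have to read the multiplicities with $\mu$ fixed as the composition factor while $\lam$ varies. We tabulate, for each possible target $\lam$, the composition factor $L(\mu)$ or $L(\mu')$ in $P(\lam)$, and match the table entry by entry with the lemma's lists. For example, $\mu'+2\del$ should receive two morphisms from $\mu'$, namely $g'f'$ and $g'pqf'$. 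It receives one from $\mu$, namely $g'f'p$, and one from $\mu$ through $gfq$ only via the source $\mu'$. This is consistent with the multiplicity-$2$ entries of the form $a\eps+(b-2)\del$ in part (5). The cases $a=1$ and $a=-1$ are handled the same way using parts (2)–(4), with the reduced relation lists: $g$ is absent, and $g'f'=0$ when $a=-1$.

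The main obstacle is the bookkeeping in degrees $3$ and $4$. Compositions there leave the local picture centred at $\mu$. Reducing them requires relations from the neighbouring pictures with shifted parameters, and the scalar coefficients must not vanish; for instance $a+1$, $a+3$ and $a-1$ are nonzero for the relevant $a$ except at the boundary cases. Near $a=1$ and $a=3$ the neighbours are the special pictures, so those cases must be checked individually. If the spanning reduction becomes unwieldy, we can instead prove independence directly: evaluate the candidate morphisms on the explicit target vectors of Propositions \ref{prop:targets_verm} and \ref{prop:targets_non_verma}, then conclude spanning from the dimension count.
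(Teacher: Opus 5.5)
Your spanning step is the paper's own proof. The paper disposes of the lemma by noting that every path out of $\mu$ or $\mu'$ reduces to a listed word, using $fg\sim gf$, $f'g'\sim g'f'$, $fq\sim qf'$, $pf\sim f'p$, $gq\sim qg'$, $pg\sim g'p$ and the vanishing relations. That is exactly your degree-by-degree rewriting.

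The paper never argues linear independence inside the lemma. The dimension comparison with Proposition \ref{prop:dim_counts} that you propose appears only afterwards, in the proof of Theorem \ref{thm:relations_in_A}.

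Your justification of independence is circular as written. You say it follows ``because $\A\cong\C Q/I$'', but the paper deduces that isomorphism \emph{from} this lemma: the lemma supplies the upper bound on $\Dim e_\lam(\C Q/I)e_\nu$. For the same reason, you should cite only the verified inclusion $I\subseteq\ker\Phi|_{\C Q}$, not Theorem \ref{thm:relations_in_A}, when saying the relations hold in $\A$.

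The repair is easy. Use only that inclusion and the surjectivity of $\Phi|_{\C Q}$ (Proposition \ref{prop:gens_downstairs}). Spanning in $\C Q/I$ then shows that the images of the listed paths from $\nu$ to $\lam$ span $\Hom_\g(P(\nu),P(\lam))$. If their number equals $[P(\lam):L(\nu)]$, they form a basis of that space, so the paths are already independent in $\C Q/I$. This proves the lemma and the isomorphism together.

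Actually carrying out your count is worthwhile, because it tests the statement.
\begin{itemize}
\item For $a\ge 3$ it matches: $8$ morphisms out of $\mu$ and $12$ out of $\mu'$, identities included. Even for $a=3$ you need parts (2) and (4) of Proposition \ref{prop:dim_counts}, since $\mu-2\eps+\del$ and $\mu'+2\eps+\del$ then have $\eps$-coefficients $1$ and $-3$.
\item For $a=1$ it gives $7$ and $11$, provided the second dropped element is $qg'$ rather than ``$gq$''. The only arrow out of $-\eps+(b+1)\del$ is $f'$, so $qg'$ does not exist as a path.
\item For $a=-1$ it exposes an error in the printed list. There $g'f'\in I$, as you note yourself, and $[P(-\eps+(b+2)\del):L(-\eps+b\del)]=1$ is accounted for by $g'pqf'$. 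So the basis out of $-\eps+b\del$ is the identity together with $f',qf',pqf',g'pqf'$. These five elements match $\sum_\lam[P(\lam):L(-\eps+b\del)]=5$.
\end{itemize}

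Two smaller points. Your worked example is garbled: $gfq$ goes from $\mu'$ to $\mu+2\del$ and accounts for $[P(\mu+2\del):L(\mu')]=1$; it plays no role at $\mu'+2\del$. Also, ``every arrow appears in exactly one local picture'' holds only for the labelled arrows $f,g,p,f',g',q$. The unlabelled arrows out of the column-$(b+1)$ vertices appear in several pictures.
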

\begin{proof}
    This is quickly verified using that $fg{\sim}gf$, $f'g'{\sim}g'f'$, $fq{\sim}qf'$, $pf{\sim}f'p$, $gq{\sim}qg'$ and $pg{\sim}g'p$, 
    where $\sim$ here means ``equal up to scalar''. 
\end{proof}

\begin{proof}{\textit{that $\A\cong\C Q/I$}: }
With the help of Mathematica, one can verify that $I\subseteq\ker\Phi|_{\C Q}$, and so Proposition \ref{prop:relns_downstairs} implies a surjection $\C Q/I\to \A$. Lemma \ref{lem:linear_basis_of_A} and Proposition \ref{prop:dim_counts} allow us to verify that 
\[e_\lam(\C Q/I)e_\mu\cong\Hom_{\g}(P(\mu),P(\lam))\] for all $\mu,\lam$ weights in $\O_{odd}$, where $e_\lam$ is the primitive idempotent corresponding to $P(\lam)$ via $\C Q/I\to\A$, completing the proof.

\end{proof}

\begin{cor}
    The endomorphism algebra of $\O_{odd}$ is quadratic.
\end{cor}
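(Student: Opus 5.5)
The corollary follows directly from Theorem \ref{thm:relations_in_A}, which gives $\A\cong\C Q/I$. It remains to check the two conditions in the definition of a quadratic algebra. First, $\C Q$ must carry a grading in which $\A$ is generated in degrees $0$ and $1$. Second, $I$ must be generated by homogeneous elements of degree $2$.

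The plan is as follows. I grade the path algebra $\C Q$ by path length. The idempotents $e_\lam$ sit in degree $0$, and each arrow of $Q$ sits in degree $1$; these arrows are $f,g,p,q,f',g'$ in the local pictures, together with their translates. Then $\C Q$ is generated over its degree-zero part $\bigoplus_\lam\C e_\lam$, which is semisimple, by the degree-one part.

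Next I inspect Table \ref{tab:relations} column by column, for the cases $a\geq3$, $a=1$ and $a=-1$. Every listed relation is a linear combination of paths of length exactly two with common source and target. Examples are $f^2$, $qp$, $fg+\frac{a-1}{a+3}gf$, $f'p-(a+1)pf$ and $g'f'$. So each generator of $I$ is homogeneous of degree $2$, and $I$ is a homogeneous ideal with $I\subseteq (\C Q)_{\geq 2}$.

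Two consequences follow. The grading descends to $\C Q/I$. Since $I$ contains nothing in degrees $0$ and $1$, the quotient still has the semisimple degree-zero part and is generated by its degree-one part. Transporting this along the isomorphism of Theorem \ref{thm:relations_in_A} shows that $\A$ is a quadratic algebra.

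The only delicate point has already been handled: the relations must genuinely be quadratic. This is exactly why $f'$ and $g'$ were chosen as in Proposition \ref{prop:f'g'} using Lemma \ref{lem:diff_eq}. With the naive choice $f_2,g_2$, the relation would pick up a term $\sigma pqg_2f_2$ of degree $4$. Once Theorem \ref{thm:relations_in_A} is in hand, there is no remaining obstacle; the proof is a bookkeeping check that each table entry is homogeneous of degree two.
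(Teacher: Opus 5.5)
Your proposal is correct and takes the same route as the paper. There the corollary is stated with no separate argument, as an immediate consequence of Theorem~\ref{thm:relations_in_A}: every relation listed there is a homogeneous combination of length-two paths, so $\A\cong\C Q/I$ is quadratic for the path-length grading. Your extra bookkeeping ($I\subseteq(\C Q)_{\geq 2}$, so the semisimple degree-zero part and generation in degree one survive the quotient) only makes explicit what the paper leaves implicit.
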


\section{Koszulity of \texorpdfstring{$\O_{odd}$}{Oodd koszul}}\label{sec:koszul}

In this section, we will prove that $\O_{odd}$ is Koszul, and compute all higher order extension groups $\mathrm{Ext}_{\O_{odd}}^n(L(\mu), L(\lam))$. For details on Koszulity and extensions in graded categories see \cite{beilinson_ginzburg_soergel_1996} or \cite{MAZORCHUK_2010}, and for the formal setting to discuss Koszulity of infinite dimensional algebras see \cite{Mazorchuk_Ovsienko_Stroppel}.

We grade $\A=\bigoplus_{n=0}^4\A_n$ by the length of word in the generators $f,g,p,q,f'$ and $g'$ (as in Lemma \ref{lem:linear_basis_of_A}) with $\A_0$ having basis consisting of all idempotents $e_\lam\in\mathrm{Hom}_{\O_{odd}}(P(\lam),P(\lam))$ (equal to the identity map $P(\lam)\to P(\lam)$).  

Let $\A$-Mod and $\A$-mod be the category of $\A$-modules and graded $\A$-modules respectively. Let $\mathrm{Ext}$ and $\mathrm{ext}$ be the extensions in these abelian categories (respectively).

\begin{defn}
    Define $\mathcal{P}(\lam) := \A e_\lam$, to be the projective $\A$-module of the idempotent $e_\lam$.
    Let $\A_0=\A/\A_{>0}$ and let $\C e_\lambda$ be the simple $\A$-module equal to the image of $\P(\lam)$ under the natural map $\A\to\A_0$. Finally, for $M\in \A$-mod and $n\in\Z$, we write $M\left<n\right>\in A$-mod for the module satisfying $M\left<n\right>_i=M_{i-n}$ for all $i\in\Z$.
\end{defn}

Fix $\mu\in\O_{odd}$, and take a minimal graded projective resolution (to be specified
shortly)
\begin{equation}\label{eq:full_reso_mu}
    \begin{tikzcd}
        \dots \arrow[r] & \P_2^\mu \arrow[r] & \P_1^\mu \arrow[r]  & \P_0^\mu \arrow[r,  two heads] & \C e_\mu.
    \end{tikzcd}
\end{equation}

We denote the boundary maps by $\phi_n^\mu : \P_n^\mu\to \P_{n-1}^\mu$. By definition, $\A$ is Koszul if and only if there exists such a resolution where the $\A$-module $\P^\mu_n$ is generated in degree $n$, for all $\mu\in\O_{odd}$.

Fix a weight $\mu\in\O_{odd}$ and let $\P_0^\mu=\P(\mu)$, we construct the resolution \ref{eq:full_reso_mu} inductively.  Choosing a basis of $\P(\mu)=\A e_\mu$ as in Lemma \ref{lem:linear_basis_of_A}, let $\P^\mu_1$ to be the direct sum of all $\P(\lam)\left<1\right>$ where $\lambda$ is the target weight of a basis element of $\P(\mu)_1$. The boundary map $\phi_1^\mu:\P^\mu_1\to\A e_\mu$ sends the idempotents of $\P^\mu_1$ to the corresponding basis element of $\P(\mu)_1$.

\begin{defn}
    Let $M\in\A$-mod be a graded $\A$-module. We can write $M=\bigoplus_{i,\lam}(e_\lam M)_i$, where the sum is over $i\in\Z$ and $\lam$ weights in  $\O_{odd}$. 
    \begin{enumerate}
    \item We say an element $m\in M$ is \textit{homogenous} if $m\in (e_\lam M)_i$ for some $i,\lam$.
    
    \item A \textit{homogenous generating set} of $M$ is a generating set of $M$ consisting only of homogenous elements.

    \item  A \textit{homogenous basis} of $M$ is a basis of $M$ consisting of homogenous elements.
    \end{enumerate}
    
\end{defn}

Let $G^\mu_{n-1}$ be a minimal homogenous generating set of the $\A$-module $\ker\phi^\mu_{n-1}\subseteq \P_{n-1}^\mu$. By definition, every element $g\in G_{n-1}^\mu$ is an element of some $e_{\lam}\P_{n-1}^\mu$. Hence, $g$ is a morphism with target $P(\lam)$, because $\P_{n-1}^\mu$ is a projective module (so a direct sum of indecomposables $\P(\nu)\subseteq\A$, ignoring grade shifts). Hence, for each $g\in G_{n-1}^\mu$ we can define $\lam_g$, the weight associated to the projective module in the codomain of the morphism $g$.

Moreover, because $g$ is homogenous, we can define  \[\P^\mu_n:=\bigoplus_{g\in G_{n-1}^\mu}\P(\lambda_g)\left<\mathrm{deg}(g)\right>,\]
 and let $\phi_n^\mu$ be the morphism which sends $e_{\lam_g}\mapsto g$ for all $g\in G_{n-1}$. 

We now define another complex using this definition of the resolution \ref{eq:full_reso_mu}, we will use it to compute ext groups and prove that $\A$ is Koszul. Let $(G_{n-1}^\mu)^n:=\{g\in G_{n-1}^\mu | \deg(g)=n\}$ and \[(\P_n^\mu)^n:=\bigoplus_{g\in (G_{n-1}^\mu)^n}\P(\lambda_g)\left<\mathrm{deg}(g)\right>\subseteq\P_{n}^\mu.\] Further, define $(\phi_n^\mu)^n$ as the restriction of $\phi_n^\mu$ onto $(\P_n^\mu)^n$.

\begin{lem}\label{lem:on_reso}
    For all $n\geq0$ and $\mu,\lam\in\O_{odd}$ weights,
    \begin{enumerate}
        \item Idempotents $e_\lam\in\P_n^\mu$ have $\deg(e_\lam)\geq n$ and all homogenous $g\in\ker\phi_n^\mu$ satisfy  $\deg(g)\geq n+1$,
        \item The image $\mathrm{im}(\phi_n^\mu)^n$ is contained in $(\P_{n-1}^\mu)^{n-1}$,
        \item We have $\mathrm{im}(\phi^\mu_{n+1})^{n+1}\subseteq\ker(\phi_n^\mu)^n$
    \end{enumerate}
\end{lem}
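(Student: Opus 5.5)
The plan is a simultaneous induction on $n$, using only that $\A=\bigoplus_{i\ge0}\A_i$ is non-negatively graded with $\A_0$ spanned by the idempotents, and that the resolution is minimal. Each boundary map $\phi_n^\mu$ sends the generator $e_{\lam_g}$ of $\P(\lam_g)\left<\deg g\right>$ to the homogeneous element $g$. With the shift convention, $e_{\lam_g}$ then sits in degree $\deg(g)$, so $\phi_n^\mu$ is a degree-zero morphism of graded modules. I will use this throughout.

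\textbf{Part (1).} For $n=0$, the idempotent $e_\mu\in\P_0^\mu$ has degree $0$. Moreover $\ker\phi_0^\mu$ is the kernel of $\A e_\mu\to\C e_\mu$, which is $\A_{>0}e_\mu$, so every homogeneous element of the kernel has degree $\ge1$.

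Now assume (1) holds for $n-1$. Every $g\in G_{n-1}^\mu$ is a homogeneous element of $\ker\phi_{n-1}^\mu$, so $\deg g\ge n$. Hence the idempotents of $\P_n^\mu$, which have degree $\deg g$, have degree $\ge n$. Since $\A$ has no negative degrees, $\P_n^\mu$ lives in degrees $\ge n$.

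For the kernel, take a homogeneous $h\in\ker\phi_n^\mu$ of degree $n$. I write $h=\sum_g c_g e_{\lam_g}$, where the sum runs over generators with $\deg g=n$. This is possible because the degree-$n$ part of $\P_n^\mu$ is spanned by such idempotents: any element of positive $\A$-degree applied to an idempotent of degree $\ge n$ lands in degree $>n$. Then $0=\phi_n^\mu(h)=\sum c_g g$. This is a linear dependence among elements of the minimal generating set $G_{n-1}^\mu$ of equal degree. If some $c_g\neq0$, that $g$ could be removed from $G_{n-1}^\mu$, contradicting minimality. So $h=0$, and every homogeneous element of the kernel has degree $\ge n+1$.

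This minimality step is where care is needed. It relies on the standard fact that a minimal homogeneous generating set has linearly independent images in $M/\A_{>0}M$. Equivalently, no nontrivial linear combination of same-degree generators vanishes, since otherwise one generator would be redundant.

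\textbf{Part (2).} The module $(\P_n^\mu)^n$ is generated by idempotents $e_{\lam_g}$ with $g\in(G_{n-1}^\mu)^n$, so its image is generated by these $g$. Each such $g$ lies in $\ker\phi_{n-1}^\mu\subseteq\P_{n-1}^\mu$ and is homogeneous of degree $n$. Write $g$ in terms of the summands of $\P_{n-1}^\mu$. A summand $\P(\lam_{g'})\left<\deg g'\right>$ is concentrated in degrees $\ge\deg g'$, and $\deg g'\ge n-1$ by part (1). So $g$ can only have components in summands with $\deg g'\in\{n-1,n\}$. A degree-$n$ component in a summand with $\deg g'=n$ is a scalar multiple of the idempotent $e_{\lam_{g'}}$. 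I then project $g$ to these components, apply $\phi_{n-1}^\mu$, and use the same minimality argument as in part (1), now applied to $G_{n-2}^\mu$. This shows those scalar components must vanish, which I regard as the main technical point. Hence $g\in(\P_{n-1}^\mu)^{n-1}$.

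\textbf{Part (3).} By part (2), $(\phi_{n+1}^\mu)^{n+1}$ maps into $(\P_n^\mu)^n$. Since $\phi_n^\mu\circ\phi_{n+1}^\mu=0$ and $(\phi_n^\mu)^n$ is just the restriction of $\phi_n^\mu$ to $(\P_n^\mu)^n$, the image lies in $\ker(\phi_n^\mu)^n$.
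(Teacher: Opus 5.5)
Your proposal is correct and follows the paper's appendix argument: induction on $n$ using non-negativity of the grading, degree bookkeeping for the shifted summands, and minimality of $G_{n-1}^\mu$ (respectively $G_{n-2}^\mu$), with part (3) deduced from part (2) and $\phi_n^\mu\phi_{n+1}^\mu=0$. In part (2) you are slightly more careful than the paper, which only rules out $g$ being a pure linear combination of idempotents and does not address an element with both idempotent and non-idempotent components. To make your projection step airtight, say explicitly that $\phi_{n-1}^\mu$ applied to the remaining (degree-$(n-1)$-summand) components lies in $\A_{>0}\ker\phi_{n-2}^\mu$. The idempotent coefficients then vanish by independence of minimal generators modulo $\A_{>0}$.
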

\begin{proof}    
    See Appendix \ref{ap:1}.    
\end{proof}


    

Because of Lemma \ref{lem:on_reso}, we have the following well defined complex, where the boundary maps are $(\phi_\mu^n)^n:(\P_n^\mu)^n\to(\P_{n-1}^\mu)^{n-1}$

\begin{equation}\label{eq:tighter_reso_mu}
    \begin{tikzcd}
            \dots \arrow[r] & (\P_2^\mu)^2 \arrow[r] & (\P_1^\mu)^1 \arrow[r]  & \P_0^\mu \arrow[r,  two heads] & \C e_\mu.
    \end{tikzcd}
\end{equation}

\begin{defn}
    For weights $\mu,\lam\in \O_{odd}$, let $N^i_j(\mu,\lam)$ be the multiplicity of $\P(\lam)\left<j\right>$ in the $i$th degree of the complex \ref{eq:tighter_reso_mu}. 
\end{defn}

\begin{lem}\label{lem:dim_comps}
    Let $H^n(\C e_\mu,\C e_\lam\left<n\right>)$ and $h^n(\C e_\mu,\C e_\lam\left<n\right>)$ be the $n$th cohomologies of \ref{eq:full_reso_mu} and \ref{eq:tighter_reso_mu} (respectively) with coefficients in $\C e_\lam\left<n\right>$. Then \[\mathrm{dim} [h^n(\C e_\mu,\C e_\lam\left<n\right>)]= N^n_n(\mu,\lam)=\mathrm{dim} [H^n(\C e_\mu,\C e_\lam\left<n\right>)].\]
\end{lem}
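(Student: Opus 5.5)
The plan is to apply $\Hom_{\A}(-,\C e_\lam\left<n\right>)$ (graded homs, degree-zero morphisms) to both complexes \ref{eq:full_reso_mu} and \ref{eq:tighter_reso_mu}. We then compare the resulting cochain complexes term by term in homological degrees $n-1$, $n$ and $n+1$. The basic computation is that for a graded indecomposable projective,
\[
\Hom_{\A}\bigl(\P(\nu)\left<j\right>,\C e_\lam\left<n\right>\bigr)\cong\C
\]
if $\nu=\lam$ and $j=n$, and this space is zero otherwise. This holds because $\C e_\lam$ is concentrated in degree $0$ and $\P(\nu)\left<j\right>$ is generated by $e_\nu$ in degree $j$. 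Consequently, in homological degree $i$ the Hom complex of \ref{eq:full_reso_mu} has dimension equal to the number of summands $\P(\lam)\left<n\right>$ in $\P_i^\mu$. Likewise, the Hom complex of \ref{eq:tighter_reso_mu} has dimension $N^i_n(\mu,\lam)$ in degree $i$.

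Next I would use Lemma \ref{lem:on_reso}(1). Every generator of $\P_{n+1}^\mu$ has degree at least $n+1$, so the $(n+1)$st term vanishes after applying $\Hom(-,\C e_\lam\left<n\right>)$. The same holds for $(\P_{n+1}^\mu)^{n+1}$, since its generators have degree exactly $n+1$. For the $(n-1)$st term, any summand $\P(\lam)\left<n\right>$ of $\P_{n-1}^\mu$ would map to $\P_{n-2}^\mu$ with image inside $\A_{\geq 1}\P_{n-2}^\mu$, by minimality of the resolution (the generating sets $G$ are minimal). After applying Hom, the differentials of a minimal resolution into a simple module are therefore zero. It is cleaner still to use minimality directly: since $\phi_n^\mu$ sends each generator into the radical of $\P_{n-1}^\mu$, every induced map $\Hom(\P_{n-1}^\mu,\C e_\lam\left<n\right>)\to\Hom(\P_n^\mu,\C e_\lam\left<n\right>)$ is zero, because a map to a simple module kills the radical. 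Hence $\dim H^n$ equals the number of summands $\P(\lam)\left<n\right>$ of $\P_n^\mu$.

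These summands are indexed by the $g\in G_{n-1}^\mu$ with $\lam_g=\lam$ and $\deg g=n$, that is, by elements of $(G_{n-1}^\mu)^n$ with $\lam_g=\lam$. By definition, these are exactly the summands of $(\P_n^\mu)^n$ of the form $\P(\lam)\left<n\right>$, so $\dim H^n=N^n_n(\mu,\lam)$. For the tight complex, its differentials $(\phi_i^\mu)^i$ are restrictions of the $\phi_i^\mu$, so they also map into radicals, and the induced maps after Hom are again zero. Thus $\dim h^n=N^n_n(\mu,\lam)$ as well.

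The main obstacle is justifying that the differentials land in the radical. This needs minimality of $G_{n-1}^\mu$: a generator $g$ of degree equal to that of some idempotent of $\P_{n-1}^\mu$, with the same weight, would force $g$ to be a scalar multiple of that idempotent plus radical terms. That would contradict $g\in\ker\phi_{n-1}^\mu$ together with the minimality of the previous step. I would argue this by the degree bound in Lemma \ref{lem:on_reso}(1): $\deg g\geq n$ while the idempotents of $\P_{n-1}^\mu$ have degree $\geq n-1$. Equality of degrees needs separate care, and there I would invoke the minimality of $G_{n-2}^\mu$.
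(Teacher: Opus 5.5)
Your proposal is correct and is the standard argument behind the paper's one-line proof, which only says the lemma is ``easily verified using the definition of the complexes''. You show that $\Hom(\P(\nu)\left<j\right>,\C e_\lam\left<n\right>)$ is one-dimensional exactly when $(\nu,j)=(\lam,n)$ and zero otherwise, and that the induced differentials vanish because $\ker\phi_{n-1}^\mu\subseteq\A_{>0}\P_{n-1}^\mu$. That inclusion follows directly from minimality of $G_{n-2}^\mu$: a kernel element with a nonzero coefficient $c\,e_{\lam_g}$ would place $g$ in the submodule generated by $G_{n-2}^\mu\setminus\{g\}$. So the degree bound from Lemma \ref{lem:on_reso}(1) is not needed in your last paragraph, and for the complex \ref{eq:tighter_reso_mu} you can skip the radical argument entirely, since its $i$th term is generated purely in degree $i$.
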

\begin{proof}
    Easily verified using the definition of the complexes \ref{eq:full_reso_mu} and \ref{eq:tighter_reso_mu}.
\end{proof}

\begin{lem}\label{lem:technical_koszul}
        The complexes \ref{eq:full_reso_mu} and \ref{eq:tighter_reso_mu} are equal for all $\mu\in\O_{odd}$ if and only if $\A$ is Koszul. 
\end{lem}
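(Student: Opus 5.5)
The plan is to prove both directions by comparing the two complexes term by term, using Lemma~\ref{lem:on_reso} and the definition of Koszulity recalled above: $\A$ is Koszul if and only if, for every $\mu$, the minimal graded projective resolution \eqref{eq:full_reso_mu} has $\P_n^\mu$ generated in degree $n$. The first step is to record what ``equal'' means here. By construction, $(\P_n^\mu)^n\subseteq\P_n^\mu$ is the direct summand spanned by the summands $\P(\lam_g)\left<\deg g\right>$ with $\deg(g)=n$, and $(\phi_n^\mu)^n$ is the restriction of $\phi_n^\mu$. So the complexes coincide exactly when $(G_{n-1}^\mu)^n=G_{n-1}^\mu$ for all $n\geq 1$, that is, when every element of the minimal homogeneous generating set $G_{n-1}^\mu$ of $\ker\phi^\mu_{n-1}$ has degree exactly $n$.

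\emph{($\Leftarrow$).} Assume $\A$ is Koszul. Since $\A_0$ is semisimple and $\A$ is positively graded and locally finite, minimal graded projective resolutions are unique up to isomorphism; equivalently, the graded multiplicities of the terms are determined by $\ext$ against simples. Hence \eqref{eq:full_reso_mu} is isomorphic to the linear resolution guaranteed by Koszulity, and each $\P_n^\mu$ is generated in degree $n$. The generators of $\P_n^\mu$ are the idempotents $e_{\lam_g}$, placed in degree $\deg(g)$ for $g\in G_{n-1}^\mu$. So every $g\in G^\mu_{n-1}$ has degree $n$, giving $(\P_n^\mu)^n=\P_n^\mu$ and $(\phi_n^\mu)^n=\phi_n^\mu$. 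The one subtlety is that our resolution is built from a specific minimal generating set, so I will check that minimality of $G_{n-1}^\mu$ forces minimality of the resolution: $\mathrm{im}\,\phi^\mu_n=\ker\phi^\mu_{n-1}\subseteq \A_{>0}\P^\mu_{n-1}$. This follows because a generator landing outside $\A_{>0}\P_{n-1}^\mu$ would make some element of $G_{n-2}^\mu$ redundant.

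\emph{($\Rightarrow$).} Assume the complexes are equal for all $\mu$. Then $\P^\mu_n=(\P^\mu_n)^n=\bigoplus_{g\in(G^\mu_{n-1})^n}\P(\lam_g)\left<n\right>$ is generated in degree $n$. Moreover, \eqref{eq:full_reso_mu} is exact by construction, since $\phi_n^\mu$ surjects onto $\ker\phi^\mu_{n-1}$ and $\phi_1^\mu$ surjects onto $\A_{>0}e_\mu=\ker(\P(\mu)\to\C e_\mu)$, the latter because $\A$ is generated in degree one by Theorem~\ref{thm:relations_in_A}. So \eqref{eq:full_reso_mu} is a linear projective resolution of $\C e_\mu$ for every $\mu$, which is precisely the definition of Koszulity. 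For completeness, Lemma~\ref{lem:dim_comps} identifies $\ext^n(\C e_\mu,\C e_\lam\left<m\right>)$ with $N^n_n(\mu,\lam)$ when $m=n$ and with $0$ otherwise, which is the $\ext$-formulation of the same statement.

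I expect the main obstacle to be the backward direction, specifically the claim that an arbitrary linear resolution forces the specific resolution built from the sets $G^\mu_{n-1}$ to be linear. I would handle this by induction on $n$ using Lemma~\ref{lem:on_reso}(1). If $\P^\mu_{n}$ is generated in degree $n$ and $\ext^{n+1}(\C e_\mu,\C e_\lam\left<m\right>)=0$ for $m\neq n+1$, then $\ker\phi^\mu_n/\A_{>0}\ker\phi^\mu_n$ is concentrated in degree $n+1$. This quotient is computed by applying $\Hom(-,\C e_\lam\left<m\right>)$ and using minimality, under which the differentials vanish after tensoring with $\A_0$. Hence any minimal homogeneous generating set lies in degree $n+1$.
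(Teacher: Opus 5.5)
Your proposal is correct and follows the same route as the paper, whose proof is a one-line appeal to the definition of Koszulity and the uniqueness of minimal graded projective resolutions. You also make explicit the step the paper leaves implicit: choosing each $G^\mu_{n-1}$ as a minimal homogeneous generating set makes \eqref{eq:full_reso_mu} a minimal resolution, so uniqueness applies. One small slip: Lemma~\ref{lem:dim_comps} only treats the shift $\left<n\right>$, so the vanishing of $\mathrm{ext}^n(\C e_\mu,\C e_\lam\left<m\right>)$ for $m\neq n$ comes from your equality assumption, not from that lemma, though that remark is not needed for the proof.
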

\begin{proof}
    By definition of Koszulity and the uniqueness (up to isomorphism) of such minimal graded projective resolutions.
\end{proof}

We now compute the complex \ref{eq:tighter_reso_mu} iteratively (which immediately gives us $N^n_n(\mu,\lam)$), and then prove that \ref{eq:full_reso_mu} and \ref{eq:tighter_reso_mu} are equal for all $\mu\in\O_{odd}$.

\begin{rmk}
    Let $g\in(G_{n}^\mu)^{n+1}$, then $\deg(g)=n+1$ and $g\in(\P_{n}^\mu)^n$. So by degree consideration using part (1) of Lemma \ref{lem:on_reso}, the degree (ignoring shifts) of $g$ is one. By minimality of $G_n^\mu$, $(G_n^\mu)^{n+1}$ must be a homogenous basis of the degree $n+1$ elements in $\ker\phi_n^\mu$. So, to compute the complex \ref{eq:tighter_reso_mu} iteratively, we find a homogenous basis of degree one elements (ignoring shifts) in $\ker\phi_n^\mu$.
\end{rmk}


To help keep track of our computation, we use pictures like the one depicted in Figure \ref{fig:extensions_comp}.

\begin{figure}[H]
  \centering
  \adjustbox{scale=.60,center}{%
     \begin{tikzcd}
    &    & b & b+1           & b+2               \\
    &a+4  &          &    &   \bullet        \\
    &a+2  &          &    \circ  \arrow[ur] \arrow[dr] \arrow[dddddddd, bend left = 15]                &           \\
    &a    & \bullet \arrow[dddddd,bend left=15, swap, dotted] \arrow[ur, dotted] \arrow[dr, dotted] &  &          \bullet       \\
    &a-2  &   &   \circ  \arrow[dr] \arrow[ur] \arrow[dddd, bend left = 15]    &               \\
    &a-4  &   &         &      \bullet         \\
    &  \vdots   &  &         &                         \\
    & \vdots    &  &         &                      \\
    &-a   &  &   \bullet       &            \\
    &-a-2 &  \circ \arrow[uuuuuu, bend left = 15] \arrow[ur] \arrow[dr] &   &                  \\
    &-a-4 &  &    \bullet    &            \\
\end{tikzcd}
}
  \caption{Diagram relating to the $n=1,2$ computation of complex \ref{eq:tighter_reso_mu} with $\mu=a\eps+b\del$ and $a\geq 5$.}
  \label{fig:extensions_comp}
\end{figure}
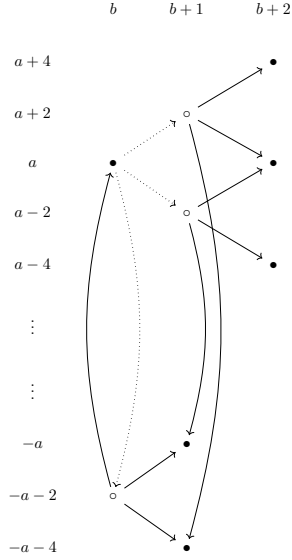

Let $\mu=a\eps+b\del$ with $a\geq5$ and consider the complex \ref{eq:tighter_reso_mu}. In Figure \ref{fig:extensions_comp}, open circles correspond to the idempotents in $(\P^\mu_1)^1$, and the filled-in circles correpond to idempotents in $(\P^\mu_2)^2$. The dotted arrows are the elements of $(G_0^\mu)^1$ and appropriate linear combinations of the full arrows are the elements of $(G_1^\mu)^2$. To iterate this picture, discard dotted lines and open circles, turn filled-in circles to open circles and full lines to dotted lines and draw full lines for the homogenous basis elements of $(\ker\phi_2^\mu)_3$  ; this basis is $(G_2^\mu)^3$. The target weights of these basis elements are marked as filled-in circles.


\begin{theorem}\label{thrm:extn}
    Let $n\geq 1$, $\mu,\lam\in\O_{odd}$ with $\mu=a\eps+b\del$ and \[M_n:=\{2n,2n-4, \dots, -2n+4,-2n\}.\] By convention, we let $M_0=\{0\}$ and $M_{-1}=\varnothing$. Denote by $M_n^{>k}=\{m\in M_n \ | \ n > k\}$ (and similarly for $<$). Then,


    \begin{itemize}
        \item  If $a\geq 1$:
            \[
            N^n_n(\mu,\lam)= 
            \begin{cases} 
                1, \ \mathrm{if } \ \lam = \mu + m\eps+n\delta, \ \forall m\in M_n^{>2(n-a-1)}\\
                1, \ \mathrm{if } \ \lam = \mu'+ m\eps+(n-1)\delta, \ \forall m\in M_{n-1}^{< -2(n-a-2)}\\
                1, \ \mathrm{if } \ \lam = \mu + m\eps+(n-2)\delta, \ \forall m\in M_{n-2}^{> 2(n-a-3)}\\
                0, \ \mathrm{otherwise}
            \end{cases} 
            \]

            \item If $a\leq-1$: 
            \[
                N^n_n(\mu,\lam)
                = \begin{cases}
                    1, \ \mathrm{if } \ \lam = \mu + m\eps +n\del, \ \forall m\in M_n^{<1-a}\\
                    1, \ \mathrm{if } \ \lam = \mu' + m\eps + (n-1)\del, \ \forall m\in M_{n-1} ^{> 2(n+a)}  \\
                    0,\ \mathrm{otherwise}
                \end{cases}
            \]
    \end{itemize}
\end{theorem}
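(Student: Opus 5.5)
The plan is to prove the formula for $N^n_n(\mu,\lam)$ by induction on $n$, running the iterative procedure described just before the statement. The procedure starts from the idempotents of $(\P^\mu_{n-1})^{n-1}$ (the ``open circles''). At each step we find a homogeneous basis of the degree-one part (ignoring shifts) of $\ker(\phi^\mu_{n-1})^{n-1}$; its target weights are the ``filled circles'' that index $(\P^\mu_n)^n$. Since $N^n_n(\mu,\lam)$ is just the number of times $\P(\lam)\langle n\rangle$ occurs there, the statement is a claim about the shape of these pictures.

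\textbf{Base cases.} For $n=0$ we have $\P^\mu_0=\P(\mu)$. For $n=1$ the targets are read off from the degree-one basis in Lemma \ref{lem:linear_basis_of_A}. When $a\ge 3$ these are $\mu\pm2\eps+\del$ and $\mu'$, which matches $M_1=\{2,-2\}$, $M_0=\{0\}$ and $M_{-1}=\varnothing$ in the formula. When $a=1$ we drop $g$, and for negative $a$ we use the local pictures centered at $\mu'$. These are checked directly against the truncation conditions $m>2(n-a-1)$ and $m<1-a$.

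\textbf{Inductive step.} Assume $(\P^\mu_{n-1})^{n-1}$ has the stated summands. An element of degree one in $(\P^\mu_{n-1})^{n-1}$ is a tuple $(c_\nu x_\nu)$, where each $x_\nu\in\{f,g,p,q,f',g'\}$ leaves the summand $\nu$. Applying $\phi^\mu_{n-1}$ gives a sum of degree-two words in $\P^\mu_{n-2}$, which we expand in the basis of Lemma \ref{lem:linear_basis_of_A}. The only relations available are the quadratic ones in Table \ref{tab:relations}: $x^2=0$, $qp=0$, and the commutation relations $fg\sim gf$, $f'g'\sim g'f'$, $f'p\sim pf$, $qf'\sim fq$, $g'p\sim pg$, $qg'\sim gq$ with the explicit nonzero scalars. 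So for each target weight $\lam$ the kernel condition is a small linear system. Its unknowns are the arrows into $\lam$ from open circles, and its equations are the degree-two paths from $\phi$-images into $\lam$.

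Generically, a weight $\mu+m\eps+n\del$ on the ``top sheet'' receives two arrows, $f$ from $m-2$ and $g$ from $m+2$. Because of $f^2=g^2=0$ and $fg\sim gf$, these glue into exactly one new kernel element per interior $m$, plus the two extremal ones $f$ and $g$ applied to the extreme entries. This reproduces $M_{n-1}\mapsto M_n$, the pattern one sees for the Koszul dual of a zigzag-type algebra. The $p$ and $q$ arrows move between the $\mu$-sheet and the $\mu'$-sheet. The relations $qp=0$ and $f'p\sim pf$ explain the second family ($\mu'$, shifted by $n-1$) and the third family ($\mu$, shifted by $n-2$, arising as $qp$-type returns). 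The graded dimension count of Lemma \ref{lem:linear_basis_of_A} gives the number of equations, so kernel dimensions follow by rank arguments, provided the scalars $\frac{a-1}{a+3}$ and so on do not make the determinants degenerate. That they do not should follow because these scalars are the same in all the overlapping local pictures.

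\textbf{Main obstacle.} The hardest part is the boundary behaviour, which produces the truncations $M_n^{>2(n-a-1)}$, $M_{n-1}^{<-2(n-a-2)}$, and so on. These appear once the expanding diamond reaches $\eps$-coefficients $\pm1,-3$, where the local pictures degenerate: $g$ is absent at $a=1$, $g'f'=0$ at $a=-1$, and the sheets $\mu$ and $\mu'$ fold into each other. There I would handle separately the columns with $\eps$-coefficient in $\{3,1,-1,-3,-5\}$ using the $a=1$ and $a=-1$ columns of Table \ref{tab:relations}. I would check that each step of the diamond past the fold loses exactly one element, which gives the shift by $2$ per step in the truncation bound. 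Finally, I would verify that no degree-$\ge2$ kernel generators appear. Combined with Lemma \ref{lem:dim_comps} and Lemma \ref{lem:technical_koszul}, that identifies the two complexes. For the count in this theorem, however, only the degree-one computation is needed.
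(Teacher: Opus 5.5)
Your strategy matches the paper's: induct on $n$, compute the degree-one part of $\ker\phi^\mu_{n-1}$ from the quadratic relations, and treat the region near $\eps$-coefficients $1,-1,-3$ separately. However, the inductive step as you set it up does not go through.

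Your hypothesis records only the summands of $(\P^\mu_{n-1})^{n-1}$. The degree-one kernel of $\phi^\mu_{n-1}$ depends on the map itself, that is, on the generators $(G^\mu_{n-2})^{n-1}$: which of $f,g,p,q,f',g'$ occur in them with nonzero coefficient, and in what ratios. The paper carries exactly this data through the induction. It tracks the coefficients $\sig^{n,m},\tau^{n,m},\alpha^{n,m},\beta^{n,m},\gamma^{n,m}$ (and their analogues for $a\geq1$) with explicit recursions, such as $\sig^{n,m}=1/\tau^{n-1,m+2}$ and $\beta^{n,m}=(a-m+1)\sig^{n-1,-m}$.

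More seriously, your rank heuristic points the wrong way for the families involving $p$ and $q$. Take $a\leq-1$ and an interior target $\lam=\mu'+m\eps+(n-1)\del$.
\begin{itemize}
\item The unknowns are the coefficients of the three arrows $q,f,g$ into $\lam$.
\item Under $\phi^\mu_{n-1}$ they land on three degree-two paths, $qf'\sim fq$, $qg'\sim gq$ and $fg\sim gf$. These start at three distinct summands of $\P^\mu_{n-2}$, so they are linearly independent.
\item So the kernel condition is a square $3\times3$ homogeneous system.
\end{itemize}
If this system were nondegenerate, as you propose to show, the kernel would be zero and the whole second family would be absent. The value $N^n_n(\mu,\lam)=1$ requires the determinant to vanish, with rank exactly $2$. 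The same happens for the second and third families when $a\geq1$: there $qp=0$ removes one term, but the system is still $3\times3$.

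The paper establishes this degeneracy by eliminating $\sig,\tau$ via their recursions and checking that the resulting equations are consistent with the third one. Your remark that the scalars agree on overlapping local pictures proves neither degeneracy nor its negation.

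One structural way to close the gap uses $\phi^\mu_{n-2}\circ\phi^\mu_{n-1}=0$:
\begin{itemize}
\item The image lies in $\ker\phi^\mu_{n-2}$.
\item $\phi^\mu_{n-2}$ is nonzero on this three-dimensional space, since it sends the $gf$-path to $\alpha\,gfq\neq0$ (using $f^2=0$ and $gfg\sim g^2f=0$). This caps the rank at $2$.
\item The nonvanishing of the previous $q$-coefficients $\alpha^{n-1,m\pm2}$ forces rank at least $2$.
\end{itemize}
Whichever route you take, you must strengthen the induction to record the generators themselves, or at least which of their coefficients are nonzero, not just their target weights.
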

\begin{proof}
As per Lemma \ref{lem:dim_comps} and the definition of complex \ref{eq:tighter_reso_mu}, computing $N_n^n(\mu,\lam)$ is the same as computing $(G_{n-1}^\mu)^n$. We will do so inductively.

When $n=1,2$, the result can be easily verified by drawing appropriate diagrams as in Figure \ref{fig:extensions_comp}.

We will provide all the details to compute $N^n_n(\mu,\lam)$ when $a\leq -1$. A similar approach is used when $a\geq1$, so we will not provide all the details for that case. 

The computation when $a\leq -1$ is split into two parts, $2n < 1-a$ and $2n \geq 1-a$, the latter being a slight augmentation of the former, capturing the behavior of $\A$ near weights with $a=-1$. We compute $N^n_n(\mu,\lam)$ assuming $2n<1-a$ by induction on $n$ with base case $n=3$ computed using the $n=2$ case as above.

In the following, we will ignore the degree shifting in the definition of $(\P_n^\mu)^n$ when discussing morphisms in $\ker\phi_{n-1}^\mu$ for simplicity. So, we are looking for $(G_{n-1}^\mu)^n$, a homogenous basis of degree one morphisms in $\ker\phi_{n-1}^\mu$.

Let $f',g'\in\A$ have target $\lam=\mu+m\eps+n\del$ (which uniquely identifies them), and let $\sigma^{n,m}, \tau^{n,m}\in\C$ be coefficients of $f',g'$ such that $\sigma^{n,m}f'+\tau^{n,m}g'\in\ker\phi_{n-1}^\mu$. If there exist non-zero choices of $\sig,\tau$ then the corresponding weight $\lam$ makes $N^n_n(\mu,\lam)\neq0$.

\begin{figure}[H]
\minipage{0.5\textwidth}
\adjustbox{scale=.45,center}{
       \begin{tikzcd}
    &      & b+1\phantom{-1} & b+2  &   b+3              \\
    & a+6 & & & \bullet \\
    & a +4      &  &      \circ \arrow[ur] \arrow[dr]  &                   \\
    &a+2   & \arrow[ur, dotted]   \arrow[dr,dotted]   &     &     \bullet       \\
    &a   &    &  \circ \arrow[ur]  \arrow[dr]   &             \\
    &a-2  &  \arrow[ur,dotted] \arrow[dr,dotted] &     &          \bullet  \\
    &a-4  &  &    \circ \arrow[ur] \arrow[dr]   &    \\
    & a-6 & & & \bullet\\
\end{tikzcd}
    }
\caption{$n=3$, $f',g'$ relation}
\label{fix:n=3, f'g'}
\endminipage\hfill
\minipage{0.5\textwidth}%
\adjustbox{scale=.45,center, }{
\begin{tikzcd}
&    & b\phantom{+n-3}  & b+1\phantom{+n} & b+2\phantom{+n}          & b+3             \\
    &-a+2  & &          &   \bullet &          \\
    &-a  & &          \circ  \arrow[ur] \arrow[dr]      &          &           \\
    &-a-2   & \phantom{\circ}   \arrow[ur, dotted]  \arrow[dr, dotted] &   &  \bullet   &              \\
    &-a-4  & &  \circ \arrow[ur]\arrow[dr] &        &               \\
    &-a-6  & \phantom{\circ}    &   &    \bullet     &           \\
    &  \vdots   & &  &         &                         \\
    & \vdots    & &  &         &                      \\
    & a +4   &  &  &    \circ \arrow[uuu, bend right=15]  &                      \\
    &a+2  &  & \phantom{\circ}   \arrow[dr, dotted] \arrow[ur,dotted] \arrow[uuuuu,bend right=15, dotted]  &          &  \phantom{\circ}           \\
    &a &  &    &  \circ \arrow[uuuuuuu,bend right=15] &                  \\
    &a-2 & &  \phantom{\circ}   \arrow[ur, dotted] \arrow[uuuuuuuuu,bend right=15, dotted] \arrow[dr, dotted] &    \phantom{\circ}    &            \\
    &a-4 & &  &    \circ \arrow[uuuuuuuuuuu, bend right = 15]    & 
\end{tikzcd}
}
\hspace{1pt}
\caption{$n=3$, $q,f,g$ relation}
\label{fig:n=3, q,f,g}
\endminipage
\end{figure}

For $n=3$, we enumerate all choices (up to scalar) of nonzero $\sig,\tau$ (see Figure \ref{fix:n=3, f'g'} for visual reference),
\[\sig^{3,6} = 0, \ \tau^{3,6}=1,\]
\[\sig^{3,2} = 1, \ \tau^{3,2}=\left(\frac{a+5}{a+1}\right)^2,\]
\[\sig^{3,-2} = \left(\frac{a-1}{a+3}\right)^2, \ \tau^{3,-2}=\left(\frac{a+1}{a-3}\right)^2,
\]
\[\sig^{3,-6} = 1, \ \tau^{3,-6}=0,\]

Now, let $q,f,g\in\A$ have target $\lam=\mu'+m\eps+(n-1)\del$ with $m$ integer, and let $\alpha^{n,m},\beta^{n,m}, \gamma^{n,m}\in\C$ be such that
$\alpha^{n,m}q + \beta^{n,m}f + \gamma^{n,m}g\in\ker\phi_{n-1}^\mu$. When $n=3$ (see Figure \ref{fig:n=3, q,f,g}), the only non-zero choices (up to scalar) are

\[\alpha^{3,4}=1, \ \beta^{3,4}=(a-3), \ \gamma^{3,4}=0\]
\[\alpha^{3,0}=1, \ \beta^{3,0}=(a+1), \ \gamma^{3,0}=(a+1)\left(\frac{a+3}{a-1}\right)^2\]
\[\alpha^{3,-4}=1, \ \beta^{3,-4}=0, \ \gamma^{3,-4}=(a+5)\]

It is quick to check that these specified linear combinations of $f,g, f',g',q$ span the degree one morphisms in $\ker\phi_2^\mu$ and they are clearly linearly independent. Hence, we set $(G_{2}^\mu)^3$ equal to this basis and note that this agrees with the $a\leq -1$ part of the theorem.

\begin{rmk}\label{rmk:linear_combs}
By induction, with base case $n=3$, and the relations of $\A$, one can verify that the only possible choices of $\lam$ that make $N^n_n(\mu,\lam)\neq0$ are $\lam=\mu+m_1\eps+n\del$ where $m_1\in M_n$ (corresponding to a linear combination of $f',g'$) and $\lam=\mu+m_2\eps+(n-1)\del$ where $m_2\in M_{m-1}$ (corresponding to a linear combination of $q,f,g$), when $2n<1-a$ and $n\geq3$. We now show that there is exactly one nonzero choice (up to scalar) of coefficients such that $\sig^{n,m_1}f'+\tau^{n,m_1}g'\in\ker\phi_{n-1}^\mu$ and $\alpha^{n,m_2}q+\beta^{n,m_2}f+\gamma^{n,m_2}g$ for all $n,m_1,m_2$. In particular, these linear combiations span the degree $n$ morphisms in $\ker\phi_{n-1}^\mu$ and are linearly independent, so together they form $(G_{n-1}^\mu)^n$. These linear combinations correspond to the dotted lines in Figures \ref{fig:n>3, f'g'} and \ref{fig:n>3, q,f,g}.
\end{rmk}

\begin{figure}[H]
\minipage{0.5\textwidth}
\adjustbox{scale=.45,center}{
    \begin{tikzcd}
    &    & b+n-2  & b+n-1 & b+n                    \\
    & a +4+m_1   &  \phantom{\circ}   \arrow[dr, dotted]&  &      &                      \\
    &a+2+m_1  &  & \circ   \arrow[dr, "f'"]   &                 \\
    &a+m_1 & \phantom{\circ}   \arrow[ur, dotted]\arrow[dr, dotted] &    &  \bullet                   \\
    &a-2+m_1 & &  \circ  \arrow[ur, "g'", swap]  &                 \\
    &a-4+m_1 & \phantom{\circ}   \arrow[ur, dotted]&  &              
\end{tikzcd}
    }
\caption{$n>3$, $f',g'$ relation}
\label{fig:n>3, f'g'}
\endminipage\hfill
\minipage{0.5\textwidth}%
\adjustbox{scale=.45,center, }{
    \begin{tikzcd}
    &    & b+n-3  & b+n-2 & b+n-1          & b+n               \\
    &-a+2 + m_2  & \phantom{\circ}   \arrow[dr, dotted] &          &    &          \\
    &-a+m_2  & &          \circ \arrow[dr, "g"]      &          &           \\
    &-a-2+m_2   & \phantom{\circ}   \arrow[ur, dotted]  \arrow[dr, dotted] &   &  \bullet   &              \\
    &-a-4+m_2  & &  \circ \arrow[ur, "f", swap] &        &               \\
    &-a-6+m_2  & \phantom{\circ}   \arrow[ur, dotted] &   &         &           \\
    &  \vdots   & &  &         &                         \\
    & \vdots    & &  &         &                      \\
    & a +4-m_2  &  &  &      &                      \\
    &a+2-m_2  &  & \phantom{\circ}   \arrow[dr, dotted] \arrow[uuuuu,bend right=15, dotted]  &          &  \phantom{\circ}           \\
    &a-m_2 &  &    &  \circ \arrow[uuuuuuu,bend right=15, "q", swap] &                  \\
    &a-2-m_2 & &  \phantom{\circ}   \arrow[ur, dotted] \arrow[uuuuuuuuu,bend right=15, dotted] &    \phantom{\circ}    &            \\
    &a-4-m_2 & &  &        &             
\end{tikzcd}
}
\hspace{1pt}
\caption{$n>3$, $q,f,g$ relation}
\label{fig:n>3, q,f,g}
\endminipage
\end{figure}

\begin{rmk}
Below, all ``unique'' choices of $\sig,\tau,\alpha,\beta,\gamma$ are unique up to scalar and nonzero. Moreover, any coefficients with vanishing denominator can be verified to fall outside of the assumption that $2n<1-a$. We will see that their corresponding morphisms do not exist as an option in the $2n\geq 1-a$ case, and so the term does not create a contradiction.
\end{rmk}

We proceed by induction for $n>3$, assuming that the identity for $N^n_n(\mu,\lam)$ holds in the $n-1$ case. In particular, we assume $(G_{n-2}^\mu)^{n-1}$ is as described in Remark \ref{rmk:linear_combs}, where there exists unique choices of $\sig,\tau,\alpha,\beta,\gamma$ for all $m_1\in M_{n-1},m_2\in M_{n-2}$. 

Let $f',g'\in\P_{n-1}^\mu$ have target $\lam=\mu+m_1\eps+n\del$ for $m\in M_n$. By induction, the only choices of $\sig,\tau$ when $m_1=\pm2n$ and $n\geq3$, are
$\sigma^{n,2n}=\tau^{n,-2n}=0$ and $\sigma^{n,-2n}=\tau^{n,2n}=1$. We now find $\sig,\tau$ for $m_1\in M_{n}\setminus\{\pm2n\}$, corresponding to the $f',g'$ relation marked in Figure \ref{fig:n>3, f'g'}.

For $\sigma^{n,m_1} f+\tau^{n,m_1} g\in\ker\phi_{n-1}^\mu$, we must have (by induction)
\[\sigma^{n,m_1}\tau^{n-1,m_1+2}f'g'+\tau^{n,m_1}\sigma^{n-1,m_1-2}g'f'=0,\] which, by considering the relations between $f',g'\in\A$, is satisfied by non-zero $\sig,\tau$ if and only if (up to scalar) \[\sigma^{n,m_1}=\frac{1}{\tau^{n-1,m_1+2}}, \ \ \tau^{n,m_1} = \frac{1}{\sigma^{n-1,m_1-2}}\left(\frac{a+m_1+3}{a+m_1-1}\right)^2\]
Hence, $N^n_n(\mu,\lam)=1$ for all $\lam=\mu+m_1\eps+n\del$ with $m_1\in M_n$ when $2n<1-a$.

Now let $q,f,g\in\P_{n-1}^\mu$ have target $\lam=\mu+m_2\eps+(n-1)\del$ where $m_2\in M_{n-1}$. For $2n<1-a$, there is exactly one nonzero choice of $\alpha,\beta,\gamma$ for each case $m_2=\pm2(n-1)$, as listed below
\[\alpha^{n, 2(n-1)}=\alpha^{n,-2(n-1)}=1, \ \ \beta^{n,-2(n-1)}=\gamma^{n,2(n-1)}=0,\]
\[\beta^{n,2(n-1)}=(a-2n+3), \ \ \gamma ^{n,-2(n-1)}=(a+2n-1).\]

Suppose now $m_2\in M_{n-1}\setminus\{\pm2(n-1)\}$ and $n>3$ (with $2n<1-a$). We find unique $\alpha,\beta,\gamma$ for $q,f,g$ depicted in Figure \ref{fig:n>3, q,f,g}.

By induction, $\alpha^{n,m_2}q+\beta^{n,m_2}f+\gamma^{n,m_2}g\in\ker\phi_{n-1}^\mu$ if and only if 
\begin{align*}
    &\alpha^{n,m_2}q\left( \sigma^{n-1,-m_2}f'+\tau^{n-1,-m_2}g' \right) + \\[0.5em]
    &\beta^{n,m_2}f\left(\alpha^{n-1,m_2-2}q+\beta^{n-1,m_2-2}f'+\gamma^{n-1, m_2-2}g\right) + \\[0.5em]
    &\gamma^{n,m_2}g\left(\alpha^{n-1,m_2+2}q+\beta^{n-1,m_2+2}f'+\gamma^{n-1, m_2+2}g\right) = 0.
\end{align*}

Re-grouping in terms of corresponding relations in $\A$, we see that this linear combination is $0$ if and only if
\begin{align*}
    &(1) \ \ \ \ \beta^{n,m_2}\alpha^{n-1,m_2-2}=(a-m_2+1)\alpha^{n,m_2}\sigma^{n-1,-m_2},\\[0.5em]
    &(2) \ \ \ \ \gamma^{n,m_2}\alpha^{n-1,m_2+2}=(a-m_2+1)\alpha^{n,m_2}\tau^{n-1,-m_2},\\[0.5em]
    &(3) \ \ \ \ \gamma^{n,m_2}\beta^{n-1,m_2+2}=\left(\frac{a-m_2+3}{a-m_2-1}\right)\beta^{n,m_2}\gamma^{n-1,m_2-2}.
\end{align*}

Since, in the base case, the only choice (up to scalar) case has $\alpha\neq0$ and both $\beta,\gamma$ are not simultaneously zero, the above relations imply that $\alpha\neq0$ for all permissible $n,m_2$. In particular, we can assume (without loss of generality, up to scalar) that $\alpha=1$ for all such $n,m_2$.   

From earlier in this proof, we know that $\sigma^{n-1,-m_2}=\frac{1}{\tau^{n-2,-m_2+2}}$, $\tau^{n-1,-m_2} = \frac{1}{\sigma^{n-2,-m_2-2}}\left(\frac{a-m_2+3}{a-m_2-1}\right)^2$. Using these identities and our inductive assumption, we can eliminate $\sigma$ and $\tau$ from the first two equations to get the following two equations,
\begin{align*}
    &\beta^{n,m_2}\gamma^{n-1,m_2-2}=(a-m_2+1)(a-m_2+3),\\[0.5em]
    &\gamma^{n,m_2}\beta^{n-1,m_2+2}=\frac{(a-m_2+1)(a- m_2+3)^2}{(a-m_2-1)},
\end{align*}

which are consistent with equation (3). Hence, the only choice (up to scalar) is \[\alpha^{n,m_2}=1, \  \beta^{n,m_2}=(a-m_2+1)\sig^{n-1,-m_2}, \ \gamma^{n,m_2} = (a-m_2+1)\tau^{n-1,-m_2}\] for all $n \geq 3$ and $m_2\in M_{n-1}\setminus\{\pm2(n-1)\}$. 

The inductive argument described above can be amended slightly to work for all $n\geq 3$. Let us look at what happens when $2n\in\{1-a,3-a\}$, where the effects of the behavior of $\A$ near $a=-1$ begin to impact the computation. 

\begin{figure}[H]
\minipage{0.5\textwidth}
\adjustbox{scale=.45,center}{
       \begin{tikzcd}
    &    & b+n-3  & b+n-2 & b+n-1          & b+n               \\
     &\vdots  & \vdots      &  \vdots     &  \vdots &  \vdots        \\
    &9 & &   \circ \arrow[dr]   &       &            \\
    &7   &  \phantom{\circ} \arrow[dr, dotted]\arrow[ur,dotted]  &  &   \bullet  &              \\
    &5   &     & \circ\arrow[ur] \arrow[dr] &    &              \\
    &3  & \phantom{\circ} \arrow[dr, dotted] \arrow[ur, dotted]  &    &  \bullet      &              \\ 
    &1 & \phantom{\circ}   &  \circ \arrow[ur] &         &           \\
    & -1   &  &   & \circ\arrow[dr]     &                      \\
    & -3 &  &   \phantom{\circ}  \arrow[dr, dotted] \arrow[ur, dotted] \arrow[uu, bend right = 15, dotted]   &         &  \bullet          \\
    &-5 &  &    & \circ\arrow[ur] \arrow[dr] \arrow[uuuu, bend right=15]   &                  \\
    &-7  & &            \phantom{\circ}  \arrow[dr, dotted] \arrow[ur, dotted] \arrow[uuuuuu, bend right = 15, dotted]   &          &   \bullet        \\
    &-9  & &               &        \circ \arrow[ur]\arrow[uuuuuuuu, bend right=15]  &           \\
     &\vdots  & \vdots      &  \vdots \arrow[ur, dotted] \arrow[uuuuuuuuuu, bend right = 15, dotted]    &  \vdots &  \vdots        \\
    & & &  &        &            
\end{tikzcd}
    }
\caption{$2n=1-a$}
\label{fig:2n=1-a}
\endminipage\hfill
\minipage{0.5\textwidth}%
\adjustbox{scale=.45,center, }{
    \begin{tikzcd}
    &   & b+n-3 & b+n-2          & b+n-1     & b + n        \\
    &\vdots  & \vdots      &  \vdots    &  \vdots &  \vdots        \\
    &\vdots  & \vdots      &  \vdots    &  \vdots &  \vdots        \\
    &7   &  &   \circ   \arrow[dr] &          &    \\
    &5        & \phantom{\circ}  \arrow[ur, dotted] \arrow[dr, dotted] &    &  \bullet  &          \\
    &3  &    &  \circ   \arrow[ur] \arrow[dr, color=red]  &       &       \\ 
    &1    & \phantom{\circ}  \arrow[ur, dotted] &         &   {\color{red}\mathord{?}}  &      \\
    & -1    &   & \phantom{\circ} \arrow[dr, dotted]     &    & \bullet                 \\
    & -3  &   \phantom{\circ}  &         &  \circ  \arrow[ur] \arrow[dr]  &    \\
    &-5   &    & \phantom{\circ}  \arrow[ur, dotted] \arrow[dr, dotted] \arrow[uuuu, bend right=15, dotted]   &         &  \bullet       \\
    &-7  &            \phantom{\circ}   &          &   \circ \arrow[ur]\arrow[uuuuuu, bend right = 15]   &     \\
    &-9   &               &    \phantom{\circ} \arrow[ur, dotted] \arrow[uuuuuuuu, bend right = 15, dotted]    &      &     \\
     &\vdots  & \vdots      &  \vdots     &  \vdots &  \vdots        \\
    &  &  &        &         &
\end{tikzcd}
}
\hspace{1pt}
\caption{$2n=3-a$}
\label{fig:2n=3-a}
\endminipage
\end{figure}

For $\sig,\tau$: Looking at the $2n=1-a$ case in Figure \ref{fig:2n=1-a}, there does not exist a degree one morphism $-\eps+(b+n-1)\del\mapsto\eps+(b+n)\del$, which is why we have the condition $m_1<1-a$ in the identity for $N^n_n(\mu,\lam)$. Looking then at the $2n=3-a$ case in Figure \ref{fig:2n=3-a}, the $g'f'=0$ relation in the local picture at $a=-1$ is the reason why $m_1=-1-a$ remains a choice that makes $N^n_n(\mu,\lam)\neq 0$. Evidently, this pattern continues near $a=-1$ for higher values of $n$. The edge case choice of $\sig,\tau$ for $m_{\max}:=\max\{m_1\in M_n | m_1 < 1-a\}$ is $\sig^{n,m_{\max}}=1,\tau^{n,m_{\max}}=\left(-\frac{1}{16}\right)$ or $\sig^{n,m_{\max}}=0,\tau^{n,m_{\max}}=1$, depending on if $a+m_{\max}=-3$ or $a+m_{\max}=-1$.

For $\alpha,\beta,\gamma$: Looking at the $2n=1-a$ case in Figure \ref{fig:2n=1-a}, we see that $\lam = \mu'-2(n-1)\eps+(n-1)\del$ makes $N^n_n(\mu,\lam)=0$ because there is no degree one morphism $\eps+c\del\mapsto -\eps+(c+1)\del$ for any $c\in\Z$. Using this, we can then look at the $2n=3-a$ case in Figure \ref{fig:2n=3-a} where it is easy to check that $m_2=-2(n-1)$ and $m_2=-2(n-1)+4$ makes $N^n_n(\mu,\lam)=0$. This pattern clearly continues for larger values of $n$, where all choices of $m_2\leq 2(n+a)$ make $N^n_n(\mu,\lam)=0$. We choose values of $\beta,\gamma$ for $m=m_{\min}:=\min\{m_2\in M_{n-1} | m_2 > 2(n+a)\}$ using the same relations found when $2n<1-a$.

Note that the above choices of $\sig,\tau,\alpha,\beta,\gamma$ for $m=m_{\min},m_{\max}$ are unique up scalar.

Evidently, for all values of $m_1$ and $m_2$ satisfying $-2n\leq m_1<m_{\mathrm{max}}$ and $m_{\mathrm{min}}<m_2\leq 2(n-1)$, the same relations between the coefficients found in the $2n<1-a$ case hold for when $2n\geq1-a$. Hence, the result for $a\leq-1$. 

 To compute the extension groups when $a\geq 1$, the same style of argument can be applied. The relevant linear combination of morphisms, coefficients, and relations between them are provided below.

Fix $\mu = a\eps+b\del\in\O_{odd}$, where $a\geq1$. Let 
$\hat{\sig},\hat{\tau},\hat{\alpha},\hat{\beta},\hat{\gamma}, u, v,w\in\C$ be such that 
\begin{align*}
    \hat{\sig}^{n,m}f+\hat{\tau}^{n,m}g\in\ker\phi_{n-1}\\
    \hat{\alpha}^{n,-m}p+\hat{\beta}^{n,-m}f' + \hat{\gamma}^{n,-m}g'\in\ker\phi_{n-1}\\
    u^{n,m}q+v^{n,m}f+w^{n,m}g\in\ker\phi_{n-1}
\end{align*}

The non-zero choices of coefficients satisfy, without loss of generality (up to scalar), $\alpha,u=1$ for all permissible $n,m$. Moreover,
\[\hat{\sig}^{n,m}=\frac{1}{\hat{\tau}^{n-1,m-2}}, \ \ \hat{\tau}^{n,m}=\left(\frac{a+m-1}{a+m+3}\right)\frac{1}{\hat{\sig}^{n-1,m+2}},\]
and
\begin{align*}
    &v^{n,m}=\left(\frac{a+m+1}{a+m-1}\right)\sig^{n-2,m}=-(a+m+1)\hat{\beta}^{n-1,-m},\\
    &w^{n,m}=\left(\frac{a+m+1}{a+m+3}\right)\tau^{n-2,m}=-(a+m+1)\hat{\gamma}^{n-1,-m}.
\end{align*}

Finally, we have the following useful identities 
\begin{align*}
    \hat{\gamma}^{n,-m}\hat{\beta}^{n-1,-m-2}&=\left(\frac{a+m-1}{a+m+3}\right)^2\hat{\beta}^{n,-m}\hat{\gamma}^{n-1,-m+2},\\
    w^{n,m}v^{n-1,m+2}&=\left(\frac{a+m-1}{a+m+3}\right)v^{n,m}w^{n-1,m-2}.
\end{align*}

\end{proof}

\begin{theorem}\label{thm:koszul}
    The category $\O_{odd}$ is Koszul. 
\end{theorem}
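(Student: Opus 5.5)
By Lemma \ref{lem:technical_koszul}, it suffices to show that for every $\mu\in\O_{odd}$ the minimal graded projective resolution (\ref{eq:full_reso_mu}) coincides with the linear complex (\ref{eq:tighter_reso_mu}). Equivalently, for each $n\geq 1$ the kernel $\ker\phi_n^\mu$ should be generated in the single degree $n+1$, so that $G_n^\mu=(G_n^\mu)^{n+1}$. I would establish this by a dimension count, using the explicit generators found in the proof of Theorem \ref{thrm:extn}. By Lemma \ref{lem:on_reso}(3), the complex (\ref{eq:tighter_reso_mu}) is a complex, and its terms $(\P_n^\mu)^n$ are known from the multiplicities $N^n_n(\mu,\lam)$. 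Its first step agrees with the resolution, since $\P(\mu)_{>0}$ is generated by degree-one elements ($\A$ is generated in degree one by Theorem \ref{thm:relations_in_A}). It is therefore enough to prove that (\ref{eq:tighter_reso_mu}) is exact in every positive homological degree. Once this holds, $\ker(\phi_n^\mu)^n=\operatorname{im}(\phi_{n+1}^\mu)^{n+1}$ is generated by $(G_n^\mu)^{n+1}$, and induction on $n$ shows that each $\P_n^\mu$ equals $(\P_n^\mu)^n$.

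To prove exactness, I would argue by induction on $n$ and compare graded dimensions. The complex is linear, so I would fix a target idempotent $e_\lam$ and an internal degree $d$ and work with the finite-dimensional spaces $e_\lam(\P_n^\mu)^n_d$. These are nonzero only for $d-n\in\{0,\dots,4\}$, because $\A=\bigoplus_{i=0}^4\A_i$ and each $\P(\nu)$ has the explicit basis from Lemma \ref{lem:linear_basis_of_A}. Exactness is equivalent to the Euler characteristic identity
\[\sum_{n}(-1)^n\dim e_\lam(\P_n^\mu)^n_d=\delta_{d,0}\delta_{\lam,\mu},\]
together with injectivity of the inclusions $\operatorname{im}\subseteq\ker$ at each step. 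In practice I would instead show directly that $\dim\ker(\phi_n^\mu)^n$ in degree $n+1+k$ (for $k=0,\dots,3$) equals the dimension of the span of $\A_k\cdot(G_n^\mu)^{n+1}$. The degree $n+1$ case is exactly the computation already carried out in Theorem \ref{thrm:extn}. The higher-degree cases reduce to checking, in each local picture (Figures \ref{fig:local_pic_ageq3}--\ref{fig:local_pic_aeqm1}), that the products of the chosen linear combinations $\sigma f'+\tau g'$ and $\alpha q+\beta f+\gamma g$ (or the hatted versions when $a\geq 1$) with the degree-one generators account for every element of the kernel. For this I would use the normal forms of Lemma \ref{lem:linear_basis_of_A} and the commutation relations $fg\sim gf$, $qf'\sim fq$, and so on.

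An alternative that may be cleaner is to verify the numerical Koszul criterion: the Hilbert series identity $H(\A,t)\,H(\A^!,-t)=1$, where $\A^!$ is the quadratic dual. Theorem \ref{thrm:extn} gives $\dim(\A^!)_n$ between each pair of idempotents as $N^n_n(\mu,\lam)$. Since this identity alone does not imply Koszulity, I would combine it with the fact that the complex (\ref{eq:tighter_reso_mu}) is the Koszul complex: if the complex is exact in low degrees, then equality of Hilbert series forces exactness everywhere. I would therefore run the count inductively, using the matrix identity $\sum_n (-1)^n N^n_n\cdot \dim e_\cdot\A e_\nu$ computed from Proposition \ref{prop:dim_counts}.

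I expect the main obstacle to be the region near $a=\pm1$, namely when $2n\in\{1-a,3-a\}$ or the corresponding boundary when $a\geq 1$. There the local pictures degenerate: $g$ and $gq$ disappear at $a=1$, and at $a=-1$ the relation $g'f'=0$ holds. As a result, the generic cancellation pattern in the alternating sum fails, and the truncations $M_n^{<1-a}$ and $M_{n-1}^{>2(n+a)}$ must be matched term by term against the missing basis elements. I would handle this first by checking small cases explicitly (for example $\mu=-\eps+b\del$, $\mu=-3\eps+b\del$ and $\mu=\eps+b\del$) and then inducting away from the boundary.
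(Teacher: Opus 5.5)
Your main route is the paper's own. After reducing via Lemma \ref{lem:technical_koszul}, you induct on $n$ and check, target by target in the local pictures, that every element of $\ker\phi_n^\mu$ in degrees $n+2,n+3,n+4$ lies in the $\A$-span of the degree-$(n+1)$ combinations $\sigma f'+\tau g'$ and $\alpha q+\beta f+\gamma g$ (and their hatted analogues). This is exactly what the paper does in Tables \ref{tab:aleq-1} and \ref{tab:ageq1}: it gives the single linear condition for each combination to lie in the kernel, together with an explicit expression of that combination through the generators.

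You should discard the Euler-characteristic/Hilbert-series alternative, and your claim that exactness is ``equivalent'' to the Euler-characteristic identity. The identity $H(\A,t)H(\A^!,-t)=1$ is necessary but not sufficient. Since $\A_2,\A_3,\A_4\neq 0$, each internal degree of the linear complex has several homology positions that induction does not settle; for example, positions $n+1$ and $n$ in internal degree $n+3$ are both unknown when you treat position $n$. The vanishing of an alternating sum cannot force each of these terms to vanish.
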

\begin{proof}   
    By Lemma \ref{lem:technical_koszul}, we equivalently prove that $(G_n^\mu)^{n+1}$ (as computed in Theorem \ref{thrm:extn}) generates $\ker\phi_n^\mu$ for all $n\in\N$ and $\mu\in\O_{odd}$.
    
    The case for $n=1$ is trivial and $n=2$ is satisfied because $\A$ is quadratic. In what follows, when referring to the degree of a morphism we will ignore shifts (for simplicity).

     Let $\mu=a\eps+b\del\in\O_{odd}$. Inducting on $n>2$, we enumerate a graded basis of possible target morphisms in $\ker\phi_{n-1}^\mu$ (with degree greater than one) in Tables \ref{tab:aleq-1} and \ref{tab:ageq1} (depending on the nature of $a$). We use the inductive assumption that $\ker\phi_{n-2}^\mu$ is generated by $(G_{n-2}^\mu)^{n-1}$ and the graded basis of $\A$ defined in Lemma \ref{lem:linear_basis_of_A}. Noting that when $2n\geq 1-a$, the permissible values of $m$ lie in appropriate subsets of $M_n$ (of a similar form to the conditions on $m$ in the identities for $N^n_n(\mu,\lam)$ in Theorem \ref{thrm:extn}).
    
\begin{rmk}
    For the min and max values of $m$ in these appropriate subsets of $M_{n}$, certain elements in the combination column of Tables \ref{tab:aleq-1} and \ref{tab:ageq1} may not be present. For example, when $m=2(n-1)$ and $2n<1-a$, $g'p$ does not exist in the combination involving $pq, g'p, f'p$. We interpret this as setting its coefficient to zero in the conditions and relations outlined below. Further, one can verify that denominators in the below relations vanish if and only if the corresponding morphism is not included in the linear combination.
\end{rmk}

    For each degree and target in Tables \ref{tab:aleq-1} and \ref{tab:ageq1}, we provide the necessarily and sufficient conditions on the coefficients for the linear combination to lie in $\ker\phi_{n-1}^\mu$, and then explicitly provide how it is generated by $(G_{n-1}^\mu)^n$.

 \begin{table}[H]
    \centering
    \caption{Combinations for $a\leq-1$}
    \label{tab:aleq-1}
        \begin{tabular}{c|c|c}
            degree & target & combination   \\
            \hline
            $2$ & $\mu+m\eps+(n+1)\del, \ m\in M_{n-1}$ & $g'f'$  \\
            \hline
            $2$ & $\mu'+m\eps+n\del, \ m\in M_{n-2}$ & $gf,qf',qg'$  \\
            \hline
            $2$ & $\mu+m\eps+(n-1)\del, \ m\in M_{n-1}$ & $pq,g'p,f'p$  \\
            \hline 
            $3$ & $\mu'+m\eps+(n+1)\del, \ m\in M_{n-1}$ & $gfq$  \\
            \hline 
            $3$ & $\mu+m\eps+n\del, \ m\in M_{n-2}$ & $g'f'p,pqf',g'pq$  \\
            \hline 
            $4$ & $\mu+m\eps+(n+1)\del, \ m\in M_{n-1}$ & $g'pqf'$  \\
        \end{tabular}
    \end{table}

    The combinations in Tables \ref{tab:aleq-1} and \ref{tab:ageq1} involving a single element are easily verified to be in $\ker\phi_{n-1}^\mu$ and generated by $(G_{n-1}^\mu)^n$. For example, $g'f'\in\ker\phi_{n-1}^\mu$ because $f'^2=g'^2=0$, and moreover $g'f'=g'(\sig f' + \tau g')/\sig$ (for appropriate $\sig,\tau$).

    Let $A,B,C\in\C$, we list the coefficient conditions and generating identities for the remaining combinations in Table \ref{tab:aleq-1}. 

\begin{enumerate}
    \item It can be checked directly that $Ag'f'p+Bpqf'+Cg'pq\in\ker\phi_{n-1}^\mu$ if and only if \[A=B\tau^{n-1,m+2}\left(\frac{a+m+3}{a+m-1}\right)^2-\frac{C\sig^{n-1,m-2}(a+m-1)}{a+m+1},\] in which case
        \begin{align*}
            \frac{A(a+m-1)(a+m+1)}{\beta^{n,-m+2}}&pg(q+\beta^{n,-m+2}f+\gamma^{n,-m+2}g)+\\
            \frac{B}{\sig^{n,m}}pq(\sig^{n,m}f'&+\tau^{n,m}g')=Ag'f'p+Bpqf'+Cg'pq.
        \end{align*}

        \item  It can be checked directly that $Apq+Bg'p+Cf'p\in\ker\phi_{n-1}^\mu$ if and only if 
    \begin{align*}
        &B=-\frac{A\beta^{n,-m}}{a+m+3}, \  C=-\frac{A\gamma^{n,-m}}{a+m-1},
    \end{align*}
    in which case 
    \[Ap(q+\beta^{n,-m}f+\gamma^{n,-m}g)=Apq+Bf'p+Cg'p.\]

    \item  It can be checked directly that $Agf+Bqf'+Cqg'\in\ker\phi_{n-1}^\mu$ if and only if \[A=(a-m+1)(a-m-1)\left(B\tau^{n-1,-m+2}\left(\frac{a-m+3}{a-m-1}\right)^2-C\sig^{n-1,-m-2}\right),\] in which case
    \begin{align*}
    \frac{B}{\sig^{n,-m}}q(\sig^{n,-m}f'+\tau^{n,-m}g')&+ \frac{A}{\beta^{n,m+2}}g(q+\beta^{n,m+2}f+\gamma^{n,m+2}g)\\
    &=Agf+Bqf'+Cqg'.
    \end{align*}
\end{enumerate}

    We now provide the same results for $a\geq 1$, noting the same caveat on permissible values of $m$.
    
     \begin{table}[H]
        \centering
        \caption{Combinations for $a\geq1$}
            \begin{tabular}{c|c|c}
                    \label{tab:ageq1}
                degree & target & combination   \\
                \hline
                $2$ & $\mu+m\eps+(n+1)\del, \ m\in M_{n-1}$ & $gf$  \\
                \hline
                $2$ & $\mu'+m\eps+n\del, \ m\in M_{n-2}$ & $g'f', f'p, g'p$  \\
                \hline
                $2$ & $\mu'+m\eps+(n-2)\del, \ m\in M_{n-2}$ & $pq, f'p, g'p$  \\
                \hline
                $2$ & $\mu+m\eps+(n-1)\del, \ m\in M_{n-3}$ & $gf, qf', qg'$  \\
                \hline
                $3$ & $\mu'+m\eps+(n+1)\del, \ m\in M_{n-1}$ & $g'f'p$  \\
                \hline
                $3$ & $\mu+m\eps+n\del, \ m\in M_{n-2}$ & $gfq$  \\
                \hline
                $3$ & $\mu'+m\eps+(n-1)\del, \ m\in M_{n-3}$ & $g'f'p,pqf',g'pq$  \\
                \hline
                 $4$ & $\mu+m\eps+n\del, \ m\in M_{n-2}$ & $g'pqf'$  \\
            \end{tabular}
        \end{table}
    

        Once again, let $A,B,C\in\C$.

    \begin{enumerate}
        \item We have, $Ag'f'+Bf'p+Cg'p\in\ker\phi_{n-1}^\mu$ if and only if \[A=\frac{1}{a-m+1}\left(B\hat{\tau}^{n-1,-m-2}\left(\frac{a-m-1}{a-m+3}\right)^2-C\hat{\sig}^{n-1,-m+2}\right),\]
        in which case 
    
        \begin{align*}
            \frac{A}{\hat{\beta}^{n,m-2}}g'(p+&\hat{\beta}^{n,m-2}f'+\hat{\gamma}^{n,m-2}g') +\\
            &\frac{(a-m-1)B}{\hat{\sig}^{n,-m}}(\hat{\sig}^{n,-m}f+\hat{\tau}^{n,-m}g) = Ag'f'+Bf'p+Cg'p.
        \end{align*}
    
        \item We have, $Apq+Bf'p+Cg'p\in\ker\phi_{n-1}^\mu$ if and only if 
        \[B=\frac{Av^{n,-m}}{a-m-1}, \ \ C=\frac{Aw^{n,-m}}{a-m+3},\]
        in which case \[Ap(q+v^{n,-m}f+w^{n,-m}g)=Apq+Bf'p+Cg'p.\]
    
        \item  We have, $Agf+Bqf'+Cqg'\in\ker\phi_{n-1}^\mu$ if and only if \[A=(a+m+1)(a+m+3)\left(\left(\frac{a+m-1}{a+m+3}\right)^2B\hat{\gamma}^{n-1,-m+2}-C\hat{\beta}^{n-1,-m-2}\right),\] in which case 
    
        \begin{align*}
            \frac{A}{v^{n,m+2}}g(q&+v^{n,m+2}f+w^{n,m+2}g) + \\
            &\frac{B}{\hat{\beta}^{n,-m}}q(p+\hat{\beta}^{n,-m}f'+\hat{\gamma}^{n,-m}g') =Agf+Bqf'+Cqg'.
        \end{align*}
    
        \item 
    
        Finally, $Ag'f'p+Bpqf'+Cg'pq\in\ker\phi_{n-1}^\mu$ if and only if \[A=\left(\frac{a-m-1}{a-m+3}\right)^2B\hat{\gamma}^{n-1,m+2}-\left(\frac{a-m+3}{a-m+1}\right)C\hat{\beta}^{n-1,m-2},\]
        in which case 
    
        \begin{align*}
        &\frac{A(a-m+1)(a-m+3)}{v^{n,-m+2}}pg(q+v^{n,-m+2}f'+w^{n,-m+2}g) + \\
            &\frac{B}{\hat{\beta}^{n,m}}pq(p+\hat{\beta}^{n,m}f'+\hat{\gamma}^{n,m}g') = Ag'f'p+Bpqf'+Cg'pq.
        \end{align*}
       
    \end{enumerate}
    
\end{proof}

\begin{cor}
    We have $\mathrm{dim}\ext^n_{\O_{odd}}(L(\mu),L(\lam))=N^n_n(\mu,\lam)$, as described in Theorem \ref{thrm:extn}.
\end{cor}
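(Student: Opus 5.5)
The plan is to pass through the equivalence between $\O_{odd}$ and graded modules over the Koszul algebra $\A$, and then read the $\ext$ dimensions off the linear resolution.

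\textbf{Step 1 (Morita equivalence).} Since $\A=\bigoplus_{\mu,\lam}\Hom_\O(P(\mu),P(\lam))$ is the endomorphism algebra of a projective generator of $\O_{odd}$, the functor $\bigoplus_\lam\Hom_\O(P(\lam),-)$ gives an equivalence between $\O_{odd}$ and the category of (locally finite) ungraded $\A$-modules, in the sense of \cite{Mazorchuk_Ovsienko_Stroppel}. Under this equivalence $P(\lam)\mapsto\P(\lam)$ and $L(\lam)\mapsto\C e_\lam$. Hence
\[\ext^n_{\O_{odd}}(L(\mu),L(\lam))\cong\mathrm{Ext}^n_{\A}(\C e_\mu,\C e_\lam).\]

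\textbf{Step 2 (graded versus ungraded).} Since $\A$ is positively graded with semisimple $\A_0$ and the $\C e_\lam$ are gradable, I will use the standard decomposition
\[\mathrm{Ext}^n_\A(\C e_\mu,\C e_\lam)\cong\bigoplus_{j\in\Z}\mathrm{ext}^n(\C e_\mu,\C e_\lam\left<j\right>).\]
This holds because $\P(\lam)$ is finitely generated, so ungraded $\Hom$ out of a graded projective resolution splits as a sum over grading shifts. To compute the right-hand side, apply $\Hom(-,\C e_\lam\left<j\right>)$ to the minimal graded resolution \eqref{eq:full_reso_mu}. Minimality means the image of each $\phi_n^\mu$ lies in the radical, so all differentials of the resulting Hom complex vanish. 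The cohomology in degree $n$ therefore has dimension equal to the multiplicity of $\P(\lam)\left<j\right>$ in $\P_n^\mu$.

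\textbf{Step 3 (Koszulity concentrates on the diagonal).} By Theorem \ref{thm:koszul} and Lemma \ref{lem:technical_koszul}, the resolution \eqref{eq:full_reso_mu} coincides with \eqref{eq:tighter_reso_mu}. So $\P_n^\mu$ contains summands $\P(\lam)\left<j\right>$ only for $j=n$, with multiplicity $N^n_n(\mu,\lam)$. Thus only $j=n$ contributes. Lemma \ref{lem:dim_comps} then gives
\[\mathrm{dim}\,\mathrm{ext}^n(\C e_\mu,\C e_\lam\left<n\right>)=\mathrm{dim}\,H^n(\C e_\mu,\C e_\lam\left<n\right>)=N^n_n(\mu,\lam).\]
Combining Steps 1–3 yields the corollary, with the explicit values taken from Theorem \ref{thrm:extn}.

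\textbf{Main obstacle.} The substantive point is Step 2 in the infinite (locally unital) setting. One must check that the graded lift of the projective cover and the minimal resolution are also a projective resolution in the ungraded category. This follows from finite generation of each $\P(\lam)$ and from each $\P_n^\mu$ being a finite sum, since $N^n_n(\mu,\lam)$ is nonzero for only finitely many $\lam$. With that checked, the rest is bookkeeping.
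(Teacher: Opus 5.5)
Your Steps 2 and 3 are the paper's argument: split the ungraded $\mathrm{Ext}$ into graded pieces, use Koszulity to kill every $j\neq n$, then apply Lemma \ref{lem:dim_comps}. Your justifications there (minimality, finite generation) are fine.

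The gap is in Step 1, and the paper's one-line proof makes the same identification without comment. In $\A$ the product is composition ($gf$ means $f$ first). So $\P(\lam)=\A e_\lam$ is spanned by morphisms with \emph{source} $\lam$, and the resolution \eqref{eq:full_reso_mu} builds $\P^\mu_1$ from the targets of arrows leaving $\mu$. But $\bigoplus_\nu\Hom_\O(P(\nu),-)$ makes $\Hom_\O(P,M)$ a \emph{right} $\A$-module by precomposition, and it sends $P(\lam)$ to $e_\lam\A$, the morphisms with \emph{target} $\lam$. Finite-dimensional left $\A$-modules therefore model $\O_{odd}^{\mathrm{op}}$, not $\O_{odd}$.

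Because the quiver is not symmetric, this matters. Take $\mu=a\eps+b\del$ with $a\geq 1$ and $\lam=\mu+2\eps+\del$. Then $N^1_1(\mu,\lam)=1$, coming from the arrow $f$. However, $\mathrm{Ext}^1_{\O_{odd}}(L(\mu),L(\lam))$ is a quotient of $\Hom_\g(\mathrm{rad}\,P(\mu),L(\lam))$, and this is zero. Indeed, every weight of $P(\mu)=U(\g_{-1})\otimes P_0(\mu)$ has $\del$-coefficient at most $b$, while $\lam$ has $\del$-coefficient $b+1$. (Conversely, $f$ shows $\mathrm{Ext}^1_{\O_{odd}}(L(\lam),L(\mu))\neq 0$.) So your claim $P(\lam)\mapsto\P(\lam)$ is false as written, and the displayed formula only holds with $\mu$ and $\lam$ interchanged.

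The repair is short.
\begin{itemize}
\item $\O_{odd}$ is equivalent to finite-dimensional right $\A$-modules, with $L(\nu)$ going to the simple right module $S_\nu$.
\item $\Hom_\C(-,\C)$ is an anti-equivalence from finite-dimensional right to finite-dimensional left $\A$-modules, sending $S_\nu$ to $\C e_\nu$. This works because $\A e_\nu$ is finite-dimensional by Lemma \ref{lem:linear_basis_of_A}, and $e_\nu\A$ is finite-dimensional since $P(\nu)$ has finite length.
\item Hence $\mathrm{Ext}^n_{\O_{odd}}(L(\lam),L(\mu))\cong\mathrm{Ext}^n_{\A}(\C e_\mu,\C e_\lam)$, computed in left modules.
\end{itemize}
Your Steps 2 and 3 then give $\dim\mathrm{Ext}^n_{\O_{odd}}(L(\lam),L(\mu))=N^n_n(\mu,\lam)$. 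Theorem \ref{thm:koszul} is unaffected, since $\A$ is Koszul if and only if $\A^{\mathrm{op}}$ is.
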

\begin{proof}
    Since the equivalence of categories $\O_{odd}\to\A$-Mod identifies $L(\lam)$ to $\C e_\lam$, it follows that 
    \begin{align*}
        \mathrm{Ext}_{\O_{odd}}^n(L(\mu),L(\lam)) = \mathrm{Ext}^n(\C e_\mu,\C e_\lam) &=
        \bigoplus_{j\in\N}\mathrm{ext}^n(\C e_\mu,\C e_\lam\left<j\right>)\\
        &=\mathrm{ext}^n(\C e_\mu,\C e_\lam\left<n\right>)\\
        &=N^n_n(\mu,\lam).
        \end{align*}
\end{proof}

\bibliographystyle{alpha}
\bibliography{refs.bib}

\appendix
\section{Proof of Lemma \ref{lem:on_reso}}\label{ap:1}
We verify part (1) by induction. For $n=0$, the claim is clearly true. Proceeding by induction, because $\ker\phi_{n-1}^\mu$ is generated in degree greater than or equal to $n$, the idempotents in $\P_n^\mu$ have degree greater than or equal to $n$ by definition. This means that any $g\in\P_n^\mu$ has $\deg(g)\geq n$. But if $g\in\ker\phi_{n}^\mu$ and $\deg(g)=n$, then $g$ must be a linear combination of idempotents and minimality of the generating set $G_{n-1}$ would be contradicted.
    
    For part (2), it is sufficient to show that for all $e_\lam\in(\P_n^\mu)^n$ we have $\phi_n^\mu(e_\lam)\in(\P_{n-1}^\mu)^{n-1}$. Let $g_\lam:=\phi_n^\mu(e_\lam)$, then $\deg(g_\lam)=n$ because $\deg(e_\lam)=n$. From part (1), we know all idempotents in $\P_{n-1}^\mu$ have degree bigger than or equal to $n-1$. Moreover, $g_\lam\in\ker\phi_{n-1}^\mu$ cannot be a linear combination of idempotents. Hence, it must be that $g_\lam\in(\P_{n-1}^\mu)^{n-1}$ by degree consideration. 

    Part (3) follows because $\mathrm{im}\phi_{n+1}^\mu\subseteq\ker\phi_n^\mu$ and part (2).
\end{document}